\theoremstyle{plain}
\newtheorem{theorem}{Theorem}[section]
\newtheorem{lemma}[theorem]{Lemma}
\newtheorem{definition}[theorem]{Definition}
\newtheorem{corollary}[theorem]{Corollary}
\newtheorem{question}[theorem]{Question}
\newtheorem{remark}[theorem]{Remark}
\newcommand{\QQ}{\mathbb{Q}}
\newcommand{\RR}{\mathbb{R}}
\newcommand{\Prob}{\mathbb{Pr}}
\newcommand{\probnull}{\mathbb{nu}}
\newcommand{\problin}[1]{\mathbb{l}_{#1}}
\newcommand{\problins}{\mathbb{l}}
\newcommand{\linexts}[1]{\mathcal{L}_{#1}}
\newcommand{\prpolytope}[1]{\mathcal{Q}_{#1}}
\newcommand{\orderpolytope}[1]{\mathcal{O}({#1})}
\newcommand{\orderpolytopebounds}[1]{\widehat{\mathcal{O}}({#1})}
\newcommand{\arel}{\le_{E_+}}
\newcommand{\brel}{\le_{E_-}}
\newcommand{\erel}{\le_{E}}
\newcommand{\transerel}{\preceq_{E}}
\newcommand{\twoanti}[1]{\mathcal{A}({#1})}
\newcommand{\tarel}{\le_{A_+}}
\newcommand{\tbrel}{\le_{A_-}}
\newcommand{\terel}{\le_{A}}
\newcommand{\transterel}{\preceq_{A}}
\newcommand{\Pairs}[1]{\underline{#1}}
\newcommand{\Ppairs}{\Pairs{P}}
\newcommand{\PairsQuotient}[1]{\widehat{\Pairs{#1}}}
\newcommand{\Pqu}{\PairsQuotient{P}}
\newcommand{\pqurel}{\preceq}
\author{Jan Snellman\thanks{Department of Mathematics, Linköping University, 58183 Linköping, Sweden; jan.snellman@liu.se.}}
\date{2025-01-25}
\title{The Polytope of Probability Functions on a Finite Poset}
\begin{document}

\maketitle

\begin{abstract}
Kim, Kim, and Neggers \autocite{neggers2019probfunc} defined probability functions on a poset,
by listing some very natural conditions that a function \(\pi: P \times P \to [0,1]\)
should satisfy in order to capture the intuition of ``the likelihood that \(a\) precedes \(b\) in \(P\)''.
In particular, this generalizes the common notion of poset probability for finite posets, where
\(\pi(a,b)\) is the proportion of linear extensions of \(P\) in which \(a\) precedes \(b\).

They constructed a family of such functions for posets embedded in the ordered plane;
that is two say, for posets of order dimension at most two.

We study probability functions of a finite poset \(P\) by constructing an
ancillary poset \(\twoanti{P}\), that we call \textbf{probability functions posets}. The relations of this
new poset encodes the restrictions imposed on probability functions of the original poset by
the conditions of the definition. Then, we define the probability functions polytope \(\prpolytope{P}\),
which parameterizes the probability functions on \(P\), and show that it can be realized
as the \emph{order polytope} \(\orderpolytope{\twoanti{P}}\) of \(\twoanti{P}\) intersected by a certain affine subspace.

We give a partial description of the vertices of \(\prpolytope{P}\) and show that, in contrast to
the order polytope, it is not always a lattice polytope.
\end{abstract}
\section{Introduction}
\label{sec:org29ca3b5}
\subsection{Preliminaries}
\label{sec:org048367c}
A \emph{partially ordered set}, or \emph{poset} for short, is an ordered pair \((X,\le)\) where \(X\) is a set and
\(\le\) is reflexive, anti-symmetric, and transitive. Two elements \(x,y \in X\) are \emph{parallel},
written \(x \parallel y\), if neither \(x \le y\) nor \(y \le x\). On the other hand, if
\(x \le y\) or \(y \le x\) then \(x,y\) are \emph{comparable}. The \emph{diagonal} of the relation \(\le\)
is the set
\[\Delta = \{(x,x) \mid x \in X\},\]
and the \emph{strict part} of \(\le\) is
\[< \, = \, (\le \setminus \Delta);\]
in other words, \(x < y\) if and only if \(x \le y\) and \(x \neq y\).
We say that \(y\) \emph{covers} \(x\), written
\(x \lessdot y\), if \(x < y\) but there is no \(z \in X\) with \(x < z < y\).

If \((P,\le)\) and \((Q,\preceq)\) are two posets, then a map \[\phi: P \to Q\] is \emph{isotone}, or \emph{order-preserving},
if \(x \le y\) implies \(\phi(x) \preceq \phi(y)\). The dual poset \(Q^{op}\) has the same elements as \((Q,\preceq)\)
but the reverse ordering. A map \[\tau: P \to Q\] is \emph{antitone}, or \emph{order-reversing}, if
\(\tau: P \to Q^{op}\) is isotone. An isotone bijection with isotone inverse is called an \emph{isomorphism}.

When \((X,\le)\) is a poset,
a subset \(S \subseteq X\) is an \emph{order ideal} if
\[y \in S, x \in X, x \le y \quad  \implies \quad x \in S.\]
Dually, an \emph{order filter} \(T \subseteq X\) has the defining property that
\[y \in T, x \in X, y \le x \quad \implies \quad x \in T.\]
A subset \(A \subseteq X\) is an \emph{antichain} if
\[x,y \in A, x \neq y \quad \implies \quad x \parallel y. \]
A subset \(C \subseteq X\) is a \emph{chain} if \(x,y \in C\)  implies that \(x\) and \(y\) are comparable.
If \(Z \subseteq X\) then the \emph{induced subposet} is
\((Z,R)\) where \(a R b\) if and only if \(x \le y\).

The poset \((X,\le)\) is a \emph{linear order}, or \emph{total order},  if any two elements in \(X\) are
comparable (thus the induced subposet of a chain is a linear order). For a positive integer \(n\),
any two linear orders with \(n\) elements are isomorphic;
we denote this poset by \(C_{n}\). Similarly, any two \emph{antichain posets} with \(n\) elements
are isomorphic.

For a poset \((P,\le)\), a poset \((P, \preceq)\) such that \(\le \, \subseteq \, \preceq\) is called an \emph{extension}
of the former; in other words,
\[x \le y \quad \implies \quad x \preceq y.\]
An extension to a linear order is a \emph{linear extension}. This concept is mostly used
for finite posets \(P\). In this case, we denote then set of linear extensions of \(P\)
by \(\linexts{P}\), and the cardinality of this set by \(e(P)\).

If \((P,\le)\), \((Q, \preceq)\) are posets, we denote by \(P + Q\) their \emph{disjoint union}. The underlying set
is the disjoint union \(P \sqcup Q\), and the order relation is that \(x \le_{P + Q} y\) if
\(x,y \in P\) and \(x \le_{P} y\), or if
\(x,y \in Q\) and \(x \le_{Q} y\).
The \emph{ordinal sum} \(P \oplus Q\) has the same underlying set, but here \(x \le_{P \oplus Q} y\) if
\(x \le_{P + Q} y\) or if \(x \in P, y \in Q\).

A (closed) \emph{polyhedron} is the intersection of finitely many closed affine halfspaces
\[\{\vec{x} \mid \ell(\vec{x}) \ge c\} \subset \RR^{n}.\]
A \emph{polytope} is a bounded polyhedron. It is the
irredundant convex hull of finitely many points, its \emph{vertices}. The irredundant \emph{bounding hyperplanes}
cut out the \emph{facets} of the polytope. The \emph{dimension} of a polytope is the dimension of its
\emph{affine hull}.
\subsection{Poset probability and the 1/3-2/3 conjecture.}
\label{sec:orgd28a5a6}
For a finite poset \((P,\le)\), a much investigated notion of poset probability
is to define
\begin{equation}
\label{eq-sortingprob}
  \pi_{P}(x,y) = \frac{|\linexts{P}(x < y)|}{|\linexts{P}|}
\end{equation}
where \(\linexts{P}(x < y)\) denotes the set of linear extensions of \(P\) that place \(x\) before \(y\).

Study of this probability function has been driven by interest in the
 \textbf{1/3-2/3 conjecture}, proposed by Kislitsyn \autocite{Kislitsyn13} in 1968
(see also \autocite{Fredman13}\autocite{Linial13}\autocite{Brightwell1999}\autocite{olson20181}) which states that
every finite poset \(P\) which is not totally ordered has a \(1/3\)-balanced pair
\((x,y)\) with respect to the sorting probability \(\pi_{P}\), meaning that
\(1/3 \le \pi_{P}(x,y) \le 2/3\); equivalently,
that \(\min(\pi_{P}(x,y), \pi_{P}(y,x)) \ge 1/3\).
Another way of expressing this is that
\begin{equation}
\label{eq-13maxmin}
  \max_{x,y \in P} \min (\Prob_{P}(x,y), \Prob_{P}(y,x))
  \ge \frac{1}{3}
\end{equation}
\subsection{The linear extensions polytope}
\label{sec:org7c0b6f8}
To a finite poset \((X,\le)\) one can associate its \emph{linear extension polytope}
\autocites{FacetsLinordpoly}[][]{FioriniLinextpolytope}
as the convex hull of all characteristic vectors of linear extensions of \(P\).
In more detail, put
\begin{equation}
\label{eq-linexA}
A = \{(i,j) \mid i,j \in X, i \neq j\}
\end{equation}
and for any linear extension \(L=(X,\preceq)\)  define the vector
\(\chi^{L}\) so that \(\chi_{{ij}}^{L}=1\) if \(i \prec j\) and \(\chi_{{ij}}^{L}=0\) otherwise.
Then
\[\mathbf{P}_{{\mathrm{LO}}}(P) = \mathrm{conv}(\{ \chi^{L} \mid L \in \linexts{P} \})\]
is the linear extension polytope of \(P\).

Schulz \autocite{SchulzPolytopes} showed that the affine hull of
\(\mathbf{P}_{{\mathrm{LO}}}(P)\) is defined by
\begin{align}
\label{eq-linex-eq}
  x_{ij} &= 1, \text{ whenever } i < j, \\
  x_{ij} &= 0, \text{ whenever } i > j, \\
  x_{ij} + x_{ji} &= 1, \text{ whenever } i \parallel j.
\end{align}
It follows that the dimension of \(\mathbf{P}_{{\mathrm{LO}}}(P)\) is equal to the number
of unordered pairs of incomparable elements in \(P\).

The probability functions polytope the we will define in this article will be somewhat similar to the linear extensions
polytope. We will not investigate the relationship between these two polytopes, but it seems
a promising avenue for further research.
\subsection{The order polytope}
\label{sec:orgdec1c22}
To a finite poset \((Q,\le)\) one can furthermore associate its \emph{order polytope} \(\orderpolytope{Q}\)
of order-preserving maps \(f: Q \to [0,1]\). The seminal article by Stanley \autocite{StanleyTwoPosetPolytopes}
lists some important properties of order polytopes, and the interplay between combinatorial
properties of \(Q\) and geometric properties of \(\orderpolytope{Q}\). There has since been considerably
developments in this field \autocite{HibiCutting}\autocite{HibiEdgesOrderpolytope}\autocite{vonBellTriangulationsOrderpolytopes}\autocite{AhmadOrderChain}\autocite{Freij-Hollanti_Lundström_2024}\autocite{HibiUnimodularOrderpolytopes}, including the study of double posets and double order polytopes \autocite{https://doi.org/10.37236/8381}\autocite{https://doi.org/10.1137/16m1091800}, and so-called
order-chain polytopes \autocite{https://doi.org/10.1016/j.ejc.2016.06.007}.

We will realize our probability functions polytope as the intersection of certain order polytopes with an affine subspace
cut out by the equations
\[  x_{ij} + x_{ji} = 1, \text{ whenever } i \parallel j,\]
similar to the equations (\ref{eq-linex-eq}) for the  linear extensions polytope.
\subsection{Outline of the article}
\label{sec:org693ca3b}
In section 2, we review Kim, Kim, and Neggers definition of probability functions on a poset.
We make some technical tweaks, like restricting the domain of definition to \((P \times P) \setminus \Delta\),
similar to what we saw in equation (\ref{eq-linexA}).

In section 3 we define the ``derived poset'' \((\Ppairs, \transerel)\), and its quotient
\((\Pqu, \pqurel)\) to capture the conditions on probability functions on \(P\) imposed by the
axioms of Definition \ref{def-probfunc}. Since the relation on parallel pairs \(x \parallel y\) is determined,
it is enough to specify the relations on \emph{ordered pairs of antichains}; to this end,
we define the poset \((\twoanti{P},\transterel)\) which encodes the same information as \((\Pqu, \pqurel)\).
We give an example that show why it is not feasible to reduce the size of the derived poset
further by considering \emph{unordered pairs of antichains}.

Section 4 introduces the main object of study for this article, the probability functions polytope \(\prpolytope{P}\)
which parameterizes all probability functions on \(P\). We first give a straightforward definition
by encoding a probability function on \(P\) as a square matrix; the set of these matrices are easily seen
to form a convex polytope. By considering only non-parallel pairs, and taking into account the relations
\begin{equation}
\label{eq-for-H}
\pi(x,y) + \pi(y,x)=1 \quad \forall x,y \in P, x \parallel y
\end{equation}
for such pairs, we reduce the embedding dimension, resulting in a more tractable object.

In fact, we show that the probability functions polytope \(\prpolytope{P}\) can be described as the intersection of the order polytope
\(\orderpolytope{\twoanti{P}}\) with the affine subspace \(H\) cut out by the equations
(\ref{eq-for-H}). Since the face-lattice structure of order polytopes is known
(from the work of Geissinger \autocite{geissingerpolytope}), and in particular there are explicit descriptions
of the vertices of order polytopes
\autocite{StanleyTwoPosetPolytopes}\autocite{HibiEdgesOrderpolytope}\autocite{HibiCutting}, we can describe
some of the vertices of \(\prpolytope{P}\). Somewhat surprisingly, we find an example where the vertices
of \(\prpolytope{P}\) are non-integral, showing that \(\prpolytope{P}\) need not be a lattice polytope.

This example that we found is for a poset \(P\) with 8 vertices. In the last section of this article,
we list the probability functions polytopes (or rather, their edge graphs and \(f\)-vectors)
for posets with three or four elements. All these probability functions polytopes are lattice polytopes, so it is possible
that the example found (\(P=C_{2} \times C_{2} \times C_{2}\)) is in fact the smallest counterexample. An exhaustive
search for posets with \(\le 7\) elements does not seem completely infeasible, but would require either
more clever programming
(we use the polytope functionality in Sagemath \autocite{sagemath} in a rather amateurish fashion)
or computer resources beyond what we currently have access to.
\section{Probability functions on a poset}
\label{sec:orgc60bae7}
\subsection{Definition}
\label{sec:org7a53a1b}
In \autocite{neggers2019probfunc}, Kim, Kim, and Neggers defined the notion of a probability function on a partially ordered set \((P,\le)\) as follows:

\begin{definition}
A function \(\pi: P \times P \to [0,1]\) is called a \textbf{probability function} on \(P\) if for all \(x,y \in P\),
\begin{enumerate}
\item \(\pi(x,x)=1\),
\item \(x \le y\) implies \(\pi(x,y)=1\),
\item \(x \neq y\) implies \(\pi(x,y) + \pi(y,x)=1\),
\item \(y < z\) implies \(\pi(x,y) \le \pi(x,z)\).
\end{enumerate}
\label{def-probfunc}
\end{definition}
The value \(\pi(x,y)\) should be thought of as \(\Prob_{\pi}(x \preceq y)\), the probability
that \(x\) precedes \(y\) (though \(x\) and \(y\) might be incomparable).

The authors proved:
\begin{lemma}
If \((P,\le\) is a poset, and \(x,y,z \in P\), then
\[
y < z \quad \implies \quad   \pi(z,x) \le \pi(y,x).
\]
\end{lemma}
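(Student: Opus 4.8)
The plan is to derive this ``dual monotonicity'' statement from the other axioms, since axiom 4 only gives monotonicity in the \emph{second} argument. The mechanism is to transfer monotonicity from the second coordinate to the first by way of the asymmetry relation $\pi(a,b)+\pi(b,a)=1$ of axiom 3: passing through $1-(\cdot)$ turns an increasing comparison into a decreasing one.

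First I would handle the generic case in which $x$ is distinct from both $y$ and $z$. Here axiom 3 applies to each of the pairs $(x,z)$ and $(x,y)$, giving $\pi(z,x)=1-\pi(x,z)$ and $\pi(y,x)=1-\pi(x,y)$. Since $y<z$, axiom 4 yields $\pi(x,y)\le\pi(x,z)$; subtracting each side from $1$ reverses the inequality and produces $\pi(z,x)\le\pi(y,x)$, which is exactly the claim.

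It then remains to dispose of the degenerate cases where $x$ coincides with $y$ or with $z$; note that $x=y=z$ cannot occur, since $y<z$ forces $y\ne z$. If $x=z$, the assertion reads $\pi(z,z)\le\pi(y,z)$, i.e.\ $1\le\pi(y,z)$, and $y<z$ gives $\pi(y,z)=1$ by axiom 2, so equality holds. If $x=y$, the assertion reads $\pi(z,y)\le\pi(y,y)=1$ (using axiom 1), which is automatic because $\pi$ takes values in $[0,1]$.

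The only mild subtlety — and the sole reason the proof is not a one-line computation — is that axiom 3 presupposes distinct arguments, so the boundary cases $x=y$ and $x=z$ fall outside the generic argument and must be verified separately. Each of them, however, collapses immediately to axiom 1 or axiom 2 together with the codomain constraint $\pi\le 1$, so no genuine obstacle arises.
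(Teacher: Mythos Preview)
Your proof is correct. The key idea---using axiom~3 to rewrite $\pi(z,x)=1-\pi(x,z)$ and $\pi(y,x)=1-\pi(x,y)$, then applying axiom~4 to $\pi(x,y)\le\pi(x,z)$ and reversing via $1-(\cdot)$---is exactly the intended mechanism. Your separate treatment of the degenerate cases $x=y$ and $x=z$ (where axiom~3 is unavailable) is appropriate and correctly handled via axioms~1, 2, and the codomain constraint.

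As for comparison: the paper does not actually supply a proof of this lemma. It simply records the statement and attributes the result to Kim, Kim, and Neggers \autocite{neggers2019probfunc}. Your argument is the standard one and is presumably what those authors had in mind; in any event there is nothing in the paper to compare against, and your write-up stands on its own.
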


Implicit in the same article is the following:
\begin{lemma}
Let \((P,\le)\) be a poset, and let \(\pi_{1},\dots,\pi_{n}\) be probability functions on \(P\).
Then any convex combination
\[
\pi = \sum_{j=1}^{n} c_{j}\pi_{j}, \qquad \sum_{j=1}^{n} c_{j} =1, \qquad \forall j: \, c_{j} \ge 0
\]
is a probability function on \(P\).
\label{lemma-isconvex}
\end{lemma}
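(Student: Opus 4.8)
The plan is to verify directly that $\pi$ satisfies each of the four defining conditions of Definition \ref{def-probfunc}, exploiting the fact that forming a convex combination is an affine operation applied pointwise to the values $\pi_j(x,y)$. First I would check that $\pi$ actually takes values in $[0,1]$, so that it is a legitimate candidate: for each pair $(x,y)$, the number $\pi(x,y) = \sum_j c_j \pi_j(x,y)$ is a convex combination of elements of $[0,1]$, and such a combination lies in $[0,1]$ precisely because the $c_j$ are nonnegative and sum to $1$.

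For the first three conditions, all of which are equalities, the key observation is that each is an affine constraint on the values of the $\pi_j$, and affine constraints are preserved under convex combination. Concretely, condition (1) follows from $\pi(x,x) = \sum_j c_j \pi_j(x,x) = \sum_j c_j = 1$; condition (2) follows in the same way, since $x \le y$ forces each $\pi_j(x,y)=1$ and hence $\pi(x,y) = \sum_j c_j = 1$; and condition (3) follows from the computation $\pi(x,y) + \pi(y,x) = \sum_j c_j\bigl(\pi_j(x,y)+\pi_j(y,x)\bigr) = \sum_j c_j = 1$ whenever $x \neq y$.

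The fourth condition is the only inequality, and it is here that the nonnegativity of the coefficients (rather than merely $\sum_j c_j = 1$) does the work. If $y < z$, then each probability function satisfies $\pi_j(x,y) \le \pi_j(x,z)$; multiplying each of these inequalities by the nonnegative scalar $c_j$ preserves its direction, and summing then yields $\pi(x,y) = \sum_j c_j \pi_j(x,y) \le \sum_j c_j \pi_j(x,z) = \pi(x,z)$.

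I do not expect any genuine obstacle: every step is a one-line manipulation, and the lemma amounts to the structural observation that the set of probability functions is cut out of the cube $[0,1]^{(P\times P)}$ by affine equalities and linear inequalities, hence is convex. The only point worth flagging explicitly is the complementary role of the two halves of the convexity hypothesis — the normalization $\sum_j c_j = 1$ is what keeps the equalities (1)--(3) intact, while the nonnegativity $c_j \ge 0$ is what preserves both the range condition and the inequality (4).
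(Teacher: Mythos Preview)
Your proposal is correct and follows essentially the same approach as the paper: a direct verification of each of the four conditions in Definition~\ref{def-probfunc} via the same one-line computations. You additionally check explicitly that $\pi$ takes values in $[0,1]$, which the paper omits, and you add a helpful remark on which part of the convexity hypothesis is doing the work in each step, but the substance is identical.
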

\begin{proof}
Take \(x,y,z \in P\). Then
\[\pi(x,x) = \sum_{j=1}^{n}c_{j}\pi_{j}(x,x) = \sum_{j=1}^{n}c_{j}\cdot 1 = 1.
\]
If \(x \le y\) then
\[
\pi(x,y) = \sum_{j=1}^{n}c_{j}\pi_{j}(x,y) = \sum_{j=1}^{n}c_{j} = 1.
\]
If \(x \neq y\) then
\begin{align*}
\pi(x,y) + \pi(y,t)
& = \sum_{j=1}^{n}c_{j}\pi_{j}(x,y) + \sum_{j=1}^{n}c_{j}\pi_{j}(y,x) \\
& = \sum_{j=1}^{n}c_{j}\bigl(\pi_{j}(x,y) + \pi_{j}(y,x) \bigr) \\
& = \sum_{j=1}^{n}c_{j}\cdot 1 \\
& = 1.
\end{align*}
If \(y < z\) then
\[
\pi(x,y) = \sum_{j=1}^{n}c_{j}\pi_{j}(x,y)  \le \sum_{j=1}^{n}c_{j}\pi_{j}(x,z) = \pi(x,z).
\]
\end{proof}
\subsection{Important probability functions}
\label{sec:orga859451}
Kim, Kim, and Neggers  \autocite{neggers2019probfunc} defined the following three important probability functions.
\begin{definition}
Let \((P,\le)\) be a finite poset with \(|P|=n\), and let \(x,y \in P\).
Let \(\linexts{P}\) be the set of linear (total) extensions of \(P\),
i.e. the set of all bijections \(\ell: P \to [n] = \{1,2,\dots,n\}\) that are order-preserving,
and let \(\ell \in \linexts{P}\) is some total extension.
Define
\begin{align}
\label{eq-3probfuncs}
  \probnull(x,y) & =
                   \begin{cases}
                     1 & x \le y, \\
                     1/2 & x \parallel y, \\
                     0 & x > y,
                   \end{cases} \\
\problin{\ell}(x,y) &=
   \begin{cases}
     1 & \text{ if } \ell(x) < \ell(y), \\
     1 & \text{ if  } x = y, \\
     0 & \text{ otherwise}
   \end{cases} \\
\problins(x,y) & = \frac{1}{|\linexts{P}|} \sum_{\ell \in \linexts{P}} \problin{\ell}(x,y)
\end{align}
\end{definition}
\subsection{Reduced probability functions}
\label{sec:orgdf55db8}
Since for any function on a poset \(P\), and any \(x \in P\), it holds that \(\pi(x,x)=1\),
this we make the following technical change to the definition:
\begin{definition}
Let \((P, \le)\) be a poset. Denote by \(\Delta = \{(x,y) \in P \times P \, \mid \, x \neq y\}\),
and put  \(\Ppairs = (P \times P \setminus \Delta)\).
A \textbf{reduced} probability function on \(P\) is a function \(\pi: \Ppairs \to [0,1]\)
satisfying
\begin{enumerate}
\item \(x \le y\) implies \(\pi(x,y)=1\),
\item \(x \neq y\) implies \(\pi(x,y) + \pi(y,x)=1\),
\item \(y < z\) implies \(\pi(x,y) \le \pi(x,z)\),
\item \(y < z\) implies \(\pi(z,x) \le \pi(y,x)\).
\end{enumerate}
\label{def-probfunc-reduced}
\end{definition}
Note that Lemma (\ref{lemma-isconvex}) still holds for reduced probability functions, indeed the two concepts are interchangeable; for our purposes, though, the modified definition has some technical benefits that will
become apparent.
We will, in what follows, refer to reduced probability functions simply as probability functions.
\section{The probability functions poset of a poset}
\label{sec:org4b3e4ab}
\subsection{Relations on all non-equal pairs}
\label{sec:orgad4a4f5}
\begin{definition}
Let \((P,\le)\) be a finite poset. Suppose that \(P\) is not a chain.
Define  binary relations on \(\Ppairs\) by
\begin{equation}
\label{eq-erel}
  \begin{split}
    (x,y) & \arel (u,v) \iff x = u \text{ and } y \le v \\
    (x,y) & \brel (u,v) \iff y = v \text{ and } u \le x \\
    (x,y) & \erel (u,v) \iff (x,y) \arel (u,v) \text{ or } (x,y) \brel (u,v)
  \end{split}
\end{equation}
\end{definition}

\begin{theorem}
The transitive closure of \(\erel\) is a poset.
\label{punc-poset}
\end{theorem}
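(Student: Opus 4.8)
The plan is to verify the three defining properties of a partial order for $\transerel$, the transitive closure of $\erel$. Transitivity is free, since a transitive closure is transitive by construction. Reflexivity is almost immediate: for any pair $(x,y) \in \Ppairs$ we have $(x,y) \arel (x,y)$ because $x = x$ and $y \le y$, so $\erel$ is already reflexive and hence so is its transitive closure. All the content of the statement therefore lies in antisymmetry, i.e.\ in showing that the directed graph whose arcs are the non-trivial $\erel$-steps contains no directed cycle.

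To rule out such cycles I would introduce a \emph{potential function} that is weakly monotone along $\erel$ and strictly monotone along every non-reflexive step. Fix any strictly order-preserving map $h \colon P \to \RR$ (for a finite poset one may take the position function of a fixed linear extension, so that $a < b$ implies $h(a) < h(b)$), and set
\[
  \phi(x,y) = h(y) - h(x), \qquad (x,y) \in \Ppairs .
\]
A direct computation then shows that $\phi$ respects both generating relations: if $(x,y) \arel (u,v)$ then $x = u$ and $y \le v$, so $\phi(u,v) - \phi(x,y) = h(v) - h(y) \ge 0$; and if $(x,y) \brel (u,v)$ then $y = v$ and $u \le x$, so $\phi(u,v) - \phi(x,y) = h(x) - h(u) \ge 0$. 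Because $h$ is \emph{strictly} order-preserving, each increment vanishes exactly when the step is trivial, namely when $y = v$ (forcing $(x,y) = (u,v)$) in the first case and when $u = x$ (again forcing $(x,y) = (u,v)$) in the second.

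With $\phi$ in hand, antisymmetry follows by a telescoping argument. If $(x,y) \transerel (u,v)$, realized by a chain $(x,y) = p_0 \erel p_1 \erel \cdots \erel p_k = (u,v)$, then summing the non-negative increments gives $\phi(x,y) \le \phi(u,v)$, and equality forces every individual increment to vanish, hence every step $p_i \erel p_{i+1}$ to be trivial, so that $(x,y) = (u,v)$. Applying this to a pair of relations $(x,y) \transerel (u,v)$ and $(u,v) \transerel (x,y)$ yields $\phi(x,y) = \phi(u,v)$, whence $(x,y) = (u,v)$, which is precisely antisymmetry.

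The only genuine obstacle is this antisymmetry step, and the point I would keep in mind is that the "equality forces triviality" bookkeeping is exactly where the \emph{strictness} of $h$ is essential; this is where the write-up should spend its care. I also note that the argument is robust: it uses nothing about $P$ beyond the existence of a strictly order-preserving real-valued function, so the standing hypothesis that $P$ is not a chain plays no role in this proof and is merely inherited from the surrounding definition.
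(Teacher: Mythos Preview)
Your proof is correct and close in spirit to the paper's, but packaged differently. The paper argues antisymmetry directly from the coordinates: along any chain of $\erel$-steps the second coordinate weakly increases (strictly at each genuine $\arel$-step) while the first coordinate weakly decreases (strictly at each genuine $\brel$-step), so a chain of non-trivial steps cannot return to its starting pair. You instead fold both monotone quantities into a single potential $\phi(x,y) = h(y) - h(x)$ via an auxiliary strictly order-preserving $h \colon P \to \RR$. This buys you a clean telescoping argument with a single scalar, at the modest price of invoking the existence of such an $h$ (equivalently, a linear extension of the finite poset $P$); the paper's version avoids that auxiliary construction but pays with a small case distinction. Neither approach needs the standing hypothesis that $P$ is not a chain, as you correctly note.
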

\begin{proof}

Reflexivity: \((x,y) \arel (x,y)\) since \(x=x\) and \(y \le y\).

No directed cycles: suppose that
\begin{equation}
\label{eq-erelchain}
(x_{1},y_{1}) \erel (x_{1},y_{1}) \erel \cdots \erel (x_{n},y_{n})
\end{equation}
and that each comparison is strict, i.e.
\[(x_{i},y_{i}) \neq (x_{i+1},y_{i+1}).\]
If
\[(x_{i},y_{i}) \arel (x_{i+1},y_{i+1})\] then \(y_{i} < y_{i+1}\), and if
\[(x_{i},y_{i}) \brel (x_{i+1},y_{i+1})\] then \(x_{i+1} < x_{i}\). Hence either (or both) \(x_{n} < x_{1}\) or \(y_{n} > y_{1}\), so
\[(x_{1},y_{1}) \neq (x_{n},y_{n}).\] Thus (\ref{eq-erelchain}) is no directed cycle.
\end{proof}

\begin{definition}
We denote by \((\Ppairs, \transerel)\) the poset which is the transitive closure of \(\erel\).
\end{definition}

\begin{lemma}
The map
\begin{equation}
\label{isotone}
  \begin{split}
    \tau: \Ppairs & \to \Ppairs \\
    \tau((x,y)) & = (y,x)
  \end{split}
\end{equation}
is an antitone involution on the poset \((\Ppairs, \transerel)\).
\end{lemma}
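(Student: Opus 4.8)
The plan is to separate the two assertions, that $\tau$ is an involution and that it reverses $\transerel$, and to prove the second by first understanding how $\tau$ acts on a single generating step. First I would note that $\tau$ is well defined as a self-map of $\Ppairs$: if $x \neq y$ then $y \neq x$, so $(y,x)$ again lies in $\Ppairs$. The involution property is then immediate, since applying the coordinate swap twice returns $(x,y)$; thus $\tau \circ \tau$ is the identity and in particular $\tau$ is a bijection equal to its own inverse.

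The heart of the argument is the observation that $\tau$ interchanges the two generating relations $\arel$ and $\brel$ while reversing their direction. I would isolate the two implications
\[
(x,y) \arel (u,v) \implies \tau((u,v)) \brel \tau((x,y)), \qquad
(x,y) \brel (u,v) \implies \tau((u,v)) \arel \tau((x,y)),
\]
each a short check against the defining coordinate conditions in (\ref{eq-erel}). From $x = u$ and $y \le v$ one reads off that the second coordinates of $(v,u)$ and $(y,x)$ agree and that $y \le v$, which is precisely $(v,u) \brel (y,x)$; symmetrically, $y = v$ and $u \le x$ yield $(v,u) \arel (y,x)$. Combining the two cases gives the single-step reversal
\[
(x,y) \erel (u,v) \implies \tau((u,v)) \erel \tau((x,y)).
\]

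It then remains to lift this to the transitive closure. Given $(x,y) \transerel (u,v)$, there is a finite chain $(x,y) = (a_{0},b_{0}) \erel \cdots \erel (a_{n},b_{n}) = (u,v)$; applying the single-step reversal to each link and reading the result from right to left produces an $\erel$-chain from $\tau((u,v))$ to $\tau((x,y))$, whence $\tau((u,v)) \transerel \tau((x,y))$. Since $\tau$ is an involution this makes it an antitone involution, as claimed. I do not expect a genuine obstacle here; the only place requiring care is the single-step bookkeeping in the second paragraph, namely tracking which coordinate is held fixed and which inequality flips when the swap is applied, after which the transitive-closure step is the routine observation that reversing a chain of generating steps reverses the derived relation.
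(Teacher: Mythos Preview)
Your proposal is correct and follows essentially the same approach as the paper: verify that $\tau$ is an involution, check that a single $\arel$- or $\brel$-step is sent by $\tau$ to a step of the other type in the reverse direction, and then pass to the transitive closure. Your write-up is slightly more explicit (you spell out the chain argument for the transitive closure and note well-definedness on $\Ppairs$), but the underlying argument is identical.
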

\begin{proof}
Clearly \(\tau \circ \tau\) is the identity.
If \((x,y) \arel (u,v)\) then \[(v,u) \brel (y,x),\]
and if \((x,y) \brel (u,v)\) then \[(v,u) \arel (y,x).\]
Thus if \((x,y) \erel (u,v)\) then
\[\tau((u,v)) \erel \tau((x,y)).\] So by transitivity, if
\((x,y) \transerel (u,v)\) then
\[\tau((u,v)) \transerel \tau((x,y)).\]
\end{proof}

\begin{mexample}
Take \(P= C_{2} \times C_{2}\) and as the disjoint sum \(C_{2} + C_{2}\), then
 \((\Ppairs, \transerel)\)  are as follows:
\begin{center}\begin{tabular}{c|c|c|c}
\(P\) & \(\Ppairs\) & \(P\) & \(\Ppairs\) \\
\resizebox{!}{3cm}{
 \begin{tikzpicture}[>=latex,line join=bevel,]
\node (node_0) at (20.811bp,6.5307bp) [draw,draw=none] {$1$};
  \node (node_1) at (5.8112bp,55.592bp) [draw,draw=none] {$2$};
  \node (node_2) at (35.811bp,55.592bp) [draw,draw=none] {$3$};
  \node (node_3) at (20.811bp,104.65bp) [draw,draw=none] {$4$};
  \draw [black,->] (node_0) ..controls (17.124bp,19.099bp) and (13.812bp,29.49bp)  .. (node_1);
  \draw [black,->] (node_0) ..controls (24.499bp,19.099bp) and (27.81bp,29.49bp)  .. (node_2);
  \draw [black,->] (node_1) ..controls (9.4985bp,68.161bp) and (12.81bp,78.552bp)  .. (node_3);
  \draw [black,->] (node_2) ..controls (32.124bp,68.161bp) and (28.812bp,78.552bp)  .. (node_3);
\end{tikzpicture}
}

   &
\resizebox{!}{3cm}{
 \begin{tikzpicture}[>=latex,line join=bevel,]
\node (node_0) at (84.39bp,8.3018bp) [draw,draw=none] {$\left(4, 1\right)$};
  \node (node_1) at (14.39bp,60.906bp) [draw,draw=none] {$\left(4, 2\right)$};
  \node (node_2) at (108.39bp,60.906bp) [draw,draw=none] {$\left(4, 3\right)$};
  \node (node_3) at (61.39bp,60.906bp) [draw,draw=none] {$\left(3, 1\right)$};
  \node (node_6) at (155.39bp,60.906bp) [draw,draw=none] {$\left(2, 1\right)$};
  \node (node_4) at (43.39bp,113.51bp) [draw,draw=none] {$\left(3, 2\right)$};
  \node (node_7) at (125.39bp,113.51bp) [draw,draw=none] {$\left(2, 3\right)$};
  \node (node_5) at (20.39bp,166.11bp) [draw,draw=none] {$\left(3, 4\right)$};
  \node (node_10) at (67.39bp,166.11bp) [draw,draw=none] {$\left(1, 2\right)$};
  \node (node_11) at (90.39bp,218.72bp) [draw,draw=none] {$\left(1, 4\right)$};
  \node (node_8) at (114.39bp,166.11bp) [draw,draw=none] {$\left(2, 4\right)$};
  \node (node_9) at (161.39bp,166.11bp) [draw,draw=none] {$\left(1, 3\right)$};
  \draw [black,->] (node_0) ..controls (63.757bp,24.218bp) and (46.772bp,36.496bp)  .. (node_1);
  \draw [black,->] (node_0) ..controls (90.951bp,23.136bp) and (95.962bp,33.7bp)  .. (node_2);
  \draw [black,->] (node_0) ..controls (78.102bp,23.136bp) and (73.301bp,33.7bp)  .. (node_3);
  \draw [black,->] (node_0) ..controls (105.32bp,24.218bp) and (122.55bp,36.496bp)  .. (node_6);
  \draw [black,->] (node_1) ..controls (22.361bp,75.815bp) and (28.507bp,86.539bp)  .. (node_4);
  \draw [black,->] (node_2) ..controls (112.99bp,75.589bp) and (116.43bp,85.836bp)  .. (node_7);
  \draw [black,->] (node_3) ..controls (56.523bp,75.589bp) and (52.878bp,85.836bp)  .. (node_4);
  \draw [black,->] (node_4) ..controls (37.102bp,128.34bp) and (32.301bp,138.91bp)  .. (node_5);
  \draw [black,->] (node_4) ..controls (49.951bp,128.34bp) and (54.962bp,138.91bp)  .. (node_10);
  \draw [black,->] (node_5) ..controls (41.023bp,182.03bp) and (58.008bp,194.31bp)  .. (node_11);
  \draw [black,->] (node_6) ..controls (147.1bp,75.891bp) and (140.65bp,86.775bp)  .. (node_7);
  \draw [black,->] (node_7) ..controls (122.43bp,128.12bp) and (120.24bp,138.21bp)  .. (node_8);
  \draw [black,->] (node_7) ..controls (135.39bp,128.57bp) and (143.25bp,139.62bp)  .. (node_9);
  \draw [black,->] (node_8) ..controls (107.83bp,180.95bp) and (102.82bp,191.51bp)  .. (node_11);
  \draw [black,->] (node_9) ..controls (140.46bp,182.03bp) and (123.23bp,194.31bp)  .. (node_11);
  \draw [black,->] (node_10) ..controls (73.678bp,180.95bp) and (78.479bp,191.51bp)  .. (node_11);
\end{tikzpicture}
}

   &
\resizebox{!}{2.0cm}{
 \begin{tikzpicture}[>=latex,line join=bevel,]
\node (node_0) at (5.8112bp,6.5307bp) [draw,draw=none] {$3$};
  \node (node_1) at (5.8112bp,55.592bp) [draw,draw=none] {$4$};
  \node (node_2) at (35.811bp,6.5307bp) [draw,draw=none] {$1$};
  \node (node_3) at (35.811bp,55.592bp) [draw,draw=none] {$2$};
  \draw [black,->] (node_0) ..controls (5.8112bp,19.029bp) and (5.8112bp,29.252bp)  .. (node_1);
  \draw [black,->] (node_2) ..controls (35.811bp,19.029bp) and (35.811bp,29.252bp)  .. (node_3);
\end{tikzpicture}
}

   &
\resizebox{!}{2.0cm}{
 \begin{tikzpicture}[>=latex,line join=bevel,]
\node (node_0) at (14.39bp,8.3018bp) [draw,draw=none] {$\left(4, 3\right)$};
  \node (node_1) at (61.39bp,8.3018bp) [draw,draw=none] {$\left(4, 1\right)$};
  \node (node_2) at (55.39bp,60.906bp) [draw,draw=none] {$\left(4, 2\right)$};
  \node (node_4) at (102.39bp,60.906bp) [draw,draw=none] {$\left(3, 1\right)$};
  \node (node_5) at (79.39bp,113.51bp) [draw,draw=none] {$\left(3, 2\right)$};
  \node (node_3) at (131.39bp,8.3018bp) [draw,draw=none] {$\left(3, 4\right)$};
  \node (node_6) at (190.39bp,8.3018bp) [draw,draw=none] {$\left(2, 3\right)$};
  \node (node_7) at (149.39bp,60.906bp) [draw,draw=none] {$\left(2, 4\right)$};
  \node (node_9) at (196.39bp,60.906bp) [draw,draw=none] {$\left(1, 3\right)$};
  \node (node_10) at (172.39bp,113.51bp) [draw,draw=none] {$\left(1, 4\right)$};
  \node (node_8) at (237.39bp,8.3018bp) [draw,draw=none] {$\left(2, 1\right)$};
  \node (node_11) at (284.39bp,8.3018bp) [draw,draw=none] {$\left(1, 2\right)$};
  \draw [black,->] (node_1) ..controls (59.786bp,22.834bp) and (58.608bp,32.769bp)  .. (node_2);
  \draw [black,->] (node_1) ..controls (72.905bp,23.514bp) and (82.119bp,34.886bp)  .. (node_4);
  \draw [black,->] (node_2) ..controls (61.951bp,75.74bp) and (66.962bp,86.304bp)  .. (node_5);
  \draw [black,->] (node_4) ..controls (96.102bp,75.74bp) and (91.301bp,86.304bp)  .. (node_5);
  \draw [black,->] (node_6) ..controls (178.88bp,23.514bp) and (169.66bp,34.886bp)  .. (node_7);
  \draw [black,->] (node_6) ..controls (191.99bp,22.834bp) and (193.17bp,32.769bp)  .. (node_9);
  \draw [black,->] (node_7) ..controls (155.68bp,75.74bp) and (160.48bp,86.304bp)  .. (node_10);
  \draw [black,->] (node_9) ..controls (189.83bp,75.74bp) and (184.82bp,86.304bp)  .. (node_10);
\end{tikzpicture}
}

\end{tabular}\end{center}
\end{mexample}
\subsubsection{Chains and antichains}
\label{sec:org472a157}
\begin{lemma}
If \(P\) is an antichain with \(n\) elements, then \((\Ppairs, \transerel)\)
is an antichain with \(n(n-1)\) elements.
\end{lemma}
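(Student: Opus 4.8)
The plan is to show that in the antichain case both generating relations $\arel$ and $\brel$ collapse to equality of pairs, so that $\erel$ — and therefore its transitive closure $\transerel$ — relates no two distinct elements of $\Ppairs$.

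First I would dispose of the cardinality claim: $\Ppairs$ is by definition the set of all ordered pairs $(x,y)$ of distinct elements of $P$, and since $|P| = n$ there are exactly $n(n-1)$ such pairs, so $|\Ppairs| = n(n-1)$ regardless of the order structure on $P$.

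The heart of the argument is the observation that on an antichain the relation $\le$ is just equality: $x \le y$ holds if and only if $x = y$. Feeding this into the definition, $(x,y) \arel (u,v)$ requires $x = u$ together with $y \le v$, and the latter now forces $y = v$; hence $(x,y) \arel (u,v)$ can only occur when $(x,y) = (u,v)$. The symmetric computation for $\brel$ (requiring $y = v$ and $u \le x$, so $u = x$) gives the same conclusion. Consequently $\erel$ is precisely the identity relation on $\Ppairs$.

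The only point deserving an explicit word — and the closest thing here to an obstacle — is confirming that passing to the transitive closure introduces nothing new. This is immediate in the present situation: since $\erel$ already consists solely of the reflexive pairs, there are no nontrivial comparisons to chain together, so $\transerel$ coincides with $\erel$ and remains the identity relation. Hence no two distinct elements of $\Ppairs$ are comparable under $\transerel$, and $(\Ppairs, \transerel)$ is an antichain on $n(n-1)$ elements, as claimed.
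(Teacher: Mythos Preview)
Your proof is correct and follows essentially the same approach as the paper: you observe that on an antichain the underlying order $\le$ is trivial, so the generating relations $\arel$ and $\brel$ (and hence $\erel$ and its transitive closure) reduce to equality, leaving $\Ppairs$ an antichain of the stated size. Your version is simply more explicit than the paper's one-line proof.
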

\begin{proof}
In this case, the relations \(\arel,\brel\), and hence \(\erel\) and \(\transerel\) are all trivial,
so only idential elements are related in \((\Ppairs, \transerel)\).
\end{proof}

\begin{lemma}
If \(P=C_{n}\) is a chain with \(n\) elements, which w.l.o.g. may be taken to be \(\{1,\dots,n\}\),
with the natural order, then the elements of \((\Ppairs, \transerel)\)
are \(\{(i,j) \mid i \neq j\}\), and
the resulting poset is ranked with rank function
\begin{equation*}
r((i,j)) =
\begin{cases}
  n - i + j  & i > j \\
  n - i + j - 1 & i < j
\end{cases}
\end{equation*}
The cover relations are
\begin{enumerate}
\item \((i,j) \lessdot (i,j+1)\) and \((i,j) \lessdot (i-1,j)\), when \(i > j\), \(r((i,j)) < n - 1\),
\item \((i,j) \lessdot (i,j+2)\), if \(j+2 \le n\), and \((i,j) \lessdot (i-2,j)\), if \(i-2 \ge 1\),
when \(i > j\), \(r((i,j)) < n - 1\),
\item \((i,j) \lessdot (i,j+1)\), if \(j+1 \le n\), and \((i,j) \lessdot (i-1,j)\), if \(i \ge 1\), when \(i < j\).
\end{enumerate}
\end{lemma}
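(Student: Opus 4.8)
The plan is to avoid describing the full order relation $\transerel$ explicitly and instead pin down the \emph{upper covers} of each element directly, since both the cover list and the rank function follow from these. The first ingredient is the monotonicity already used in the proof of Theorem \ref{punc-poset}: every $\erel$-step either fixes the first coordinate and weakly raises the second (an $\arel$-step) or fixes the second and weakly lowers the first (a $\brel$-step). Hence along any $\transerel$-chain the first coordinate is non-increasing and the second non-decreasing, so $(a,b)\transerel(a',b')$ forces $a'\le a$ and $b\le b'$. The key tool is then a \emph{first-step principle}: if $(i,j)\prec(u,v)$, choose an $\erel$-chain witnessing this and inspect its first step. Writing $c_{+}=(i,j^{+})$, where $j^{+}$ is the least element of $\{j+1,\dots,n\}\setminus\{i\}$, and $c_{-}=(i^{-},j)$, where $i^{-}$ is the greatest element of $\{1,\dots,i-1\}\setminus\{j\}$, the first step is an $\arel$-step (whence $c_{+}$ exists and $(u,v)\succeq c_{+}$) or a $\brel$-step (whence $c_{-}$ exists and $(u,v)\succeq c_{-}$).

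From this principle the covers fall out. If $(u,v)$ covers $(i,j)$, then $(u,v)\succeq c_{+}$ or $(u,v)\succeq c_{-}$, and since $c_{\pm}\succ(i,j)$ strictly, this forces $(u,v)=c_{+}$ or $(u,v)=c_{-}$. Conversely each existing $c_{\pm}$ is a genuine cover: $c_{+}$ is a strict $\erel$-successor of $(i,j)$, and if some $w=(w_{1},w_{2})$ satisfied $(i,j)\prec w\prec c_{+}$ then the principle applied to $w$ would give $w\succeq c_{+}$ (absurd by antisymmetry) or $w\succeq c_{-}$; the latter, combined with $w\transerel c_{+}$, yields $i\le w_{1}\le i^{-}\le i-1$ by monotonicity, a contradiction (and symmetrically for $c_{-}$). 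Reading off $j^{+}$ and $i^{-}$ in the three regimes reproduces the stated list: when $i>j+1$ (equivalently $i>j$ and $r<n-1$) the diagonal is not in the way, so $c_{+}=(i,j+1)$ and $c_{-}=(i-1,j)$; when $i=j+1$ (equivalently $r((i,j))=n-1$) the value $j+1=i$ is skipped, giving the crossing covers $c_{+}=(i,j+2)$ and $c_{-}=(i-2,j)$; and when $i<j$ no skip occurs, giving $c_{+}=(i,j+1)$ and $c_{-}=(i-1,j)$. The provisos $j+2\le n$, $i-2\ge1$, and so on are precisely the non-emptiness conditions for $j^{+}$ and $i^{-}$.

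It remains to verify that $r$ is a rank function. A direct substitution shows $r$ increases by exactly $1$ across each cover above, the only mildly interesting case being the crossing cover $(j+1,j)\lessdot(j+1,j+2)$, where $r$ passes from $n-1$ to $n$. The unique minimal element is $(n,1)$ with $r=1$, and applying the antitone involution $\tau((x,y))=(y,x)$ identifies the unique maximal element as $\tau((n,1))=(1,n)$ with $r=2n-2$; indeed $r(z)+r(\tau z)=2n-1$ throughout, a convenient consistency check. Since every cover raises $r$ by one and there is a single minimum and a single maximum, all maximal chains share the length $2n-3$, so the poset is graded and $r$ is its rank function, with the minimum normalized to rank $1$. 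I expect the main obstacle to be resisting the natural guess that $\transerel$ is simply the product order $u\le x,\ y\le v$; this is false, because the diagonal can block passage (for instance $(2,1)\not\transerel(1,2)$ for every $n$), and it is exactly to circumvent this subtlety that the argument works through the local first-step principle rather than any global description of $\transerel$.
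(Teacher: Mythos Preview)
Your argument is correct and more rigorous than the paper's. Both proofs rest on the same basic observation---that a single $\erel$-step moves one coordinate by one unit, except where the diagonal forces a jump of two---but they organise it differently. The paper simply defines $c((i,j))=j-i$, asserts without further justification that the covers in $\Ppairs$ are inherited from the covers $a\lessdot a+1$ in $C_n$, and then patches the gap at $c=0$ by declaring that the elements with $c=-1$ are covered by those with $c=+1$. Your first-step principle does the real work the paper skips: it shows that any strict successor of $(i,j)$ must dominate one of the two explicitly constructed elements $c_{+},c_{-}$, and then your monotonicity argument proves these are genuine covers rather than merely upper bounds. This is what actually certifies the cover list, and it also cleanly explains the ``skip'' at the diagonal as the case $j^{+}=j+2$ or $i^{-}=i-2$. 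Your closing remark that $\transerel$ is \emph{not} the restricted product order (witness $(2,1)\not\transerel(1,2)$) is a point the paper does not make and is exactly the subtlety that your local argument is designed to handle.

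One small caveat: your appeal to a unique minimum $(n,1)$ and unique maximum $(1,n)$ tacitly assumes $n\ge 3$. For $n=2$ the poset $\Ppairs$ is the two-element antichain $\{(1,2),(2,1)\}$, so there is no unique extremum and the phrase ``ranked with rank function $r$'' is to be read loosely. The paper's proof is equally silent on this degenerate case, so this is not a defect relative to the original, but it is worth flagging.
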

\begin{proof}
Define \(c((i,j)) = j - i\). Then
\[c(\Ppairs)=\{1-n, 2-n, \dots - 1, 1,2, \dots, n-1\}.\]
Since the cover relations in \(C_{n}\) are \(a \lessdot a+1\)
the cover relations in \(\Ppairs\) are \((i,j) \lessdot (i,j+1)\)
and \((i,j) \lessdot (i-1,j)\), provided that all these elements belong to
\[\Ppairs = \{(i,j) \mid 1 \le i,j \le n, \, i \neq j\}.\]
Each such cover relation \(u \lessdot v\) has \(c(u) = c(v)-1\).
However, there are no elements with \(c((i,j))=0\), so the elements with
\(c=-1\) are instead covered by those with \(c=+1\). The rank function \(r\) is shifted to take this
into account.
\end{proof}

\begin{mexample}
For a chain of length 4, the poset
\(\Ppairs\) is as follows.

\phantomsection
\label{}
\begin{center}\begin{tabular}{c|c}
 \(P=C_5\) & \(\Ppairs\) \\ \hline
\resizebox{!}{4cm}{
 \begin{tikzpicture}[>=latex,line join=bevel,]
\node (node_0) at (5.8112bp,6.5307bp) [draw,draw=none] {$1$};
  \node (node_1) at (5.8112bp,55.592bp) [draw,draw=none] {$2$};
  \node (node_2) at (5.8112bp,104.65bp) [draw,draw=none] {$3$};
  \node (node_3) at (5.8112bp,153.71bp) [draw,draw=none] {$4$};
  \draw [black,->] (node_0) ..controls (5.8112bp,19.029bp) and (5.8112bp,29.252bp)  .. (node_1);
  \draw [black,->] (node_1) ..controls (5.8112bp,68.09bp) and (5.8112bp,78.313bp)  .. (node_2);
  \draw [black,->] (node_2) ..controls (5.8112bp,117.15bp) and (5.8112bp,127.37bp)  .. (node_3);
\end{tikzpicture}
}

   &
\resizebox{!}{4cm}{
 \begin{tikzpicture}[>=latex,line join=bevel,]
\node (node_0) at (60.39bp,8.3018bp) [draw,draw=none] {$\left(4, 1\right)$};
  \node (node_1) at (37.39bp,60.906bp) [draw,draw=none] {$\left(4, 2\right)$};
  \node (node_3) at (84.39bp,60.906bp) [draw,draw=none] {$\left(3, 1\right)$};
  \node (node_2) at (14.39bp,113.51bp) [draw,draw=none] {$\left(4, 3\right)$};
  \node (node_4) at (61.39bp,113.51bp) [draw,draw=none] {$\left(3, 2\right)$};
  \node (node_7) at (61.39bp,166.11bp) [draw,draw=none] {$\left(2, 3\right)$};
  \node (node_6) at (108.39bp,113.51bp) [draw,draw=none] {$\left(2, 1\right)$};
  \node (node_5) at (14.39bp,166.11bp) [draw,draw=none] {$\left(3, 4\right)$};
  \node (node_9) at (108.39bp,166.11bp) [draw,draw=none] {$\left(1, 2\right)$};
  \node (node_8) at (37.39bp,218.72bp) [draw,draw=none] {$\left(2, 4\right)$};
  \node (node_10) at (84.39bp,218.72bp) [draw,draw=none] {$\left(1, 3\right)$};
  \node (node_11) at (60.39bp,271.32bp) [draw,draw=none] {$\left(1, 4\right)$};
  \draw [black,->] (node_0) ..controls (54.102bp,23.136bp) and (49.301bp,33.7bp)  .. (node_1);
  \draw [black,->] (node_0) ..controls (66.951bp,23.136bp) and (71.962bp,33.7bp)  .. (node_3);
  \draw [black,->] (node_1) ..controls (31.102bp,75.74bp) and (26.301bp,86.304bp)  .. (node_2);
  \draw [black,->] (node_1) ..controls (43.951bp,75.74bp) and (48.962bp,86.304bp)  .. (node_4);
  \draw [black,->] (node_2) ..controls (27.66bp,128.8bp) and (38.375bp,140.33bp)  .. (node_7);
  \draw [black,->] (node_3) ..controls (78.102bp,75.74bp) and (73.301bp,86.304bp)  .. (node_4);
  \draw [black,->] (node_3) ..controls (90.951bp,75.74bp) and (95.962bp,86.304bp)  .. (node_6);
  \draw [black,->] (node_4) ..controls (48.12bp,128.8bp) and (37.405bp,140.33bp)  .. (node_5);
  \draw [black,->] (node_4) ..controls (74.66bp,128.8bp) and (85.375bp,140.33bp)  .. (node_9);
  \draw [black,->] (node_5) ..controls (20.678bp,180.95bp) and (25.479bp,191.51bp)  .. (node_8);
  \draw [black,->] (node_6) ..controls (95.12bp,128.8bp) and (84.405bp,140.33bp)  .. (node_7);
  \draw [black,->] (node_7) ..controls (54.829bp,180.95bp) and (49.819bp,191.51bp)  .. (node_8);
  \draw [black,->] (node_7) ..controls (67.678bp,180.95bp) and (72.479bp,191.51bp)  .. (node_10);
  \draw [black,->] (node_8) ..controls (43.678bp,233.55bp) and (48.479bp,244.11bp)  .. (node_11);
  \draw [black,->] (node_9) ..controls (101.83bp,180.95bp) and (96.819bp,191.51bp)  .. (node_10);
  \draw [black,->] (node_10) ..controls (77.829bp,233.55bp) and (72.819bp,244.11bp)  .. (node_11);
\end{tikzpicture}
}

\end{tabular}\end{center}
\end{mexample}
\subsection{The quotient poset}
\label{sec:orgcca35bf}
Recall the following definitions (see e.g. \autocite{StanleyTwoPosetPolytopes}, \autocite{AhmadOrderChain})
\begin{definition}
Let \((Q,\le)\) be a finite poset, and \(\gamma\) a set partition  on \(Q\).
We say that \(\gamma\) is \textbf{compatible} if the following relation on the blocks of \(\gamma\) has a transitive closure which is
a poset (in other words, the transitive closure should be antisymmetric):
\[
B_{\iota} \le_{\gamma} B_{\nu} \quad \iff \quad \exists a \in B_{\iota}, b \in B_{\nu}, \, a \le b.
\]
In this case, the resulting poset of blocks, ordered by \(\le_{\gamma}\), is the called a \textbf{quotient} of \(Q\),
and denoted by \(Q/\gamma\).
\label{def-set-partition-compatible}
\end{definition}
We will later need:
\begin{definition}
Let \((Q,\le)\) be a finite poset.
\begin{enumerate}
\item Let \(S \subset Q\). Then \(S\) is \emph{connected} as an induced subposet of \(Q\) (``connected inside \(Q\)'')
if the undirected graph of the Hasse diagram of the induced subposet of \(Q\) is connected.
Concretely, for any \(a,b \in S\) there is a sequence \(s_1,\dots, s_n \in S\)
such that \(s_1=a\), \(s_n=b\) and for \(1 \le i < n\), either \(s_i \le s_{i+1}\) or \(s_i \ge s_{i+1}\).
\item Let \(\gamma\) be a set partition  on \(Q\). Then \(\gamma\) is \emph{connected} if every block
is connected as an induced subposet of \(Q\).
\end{enumerate}
\label{def-set-partition-connected}
\end{definition}

We now construct a convenient  quotient of \((\Ppairs, \transerel)\).
\begin{definition}
Let \((P,\le)\) be a finite poset.
Construct a set partition \(\sigma\) on \(\Ppairs\) by
\begin{enumerate}
\item The ``bottom block''  consists of all \((x,y)\) with \(x < y\),
\item the ``middle blocks'' consists of all \((x,y)\) with \(x \parallel y\); each such element of \(\Ppairs\)
form a singleton block,
\item the ``top block'' consists of all \((x,y)\) with \(x > y\).
\end{enumerate}

This set partition is compatible with \((\Ppairs, \transerel)\), thus we can form the
quotient
\[(\Pqu, \pqurel) = \Ppairs/\sigma. \]
\label{def-quotient-poset}
\end{definition}

\begin{lemma}
If \((P,\le)\) is a finite  anti-chain, then the quotient poset \((\Pqu, \pqurel)\)
is equal to \((\Ppairs, \transerel)\) and thus an antichain.

If \(P\) is a finite chain, then \((\Pqu, \pqurel)\) is a two-element chain.

For other finite \(P\), \((\Pqu, \pqurel)\) has a unique minimal element \(\hat{0}\) and a unique maximal element \(\hat{1}\), and some other elements in between.
\label{lemma-quotient-poset}
\end{lemma}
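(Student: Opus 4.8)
The plan is to dispatch the antichain and chain cases from the earlier structural lemmas, and to put all the work into the third case, which I would reduce, via the involution $\tau$, to a single descent argument.

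\emph{Antichain and chain.} If $P$ is an antichain then, by the earlier lemma, $\arel$ and $\brel$ are trivial, so $(\Ppairs,\transerel)$ is an antichain; moreover there are no pairs with $x<y$ or $x>y$, so every block of $\sigma$ is a singleton middle block and $\Pqu=(\Ppairs,\transerel)$ as posets. If $P=C_{n}$ is a chain there are no parallel pairs, so $\sigma$ has exactly the two blocks $T=\{(x,y)\mid x>y\}$ and $B=\{(x,y)\mid x<y\}$. Here I would invoke the earlier chain lemma: its rank function and cover relations exhibit $(n,1)$ as the unique minimal and $(1,n)$ as the unique maximal element of $(\Ppairs,\transerel)$ (for $n\ge 3$; the cases $n\le 2$ are degenerate). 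Since $(n,1)\in T$ and $(1,n)\in B$, we get $T\pqurel B$, so $\Pqu$ is the two-element chain $T\pqurel B$.

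\emph{The remaining case: reduction.} Assume $P$ is neither a chain nor an antichain, and set $\hat{0}=T$, $\hat{1}=B$. Both blocks are nonempty: as $P$ is not an antichain there is a pair $a<b$, giving $(b,a)\in T$ and $(a,b)\in B$; as $P$ is not a chain there is at least one middle singleton. The antitone involution $\tau(x,y)=(y,x)$ interchanges $T$ and $B$ and permutes the middle singletons, hence descends to an order-reversing involution of $\Pqu$ swapping $\hat{0}$ and $\hat{1}$. It therefore suffices to establish the single \emph{crux claim}: every middle block $\{(x,y)\}$, $x\parallel y$, has a top-block pair below it, i.e.\ some $(u,v)$ with $u>v$ and $(u,v)\transerel (x,y)$. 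Applying $\tau$ gives the dual statement, and the two together yield $T\pqurel\{(x,y)\}\pqurel B$ for each middle block; in particular $T\pqurel B$, $\hat{0}$ lies below every block, $\hat{1}$ above every block, and each middle block sits strictly between them (the ``other elements in between'').

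\emph{The crux and the main obstacle.} To produce the required pair I would descend from $(x,y)$ along the covers of $\transerel$, repeatedly raising the first coordinate or lowering the second in $P$, trying to reach a pair whose first coordinate exceeds its second. This succeeds cleanly when $P$ has a global minimum $\hat{0}_{P}$: then $\hat{0}_{P}\le y$ and $\hat{0}_{P}<x$ (otherwise $x\le y$), so $(x,\hat{0}_{P})\in T$ and $(x,\hat{0}_{P})\arel (x,y)$, proving the claim in one step (a global maximum works dually). The main obstacle is the fully general statement: the descent stalls exactly at a parallel pair $(x,y)$ whose first coordinate is minimal and whose second coordinate is maximal in $P$, for such a pair is itself maximal in $(\Ppairs,\transerel)$ and forms a middle block distinct from $\hat{1}$. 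This already occurs for $P=C_{2}+C_{2}$, whose quotient (as the paper's own diagram shows) has several minimal and maximal blocks. I therefore expect the clean statement to need a hypothesis excluding such doubly-extremal parallel pairs — for instance that $P$ be bounded, or that every minimal element be comparable to every maximal element — and the real work is to isolate the correct hypothesis and verify that the descent always terminates at a top-block pair under it.
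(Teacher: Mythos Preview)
You are right, and the paper is wrong: the third clause of the lemma fails for $P=C_{2}+C_{2}$, exactly as you argue. The paper's proof is the single word ``Immediate,'' which it is not.

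To spell out your counterexample in the paper's labeling ($1<2$ and $3<4$, all other pairs parallel): the comparable pairs $(2,1),(4,3),(1,2),(3,4)$ are each isolated in $(\Ppairs,\transerel)$. From $(2,1)$, for instance, an $\arel$-step would require raising the second coordinate above $1$ inside $\{1,2\}$, landing on the forbidden diagonal, and a $\brel$-step would require lowering the first coordinate below $2$ inside $\{1,2\}$, again hitting the diagonal. Hence in $\Pqu$ the block $\hat{0}=\{(2,1),(4,3)\}$ is isolated, as is $\hat{1}$, while $\{(4,1)\}$ and $\{(2,3)\}$ are further minimal singleton blocks. So $\Pqu$ has no unique minimum and no unique maximum. (Minor correction: the figure you cite displays $\Ppairs$ for $C_{2}+C_{2}$, not $\Pqu$; your inference from it is nonetheless sound.)

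Your reduction via the antitone involution $\tau$ is the right way to organize the argument, and your one-step proof in the bounded case is clean and correct: if $P$ has a global minimum $\hat{0}_{P}$ then for any parallel pair $(x,y)$ one has $\hat{0}_{P}<x$ and $(x,\hat{0}_{P})\arel(x,y)$, so the comparable-pair block lies below every middle block; applying $\tau$ yields the dual. Either of your proposed hypotheses---a global extremum in $P$, or every minimal element comparable to every maximal element---repairs the statement. You are also right to flag the chain case for small $n$: for $P=C_{2}$ the pairs $(2,1)$ and $(1,2)$ are isolated in $\Ppairs$, so $\Pqu$ is a two-element \emph{antichain}, not a chain; and for $P=C_{1}$ the quotient is empty.

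None of this damages the paper's main development. Theorem~\ref{thm-induced} only needs that deleting the two distinguished blocks $\hat{0},\hat{1}$ from $\Pqu$ leaves $\twoanti{P}$, which is true regardless of whether those blocks are extremal, and the probability functions polytope is constructed directly from $\twoanti{P}$.
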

\begin{proof}
Immediate.
\end{proof}
\subsubsection{Examples}
\label{sec:org5e69bec}
\begin{mexample}
Let again \(P=C_{2} \times C_{2}\). Then \(\Pqu\) becomes:
\phantomsection
\label{}
\begin{center}\begin{tabular}{c|c|c}
\(P\) & \(\Ppairs\) & \(\Pqu\) \\ \hline
\resizebox{!}{3cm}{
 \begin{tikzpicture}[>=latex,line join=bevel,]
\node (node_0) at (20.811bp,6.5307bp) [draw,draw=none] {$1$};
  \node (node_1) at (5.8112bp,55.592bp) [draw,draw=none] {$2$};
  \node (node_2) at (35.811bp,55.592bp) [draw,draw=none] {$3$};
  \node (node_3) at (20.811bp,104.65bp) [draw,draw=none] {$4$};
  \draw [black,->] (node_0) ..controls (17.124bp,19.099bp) and (13.812bp,29.49bp)  .. (node_1);
  \draw [black,->] (node_0) ..controls (24.499bp,19.099bp) and (27.81bp,29.49bp)  .. (node_2);
  \draw [black,->] (node_1) ..controls (9.4985bp,68.161bp) and (12.81bp,78.552bp)  .. (node_3);
  \draw [black,->] (node_2) ..controls (32.124bp,68.161bp) and (28.812bp,78.552bp)  .. (node_3);
\end{tikzpicture} 
}

 & 
\resizebox{!}{3cm}{
 \begin{tikzpicture}[>=latex,line join=bevel,]
\node (node_0) at (84.39bp,8.3018bp) [draw,draw=none] {$\left(4, 1\right)$};
  \node (node_1) at (14.39bp,60.906bp) [draw,draw=none] {$\left(4, 2\right)$};
  \node (node_2) at (108.39bp,60.906bp) [draw,draw=none] {$\left(4, 3\right)$};
  \node (node_3) at (61.39bp,60.906bp) [draw,draw=none] {$\left(3, 1\right)$};
  \node (node_6) at (155.39bp,60.906bp) [draw,draw=none] {$\left(2, 1\right)$};
  \node (node_4) at (43.39bp,113.51bp) [draw,draw=none] {$\left(3, 2\right)$};
  \node (node_7) at (125.39bp,113.51bp) [draw,draw=none] {$\left(2, 3\right)$};
  \node (node_5) at (20.39bp,166.11bp) [draw,draw=none] {$\left(3, 4\right)$};
  \node (node_10) at (67.39bp,166.11bp) [draw,draw=none] {$\left(1, 2\right)$};
  \node (node_11) at (90.39bp,218.72bp) [draw,draw=none] {$\left(1, 4\right)$};
  \node (node_8) at (114.39bp,166.11bp) [draw,draw=none] {$\left(2, 4\right)$};
  \node (node_9) at (161.39bp,166.11bp) [draw,draw=none] {$\left(1, 3\right)$};
  \draw [black,->] (node_0) ..controls (63.757bp,24.218bp) and (46.772bp,36.496bp)  .. (node_1);
  \draw [black,->] (node_0) ..controls (90.951bp,23.136bp) and (95.962bp,33.7bp)  .. (node_2);
  \draw [black,->] (node_0) ..controls (78.102bp,23.136bp) and (73.301bp,33.7bp)  .. (node_3);
  \draw [black,->] (node_0) ..controls (105.32bp,24.218bp) and (122.55bp,36.496bp)  .. (node_6);
  \draw [black,->] (node_1) ..controls (22.361bp,75.815bp) and (28.507bp,86.539bp)  .. (node_4);
  \draw [black,->] (node_2) ..controls (112.99bp,75.589bp) and (116.43bp,85.836bp)  .. (node_7);
  \draw [black,->] (node_3) ..controls (56.523bp,75.589bp) and (52.878bp,85.836bp)  .. (node_4);
  \draw [black,->] (node_4) ..controls (37.102bp,128.34bp) and (32.301bp,138.91bp)  .. (node_5);
  \draw [black,->] (node_4) ..controls (49.951bp,128.34bp) and (54.962bp,138.91bp)  .. (node_10);
  \draw [black,->] (node_5) ..controls (41.023bp,182.03bp) and (58.008bp,194.31bp)  .. (node_11);
  \draw [black,->] (node_6) ..controls (147.1bp,75.891bp) and (140.65bp,86.775bp)  .. (node_7);
  \draw [black,->] (node_7) ..controls (122.43bp,128.12bp) and (120.24bp,138.21bp)  .. (node_8);
  \draw [black,->] (node_7) ..controls (135.39bp,128.57bp) and (143.25bp,139.62bp)  .. (node_9);
  \draw [black,->] (node_8) ..controls (107.83bp,180.95bp) and (102.82bp,191.51bp)  .. (node_11);
  \draw [black,->] (node_9) ..controls (140.46bp,182.03bp) and (123.23bp,194.31bp)  .. (node_11);
  \draw [black,->] (node_10) ..controls (73.678bp,180.95bp) and (78.479bp,191.51bp)  .. (node_11);
\end{tikzpicture} 
}

 & 
\resizebox{!}{3cm}{
 \begin{tikzpicture}[>=latex,line join=bevel,]
\node (node_0) at (37.39bp,7.8452bp) [draw,draw=none] {$\hat{0}$};
  \node (node_1) at (14.39bp,59.992bp) [draw,draw=none] {$\left(3, 2\right)$};
  \node (node_2) at (61.39bp,59.992bp) [draw,draw=none] {$\left(2, 3\right)$};
  \node (node_3) at (37.39bp,112.14bp) [draw,draw=none] {$\hat{1}$};
  \draw [black,->] (node_0) ..controls (31.335bp,22.047bp) and (26.489bp,32.613bp)  .. (node_1);
  \draw [black,->] (node_0) ..controls (43.708bp,22.047bp) and (48.765bp,32.613bp)  .. (node_2);
  \draw [black,->] (node_1) ..controls (20.678bp,74.702bp) and (25.479bp,85.17bp)  .. (node_3);
  \draw [black,->] (node_2) ..controls (54.793bp,74.777bp) and (49.706bp,85.406bp)  .. (node_3);
\end{tikzpicture} 
}

\end{tabular}\end{center}
\end{mexample}

\begin{mexample}
We next consider \(P=C_{2} \times C_{3}\), and tabulate the Hasse diagrams of
\(P\), \(\Ppairs\), and \(\Pqu\).
\phantomsection
\label{}
\begin{center}\begin{tabular}{c|c|c}
\(P=C_2 \times C_3\) & \(\Ppairs\) & \(\Pqu\) \\ \hline
\resizebox{!}{4cm}{
 \begin{tikzpicture}[>=latex,line join=bevel,]
\node (node_0) at (20.811bp,6.5307bp) [draw,draw=none] {$1$};
  \node (node_1) at (5.8112bp,55.592bp) [draw,draw=none] {$2$};
  \node (node_3) at (35.811bp,55.592bp) [draw,draw=none] {$4$};
  \node (node_2) at (5.8112bp,104.65bp) [draw,draw=none] {$3$};
  \node (node_4) at (35.811bp,104.65bp) [draw,draw=none] {$5$};
  \node (node_5) at (20.811bp,153.71bp) [draw,draw=none] {$6$};
  \draw [black,->] (node_0) ..controls (17.124bp,19.099bp) and (13.812bp,29.49bp)  .. (node_1);
  \draw [black,->] (node_0) ..controls (24.499bp,19.099bp) and (27.81bp,29.49bp)  .. (node_3);
  \draw [black,->] (node_1) ..controls (5.8112bp,68.09bp) and (5.8112bp,78.313bp)  .. (node_2);
  \draw [black,->] (node_1) ..controls (13.367bp,68.445bp) and (20.424bp,79.516bp)  .. (node_4);
  \draw [black,->] (node_2) ..controls (9.4985bp,117.22bp) and (12.81bp,127.61bp)  .. (node_5);
  \draw [black,->] (node_3) ..controls (35.811bp,68.09bp) and (35.811bp,78.313bp)  .. (node_4);
  \draw [black,->] (node_4) ..controls (32.124bp,117.22bp) and (28.812bp,127.61bp)  .. (node_5);
\end{tikzpicture}
}

   &
\resizebox{!}{5cm}{
 \begin{tikzpicture}[>=latex,line join=bevel,]
\node (node_0) at (225.39bp,8.3018bp) [draw,draw=none] {$\left(6, 1\right)$};
  \node (node_1) at (155.39bp,60.906bp) [draw,draw=none] {$\left(6, 2\right)$};
  \node (node_3) at (249.39bp,60.906bp) [draw,draw=none] {$\left(6, 4\right)$};
  \node (node_5) at (202.39bp,60.906bp) [draw,draw=none] {$\left(5, 1\right)$};
  \node (node_15) at (296.39bp,60.906bp) [draw,draw=none] {$\left(3, 1\right)$};
  \node (node_2) at (14.39bp,113.51bp) [draw,draw=none] {$\left(6, 3\right)$};
  \node (node_4) at (155.39bp,113.51bp) [draw,draw=none] {$\left(6, 5\right)$};
  \node (node_6) at (61.39bp,113.51bp) [draw,draw=none] {$\left(5, 2\right)$};
  \node (node_16) at (202.39bp,113.51bp) [draw,draw=none] {$\left(3, 2\right)$};
  \node (node_7) at (21.39bp,166.11bp) [draw,draw=none] {$\left(5, 3\right)$};
  \node (node_8) at (249.39bp,113.51bp) [draw,draw=none] {$\left(5, 4\right)$};
  \node (node_17) at (343.39bp,113.51bp) [draw,draw=none] {$\left(3, 4\right)$};
  \node (node_13) at (120.39bp,218.72bp) [draw,draw=none] {$\left(4, 5\right)$};
  \node (node_18) at (266.39bp,166.11bp) [draw,draw=none] {$\left(3, 5\right)$};
  \node (node_10) at (108.39bp,113.51bp) [draw,draw=none] {$\left(4, 1\right)$};
  \node (node_20) at (296.39bp,113.51bp) [draw,draw=none] {$\left(2, 1\right)$};
  \node (node_11) at (68.39bp,166.11bp) [draw,draw=none] {$\left(4, 2\right)$};
  \node (node_9) at (73.39bp,218.72bp) [draw,draw=none] {$\left(5, 6\right)$};
  \node (node_12) at (26.39bp,218.72bp) [draw,draw=none] {$\left(4, 3\right)$};
  \node (node_21) at (167.39bp,218.72bp) [draw,draw=none] {$\left(2, 3\right)$};
  \node (node_22) at (313.39bp,166.11bp) [draw,draw=none] {$\left(2, 4\right)$};
  \node (node_14) at (96.39bp,271.32bp) [draw,draw=none] {$\left(4, 6\right)$};
  \node (node_24) at (190.39bp,271.32bp) [draw,draw=none] {$\left(2, 6\right)$};
  \node (node_26) at (214.39bp,218.72bp) [draw,draw=none] {$\left(1, 2\right)$};
  \node (node_27) at (143.39bp,271.32bp) [draw,draw=none] {$\left(1, 3\right)$};
  \node (node_28) at (237.39bp,271.32bp) [draw,draw=none] {$\left(1, 5\right)$};
  \node (node_29) at (166.39bp,323.92bp) [draw,draw=none] {$\left(1, 6\right)$};
  \node (node_19) at (261.39bp,218.72bp) [draw,draw=none] {$\left(3, 6\right)$};
  \node (node_23) at (308.39bp,218.72bp) [draw,draw=none] {$\left(2, 5\right)$};
  \node (node_25) at (355.39bp,218.72bp) [draw,draw=none] {$\left(1, 4\right)$};
  \draw [black,->] (node_0) ..controls (204.76bp,24.218bp) and (187.77bp,36.496bp)  .. (node_1);
  \draw [black,->] (node_0) ..controls (231.95bp,23.136bp) and (236.96bp,33.7bp)  .. (node_3);
  \draw [black,->] (node_0) ..controls (219.1bp,23.136bp) and (214.3bp,33.7bp)  .. (node_5);
  \draw [black,->] (node_0) ..controls (246.32bp,24.218bp) and (263.55bp,36.496bp)  .. (node_15);
  \draw [black,->] (node_1) ..controls (117.24bp,75.599bp) and (69.01bp,92.906bp)  .. (node_2);
  \draw [black,->] (node_1) ..controls (155.39bp,75.437bp) and (155.39bp,85.372bp)  .. (node_4);
  \draw [black,->] (node_1) ..controls (127.05bp,77.16bp) and (102.85bp,90.192bp)  .. (node_6);
  \draw [black,->] (node_1) ..controls (168.66bp,76.193bp) and (179.37bp,87.729bp)  .. (node_16);
  \draw [black,->] (node_2) ..controls (16.272bp,128.12bp) and (17.668bp,138.21bp)  .. (node_7);
  \draw [black,->] (node_3) ..controls (221.05bp,77.16bp) and (196.85bp,90.192bp)  .. (node_4);
  \draw [black,->] (node_3) ..controls (249.39bp,75.437bp) and (249.39bp,85.372bp)  .. (node_8);
  \draw [black,->] (node_3) ..controls (277.73bp,77.16bp) and (301.94bp,90.192bp)  .. (node_17);
  \draw [black,->] (node_4) ..controls (147.56bp,137.59bp) and (134.26bp,176.81bp)  .. (node_13);
  \draw [black,->] (node_4) ..controls (187.61bp,129.2bp) and (219.54bp,143.75bp)  .. (node_18);
  \draw [black,->] (node_5) ..controls (164.24bp,75.599bp) and (116.01bp,92.906bp)  .. (node_6);
  \draw [black,->] (node_5) ..controls (215.66bp,76.193bp) and (226.37bp,87.729bp)  .. (node_8);
  \draw [black,->] (node_5) ..controls (174.05bp,77.16bp) and (149.85bp,90.192bp)  .. (node_10);
  \draw [black,->] (node_5) ..controls (230.73bp,77.16bp) and (254.94bp,90.192bp)  .. (node_20);
  \draw [black,->] (node_6) ..controls (50.216bp,128.65bp) and (41.356bp,139.85bp)  .. (node_7);
  \draw [black,->] (node_6) ..controls (63.272bp,128.12bp) and (64.668bp,138.21bp)  .. (node_11);
  \draw [black,->] (node_7) ..controls (36.227bp,181.55bp) and (48.422bp,193.42bp)  .. (node_9);
  \draw [black,->] (node_7) ..controls (22.727bp,180.64bp) and (23.709bp,190.58bp)  .. (node_12);
  \draw [black,->] (node_7) ..controls (60.469bp,180.66bp) and (111.86bp,198.47bp)  .. (node_21);
  \draw [black,->] (node_8) ..controls (207.53bp,139.06bp) and (132.82bp,182.87bp)  .. (node_9);
  \draw [black,->] (node_8) ..controls (268.16bp,129.35bp) and (283.48bp,141.46bp)  .. (node_22);
  \draw [black,->] (node_9) ..controls (79.678bp,233.55bp) and (84.479bp,244.11bp)  .. (node_14);
  \draw [black,->] (node_9) ..controls (107.23bp,234.35bp) and (142.35bp,249.54bp)  .. (node_24);
  \draw [black,->] (node_10) ..controls (97.216bp,128.65bp) and (88.356bp,139.85bp)  .. (node_11);
  \draw [black,->] (node_11) ..controls (56.595bp,181.32bp) and (47.156bp,192.7bp)  .. (node_12);
  \draw [black,->] (node_11) ..controls (83.227bp,181.55bp) and (95.422bp,193.42bp)  .. (node_13);
  \draw [black,->] (node_11) ..controls (107.47bp,180.66bp) and (158.86bp,198.47bp)  .. (node_26);
  \draw [black,->] (node_12) ..controls (47.023bp,234.63bp) and (64.008bp,246.91bp)  .. (node_14);
  \draw [black,->] (node_12) ..controls (60.227bp,234.35bp) and (95.352bp,249.54bp)  .. (node_27);
  \draw [black,->] (node_13) ..controls (113.83bp,233.55bp) and (108.82bp,244.11bp)  .. (node_14);
  \draw [black,->] (node_13) ..controls (154.23bp,234.35bp) and (189.35bp,249.54bp)  .. (node_28);
  \draw [black,->] (node_14) ..controls (117.02bp,287.24bp) and (134.01bp,299.51bp)  .. (node_29);
  \draw [black,->] (node_15) ..controls (268.05bp,77.16bp) and (243.85bp,90.192bp)  .. (node_16);
  \draw [black,->] (node_15) ..controls (309.66bp,76.193bp) and (320.37bp,87.729bp)  .. (node_17);
  \draw [black,->] (node_15) ..controls (296.39bp,75.437bp) and (296.39bp,85.372bp)  .. (node_20);
  \draw [black,->] (node_16) ..controls (221.16bp,129.35bp) and (236.48bp,141.46bp)  .. (node_18);
  \draw [black,->] (node_16) ..controls (205.06bp,137.44bp) and (209.55bp,176.09bp)  .. (node_26);
  \draw [black,->] (node_17) ..controls (320.46bp,129.58bp) and (301.27bp,142.19bp)  .. (node_18);
  \draw [black,->] (node_17) ..controls (335.1bp,128.49bp) and (328.65bp,139.38bp)  .. (node_22);
  \draw [black,->] (node_18) ..controls (265.05bp,180.64bp) and (264.07bp,190.58bp)  .. (node_19);
  \draw [black,->] (node_18) ..controls (278.19bp,181.32bp) and (287.62bp,192.7bp)  .. (node_23);
  \draw [black,->] (node_19) ..controls (240.46bp,234.63bp) and (223.23bp,246.91bp)  .. (node_24);
  \draw [black,->] (node_20) ..controls (266.44bp,138.47bp) and (213.01bp,181.22bp)  .. (node_21);
  \draw [black,->] (node_20) ..controls (300.99bp,128.19bp) and (304.43bp,138.44bp)  .. (node_22);
  \draw [black,->] (node_21) ..controls (173.68bp,233.55bp) and (178.48bp,244.11bp)  .. (node_24);
  \draw [black,->] (node_21) ..controls (160.83bp,233.55bp) and (155.82bp,244.11bp)  .. (node_27);
  \draw [black,->] (node_22) ..controls (312.05bp,180.64bp) and (311.07bp,190.58bp)  .. (node_23);
  \draw [black,->] (node_22) ..controls (325.19bp,181.32bp) and (334.62bp,192.7bp)  .. (node_25);
  \draw [black,->] (node_23) ..controls (274.89bp,234.08bp) and (239.24bp,249.37bp)  .. (node_24);
  \draw [black,->] (node_23) ..controls (287.46bp,234.63bp) and (270.23bp,246.91bp)  .. (node_28);
  \draw [black,->] (node_24) ..controls (183.83bp,286.15bp) and (178.82bp,296.72bp)  .. (node_29);
  \draw [black,->] (node_25) ..controls (321.89bp,234.08bp) and (286.24bp,249.37bp)  .. (node_28);
  \draw [black,->] (node_26) ..controls (193.46bp,234.63bp) and (176.23bp,246.91bp)  .. (node_27);
  \draw [black,->] (node_26) ..controls (220.68bp,233.55bp) and (225.48bp,244.11bp)  .. (node_28);
  \draw [black,->] (node_27) ..controls (149.68bp,286.15bp) and (154.48bp,296.72bp)  .. (node_29);
  \draw [black,->] (node_28) ..controls (216.46bp,287.24bp) and (199.23bp,299.51bp)  .. (node_29);
\end{tikzpicture}
}

   &
\resizebox{!}{4cm}{
 \begin{tikzpicture}[>=latex,line join=bevel,]
\node (node_0) at (84.39bp,7.8452bp) [draw,draw=none] {$\hat{0}$};
  \node (node_1) at (37.39bp,59.992bp) [draw,draw=none] {$\left(3, 4\right)$};
  \node (node_4) at (84.39bp,59.992bp) [draw,draw=none] {$\left(4, 2\right)$};
  \node (node_5) at (131.39bp,59.992bp) [draw,draw=none] {$\left(5, 3\right)$};
  \node (node_2) at (14.39bp,112.6bp) [draw,draw=none] {$\left(2, 4\right)$};
  \node (node_3) at (61.39bp,112.6bp) [draw,draw=none] {$\left(3, 5\right)$};
  \node (node_7) at (61.39bp,164.74bp) [draw,draw=none] {$\hat{1}$};
  \node (node_6) at (108.39bp,112.6bp) [draw,draw=none] {$\left(4, 3\right)$};
  \draw [black,->] (node_0) ..controls (72.014bp,22.05bp) and (60.921bp,33.886bp)  .. (node_1);
  \draw [black,->] (node_0) ..controls (84.39bp,21.751bp) and (84.39bp,31.682bp)  .. (node_4);
  \draw [black,->] (node_0) ..controls (96.766bp,22.05bp) and (107.86bp,33.886bp)  .. (node_5);
  \draw [black,->] (node_1) ..controls (31.102bp,74.826bp) and (26.301bp,85.391bp)  .. (node_2);
  \draw [black,->] (node_1) ..controls (43.951bp,74.826bp) and (48.962bp,85.391bp)  .. (node_3);
  \draw [black,->] (node_2) ..controls (27.887bp,128.0bp) and (39.102bp,139.96bp)  .. (node_7);
  \draw [black,->] (node_3) ..controls (61.39bp,127.08bp) and (61.39bp,137.07bp)  .. (node_7);
  \draw [black,->] (node_4) ..controls (90.951bp,74.826bp) and (95.962bp,85.391bp)  .. (node_6);
  \draw [black,->] (node_5) ..controls (125.1bp,74.826bp) and (120.3bp,85.391bp)  .. (node_6);
  \draw [black,->] (node_6) ..controls (94.893bp,128.0bp) and (83.679bp,139.96bp)  .. (node_7);
\end{tikzpicture}
}

\end{tabular}\end{center}

The bottom block \(\hat{0}\)
represents all \((x,y) \in \Ppairs\) with \(x > y\), thus \(\pi(x,y)=0\) for
all probability functions \(\pi\). Similarly,
the top block \(\hat{1}\) represents all \((x,y) \in \Ppairs\) with \(x < y\),
thus \(\pi(x,y)=1\) for
all probability functions \(\pi\); recall that \(\pi(x,y)\) represents the likelihood that \(x\) precedes \(y\).

This poset encompasses almost all information needed to parameterize probability functions;
what is missing is the information that \(\pi(x,y) + \pi(y,x)=1\).
We will proceed to amend our constructions to take this final piece of the puzzle into account.
\end{mexample}

\begin{mexample}
Before doing so, we show one more example; \(P=C_{3} \times C_{3}\) with the top and bottom elements removed.
\phantomsection
\label{}
\begin{center}\begin{tabular}{p{1cm}|c}
 \(P\)  & 
\resizebox{!}{2cm}{
 \begin{tikzpicture}[>=latex,line join=bevel,]
\node (node_0) at (20.811bp,6.5307bp) [draw,draw=none] {$3$};
  \node (node_1) at (5.8112bp,55.592bp) [draw,draw=none] {$4$};
  \node (node_4) at (35.811bp,55.592bp) [draw,draw=none] {$5$};
  \node (node_5) at (20.811bp,104.65bp) [draw,draw=none] {$6$};
  \node (node_2) at (50.811bp,6.5307bp) [draw,draw=none] {$1$};
  \node (node_3) at (65.811bp,55.592bp) [draw,draw=none] {$2$};
  \node (node_6) at (57.811bp,104.65bp) [draw,draw=none] {$7$};
  \draw [black,->] (node_0) ..controls (17.124bp,19.099bp) and (13.812bp,29.49bp)  .. (node_1);
  \draw [black,->] (node_0) ..controls (24.499bp,19.099bp) and (27.81bp,29.49bp)  .. (node_4);
  \draw [black,->] (node_1) ..controls (9.4985bp,68.161bp) and (12.81bp,78.552bp)  .. (node_5);
  \draw [black,->] (node_2) ..controls (54.499bp,19.099bp) and (57.81bp,29.49bp)  .. (node_3);
  \draw [black,->] (node_2) ..controls (47.124bp,19.099bp) and (43.812bp,29.49bp)  .. (node_4);
  \draw [black,->] (node_3) ..controls (63.857bp,68.09bp) and (62.119bp,78.313bp)  .. (node_6);
  \draw [black,->] (node_4) ..controls (32.124bp,68.161bp) and (28.812bp,78.552bp)  .. (node_5);
  \draw [black,->] (node_4) ..controls (41.286bp,68.303bp) and (46.301bp,79.031bp)  .. (node_6);
\end{tikzpicture} 
}

   \\  \hline  
 \(\Ppairs\)  & 
\resizebox{!}{3cm}{
 \begin{tikzpicture}[>=latex,line join=bevel,]
\node (node_0) at (95.39bp,8.3018bp) [draw,draw=none] {$\left(7, 3\right)$};
  \node (node_1) at (71.39bp,60.906bp) [draw,draw=none] {$\left(7, 4\right)$};
  \node (node_4) at (212.39bp,60.906bp) [draw,draw=none] {$\left(7, 5\right)$};
  \node (node_12) at (118.39bp,60.906bp) [draw,draw=none] {$\left(5, 3\right)$};
  \node (node_30) at (24.39bp,60.906bp) [draw,draw=none] {$\left(2, 3\right)$};
  \node (node_5) at (155.39bp,113.51bp) [draw,draw=none] {$\left(7, 6\right)$};
  \node (node_13) at (108.39bp,113.51bp) [draw,draw=none] {$\left(5, 4\right)$};
  \node (node_31) at (14.39bp,113.51bp) [draw,draw=none] {$\left(2, 4\right)$};
  \node (node_2) at (306.39bp,8.3018bp) [draw,draw=none] {$\left(7, 1\right)$};
  \node (node_3) at (494.39bp,60.906bp) [draw,draw=none] {$\left(7, 2\right)$};
  \node (node_14) at (400.39bp,60.906bp) [draw,draw=none] {$\left(5, 1\right)$};
  \node (node_32) at (259.39bp,60.906bp) [draw,draw=none] {$\left(2, 1\right)$};
  \node (node_15) at (494.39bp,113.51bp) [draw,draw=none] {$\left(5, 2\right)$};
  \node (node_27) at (398.39bp,166.11bp) [draw,draw=none] {$\left(3, 5\right)$};
  \node (node_33) at (202.39bp,113.51bp) [draw,draw=none] {$\left(2, 5\right)$};
  \node (node_17) at (249.39bp,166.11bp) [draw,draw=none] {$\left(5, 6\right)$};
  \node (node_34) at (155.39bp,166.11bp) [draw,draw=none] {$\left(2, 6\right)$};
  \node (node_6) at (188.39bp,8.3018bp) [draw,draw=none] {$\left(6, 3\right)$};
  \node (node_7) at (165.39bp,60.906bp) [draw,draw=none] {$\left(6, 4\right)$};
  \node (node_10) at (353.39bp,60.906bp) [draw,draw=none] {$\left(6, 5\right)$};
  \node (node_18) at (306.39bp,60.906bp) [draw,draw=none] {$\left(4, 3\right)$};
  \node (node_8) at (423.39bp,8.3018bp) [draw,draw=none] {$\left(6, 1\right)$};
  \node (node_9) at (541.39bp,60.906bp) [draw,draw=none] {$\left(6, 2\right)$};
  \node (node_19) at (447.39bp,60.906bp) [draw,draw=none] {$\left(4, 1\right)$};
  \node (node_11) at (541.39bp,113.51bp) [draw,draw=none] {$\left(6, 7\right)$};
  \node (node_20) at (588.39bp,113.51bp) [draw,draw=none] {$\left(4, 2\right)$};
  \node (node_21) at (400.39bp,113.51bp) [draw,draw=none] {$\left(4, 5\right)$};
  \node (node_38) at (202.39bp,166.11bp) [draw,draw=none] {$\left(1, 5\right)$};
  \node (node_16) at (492.39bp,166.11bp) [draw,draw=none] {$\left(5, 7\right)$};
  \node (node_23) at (539.39bp,166.11bp) [draw,draw=none] {$\left(4, 7\right)$};
  \node (node_36) at (61.39bp,113.51bp) [draw,draw=none] {$\left(1, 3\right)$};
  \node (node_24) at (108.39bp,166.11bp) [draw,draw=none] {$\left(3, 4\right)$};
  \node (node_37) at (61.39bp,166.11bp) [draw,draw=none] {$\left(1, 4\right)$};
  \node (node_25) at (447.39bp,113.51bp) [draw,draw=none] {$\left(3, 1\right)$};
  \node (node_26) at (586.39bp,166.11bp) [draw,draw=none] {$\left(3, 2\right)$};
  \node (node_40) at (445.39bp,166.11bp) [draw,draw=none] {$\left(1, 2\right)$};
  \node (node_29) at (515.39bp,218.72bp) [draw,draw=none] {$\left(3, 7\right)$};
  \node (node_41) at (397.39bp,218.72bp) [draw,draw=none] {$\left(1, 7\right)$};
  \node (node_28) at (260.39bp,218.72bp) [draw,draw=none] {$\left(3, 6\right)$};
  \node (node_39) at (190.39bp,218.72bp) [draw,draw=none] {$\left(1, 6\right)$};
  \node (node_22) at (301.39bp,166.11bp) [draw,draw=none] {$\left(4, 6\right)$};
  \node (node_35) at (350.39bp,166.11bp) [draw,draw=none] {$\left(2, 7\right)$};
  \draw [black,->] (node_0) ..controls (88.829bp,23.136bp) and (83.819bp,33.7bp)  .. (node_1);
  \draw [black,->] (node_0) ..controls (129.23bp,23.937bp) and (164.35bp,39.128bp)  .. (node_4);
  \draw [black,->] (node_0) ..controls (101.68bp,23.136bp) and (106.48bp,33.7bp)  .. (node_12);
  \draw [black,->] (node_0) ..controls (74.462bp,24.218bp) and (57.235bp,36.496bp)  .. (node_30);
  \draw [black,->] (node_1) ..controls (96.524bp,77.047bp) and (117.74bp,89.826bp)  .. (node_5);
  \draw [black,->] (node_1) ..controls (81.726bp,76.042bp) and (89.922bp,87.25bp)  .. (node_13);
  \draw [black,->] (node_1) ..controls (55.042bp,76.419bp) and (41.486bp,88.454bp)  .. (node_31);
  \draw [black,->] (node_2) ..controls (347.3bp,19.596bp) and (407.77bp,35.03bp)  .. (node_3);
  \draw [black,->] (node_2) ..controls (278.05bp,24.556bp) and (253.85bp,37.589bp)  .. (node_4);
  \draw [black,->] (node_2) ..controls (334.73bp,24.556bp) and (358.94bp,37.589bp)  .. (node_14);
  \draw [black,->] (node_2) ..controls (293.12bp,23.589bp) and (282.41bp,35.125bp)  .. (node_32);
  \draw [black,->] (node_3) ..controls (494.39bp,75.437bp) and (494.39bp,85.372bp)  .. (node_15);
  \draw [black,->] (node_4) ..controls (196.04bp,76.419bp) and (182.49bp,88.454bp)  .. (node_5);
  \draw [black,->] (node_4) ..controls (256.76bp,86.526bp) and (336.26bp,130.64bp)  .. (node_27);
  \draw [black,->] (node_4) ..controls (209.7bp,75.513bp) and (207.71bp,85.604bp)  .. (node_33);
  \draw [black,->] (node_5) ..controls (183.73bp,129.76bp) and (207.94bp,142.8bp)  .. (node_17);
  \draw [black,->] (node_5) ..controls (155.39bp,128.04bp) and (155.39bp,137.98bp)  .. (node_34);
  \draw [black,->] (node_6) ..controls (182.1bp,23.136bp) and (177.3bp,33.7bp)  .. (node_7);
  \draw [black,->] (node_6) ..controls (230.92bp,22.346bp) and (293.76bp,41.618bp)  .. (node_10);
  \draw [black,->] (node_6) ..controls (167.76bp,24.218bp) and (150.77bp,36.496bp)  .. (node_12);
  \draw [black,->] (node_6) ..controls (221.89bp,23.668bp) and (257.54bp,38.955bp)  .. (node_18);
  \draw [black,->] (node_7) ..controls (149.04bp,76.419bp) and (135.49bp,88.454bp)  .. (node_13);
  \draw [black,->] (node_8) ..controls (456.89bp,23.668bp) and (492.54bp,38.955bp)  .. (node_9);
  \draw [black,->] (node_8) ..controls (402.76bp,24.218bp) and (385.77bp,36.496bp)  .. (node_10);
  \draw [black,->] (node_8) ..controls (417.1bp,23.136bp) and (412.3bp,33.7bp)  .. (node_14);
  \draw [black,->] (node_8) ..controls (429.95bp,23.136bp) and (434.96bp,33.7bp)  .. (node_19);
  \draw [black,->] (node_9) ..controls (541.39bp,75.437bp) and (541.39bp,85.372bp)  .. (node_11);
  \draw [black,->] (node_9) ..controls (528.12bp,76.193bp) and (517.41bp,87.729bp)  .. (node_15);
  \draw [black,->] (node_9) ..controls (554.66bp,76.193bp) and (565.37bp,87.729bp)  .. (node_20);
  \draw [black,->] (node_10) ..controls (370.79bp,67.307bp) and (374.2bp,68.313bp)  .. (377.39bp,69.207bp) .. controls (435.99bp,85.646bp) and (453.69bp,87.672bp)  .. (node_11);
  \draw [black,->] (node_10) ..controls (366.66bp,76.193bp) and (377.37bp,87.729bp)  .. (node_21);
  \draw [black,->] (node_10) ..controls (318.12bp,86.013bp) and (254.69bp,129.37bp)  .. (node_38);
  \draw [black,->] (node_11) ..controls (527.48bp,128.87bp) and (516.15bp,140.57bp)  .. (node_16);
  \draw [black,->] (node_11) ..controls (540.86bp,128.04bp) and (540.46bp,137.98bp)  .. (node_23);
  \draw [black,->] (node_12) ..controls (115.7bp,75.513bp) and (113.71bp,85.604bp)  .. (node_13);
  \draw [black,->] (node_12) ..controls (135.8bp,67.276bp) and (139.21bp,68.292bp)  .. (142.39bp,69.207bp) .. controls (283.29bp,109.78bp) and (322.29bp,115.99bp)  .. (node_16);
  \draw [black,->] (node_12) ..controls (102.04bp,76.419bp) and (88.486bp,88.454bp)  .. (node_36);
  \draw [black,->] (node_13) ..controls (146.54bp,128.2bp) and (194.77bp,145.51bp)  .. (node_17);
  \draw [black,->] (node_13) ..controls (108.39bp,128.04bp) and (108.39bp,137.98bp)  .. (node_24);
  \draw [black,->] (node_13) ..controls (95.12bp,128.8bp) and (84.405bp,140.33bp)  .. (node_37);
  \draw [black,->] (node_14) ..controls (428.73bp,77.16bp) and (452.94bp,90.192bp)  .. (node_15);
  \draw [black,->] (node_14) ..controls (365.12bp,86.013bp) and (301.69bp,129.37bp)  .. (node_17);
  \draw [black,->] (node_14) ..controls (413.66bp,76.193bp) and (424.37bp,87.729bp)  .. (node_25);
  \draw [black,->] (node_15) ..controls (493.86bp,128.04bp) and (493.46bp,137.98bp)  .. (node_16);
  \draw [black,->] (node_15) ..controls (522.05bp,129.73bp) and (545.59bp,142.67bp)  .. (node_26);
  \draw [black,->] (node_15) ..controls (480.48bp,128.87bp) and (469.15bp,140.57bp)  .. (node_40);
  \draw [black,->] (node_16) ..controls (498.68bp,180.95bp) and (503.48bp,191.51bp)  .. (node_29);
  \draw [black,->] (node_16) ..controls (463.75bp,182.37bp) and (439.29bp,195.4bp)  .. (node_41);
  \draw [black,->] (node_17) ..controls (252.35bp,180.72bp) and (254.54bp,190.81bp)  .. (node_28);
  \draw [black,->] (node_17) ..controls (232.26bp,181.8bp) and (218.52bp,193.59bp)  .. (node_39);
  \draw [black,->] (node_18) ..controls (334.73bp,77.16bp) and (358.94bp,90.192bp)  .. (node_21);
  \draw [black,->] (node_19) ..controls (485.54bp,75.599bp) and (533.77bp,92.906bp)  .. (node_20);
  \draw [black,->] (node_19) ..controls (434.12bp,76.193bp) and (423.41bp,87.729bp)  .. (node_21);
  \draw [black,->] (node_19) ..controls (447.39bp,75.437bp) and (447.39bp,85.372bp)  .. (node_25);
  \draw [black,->] (node_20) ..controls (574.48bp,128.87bp) and (563.15bp,140.57bp)  .. (node_23);
  \draw [black,->] (node_20) ..controls (587.86bp,128.04bp) and (587.46bp,137.98bp)  .. (node_26);
  \draw [black,->] (node_21) ..controls (370.4bp,129.84bp) and (344.57bp,143.04bp)  .. (node_22);
  \draw [black,->] (node_21) ..controls (438.38bp,128.34bp) and (485.77bp,145.59bp)  .. (node_23);
  \draw [black,->] (node_21) ..controls (399.86bp,128.04bp) and (399.46bp,137.98bp)  .. (node_27);
  \draw [black,->] (node_22) ..controls (289.88bp,181.32bp) and (280.66bp,192.7bp)  .. (node_28);
  \draw [black,->] (node_23) ..controls (532.83bp,180.95bp) and (527.82bp,191.51bp)  .. (node_29);
  \draw [black,->] (node_24) ..controls (148.49bp,180.46bp) and (203.13bp,198.65bp)  .. (node_28);
  \draw [black,->] (node_25) ..controls (485.38bp,128.34bp) and (532.77bp,145.59bp)  .. (node_26);
  \draw [black,->] (node_25) ..controls (433.48bp,128.87bp) and (422.15bp,140.57bp)  .. (node_27);
  \draw [black,->] (node_26) ..controls (565.46bp,182.03bp) and (548.23bp,194.31bp)  .. (node_29);
  \draw [black,->] (node_27) ..controls (360.8bp,180.9bp) and (314.11bp,198.02bp)  .. (node_28);
  \draw [black,->] (node_27) ..controls (432.23bp,181.75bp) and (467.35bp,196.94bp)  .. (node_29);
  \draw [black,->] (node_30) ..controls (21.701bp,75.513bp) and (19.707bp,85.604bp)  .. (node_31);
  \draw [black,->] (node_30) ..controls (41.805bp,67.265bp) and (45.212bp,68.284bp)  .. (48.39bp,69.207bp) .. controls (102.7bp,84.986bp) and (119.0bp,88.147bp)  .. (node_33);
  \draw [black,->] (node_30) ..controls (34.726bp,76.042bp) and (42.922bp,87.25bp)  .. (node_36);
  \draw [black,->] (node_31) ..controls (52.544bp,128.2bp) and (100.77bp,145.51bp)  .. (node_34);
  \draw [black,->] (node_31) ..controls (27.66bp,128.8bp) and (38.375bp,140.33bp)  .. (node_37);
  \draw [black,->] (node_32) ..controls (243.04bp,76.419bp) and (229.49bp,88.454bp)  .. (node_33);
  \draw [black,->] (node_33) ..controls (189.12bp,128.8bp) and (178.41bp,140.33bp)  .. (node_34);
  \draw [black,->] (node_33) ..controls (241.6bp,127.92bp) and (293.87bp,145.79bp)  .. (node_35);
  \draw [black,->] (node_33) ..controls (202.39bp,128.04bp) and (202.39bp,137.98bp)  .. (node_38);
  \draw [black,->] (node_34) ..controls (165.12bp,181.17bp) and (172.76bp,192.22bp)  .. (node_39);
  \draw [black,->] (node_35) ..controls (363.66bp,181.4bp) and (374.37bp,192.94bp)  .. (node_41);
  \draw [black,->] (node_36) ..controls (61.39bp,128.04bp) and (61.39bp,137.98bp)  .. (node_37);
  \draw [black,->] (node_36) ..controls (99.544bp,128.2bp) and (147.77bp,145.51bp)  .. (node_38);
  \draw [black,->] (node_37) ..controls (97.385bp,181.23bp) and (138.92bp,197.52bp)  .. (node_39);
  \draw [black,->] (node_38) ..controls (199.16bp,180.72bp) and (196.77bp,190.81bp)  .. (node_39);
  \draw [black,->] (node_38) ..controls (219.79bp,172.52bp) and (223.2bp,173.53bp)  .. (226.39bp,174.41bp) .. controls (277.89bp,188.76bp) and (338.66bp,203.68bp)  .. (node_41);
  \draw [black,->] (node_40) ..controls (431.77bp,181.48bp) and (420.67bp,193.18bp)  .. (node_41);
\end{tikzpicture} 
}

   \\  \hline 
 \(\Pqu\)  & 
\resizebox{!}{3cm}{
 \begin{tikzpicture}[>=latex,line join=bevel,]
\node (node_0) at (213.39bp,7.8452bp) [draw,draw=none] {$\hat{0}$};
  \node (node_1) at (272.39bp,59.992bp) [draw,draw=none] {$\left(2, 3\right)$};
  \node (node_4) at (96.39bp,59.992bp) [draw,draw=none] {$\left(4, 1\right)$};
  \node (node_7) at (166.39bp,59.992bp) [draw,draw=none] {$\left(6, 2\right)$};
  \node (node_13) at (366.39bp,59.992bp) [draw,draw=none] {$\left(7, 4\right)$};
  \node (node_2) at (249.39bp,112.6bp) [draw,draw=none] {$\left(1, 3\right)$};
  \node (node_3) at (343.39bp,112.6bp) [draw,draw=none] {$\left(2, 5\right)$};
  \node (node_14) at (296.39bp,112.6bp) [draw,draw=none] {$\left(2, 4\right)$};
  \node (node_16) at (260.39bp,165.2bp) [draw,draw=none] {$\left(1, 4\right)$};
  \node (node_18) at (319.39bp,165.2bp) [draw,draw=none] {$\left(2, 6\right)$};
  \node (node_5) at (14.39bp,112.6bp) [draw,draw=none] {$\left(3, 1\right)$};
  \node (node_6) at (108.39bp,112.6bp) [draw,draw=none] {$\left(4, 5\right)$};
  \node (node_8) at (61.39bp,112.6bp) [draw,draw=none] {$\left(4, 2\right)$};
  \node (node_10) at (96.39bp,165.2bp) [draw,draw=none] {$\left(3, 2\right)$};
  \node (node_12) at (143.39bp,165.2bp) [draw,draw=none] {$\left(4, 7\right)$};
  \node (node_9) at (155.39bp,112.6bp) [draw,draw=none] {$\left(5, 2\right)$};
  \node (node_11) at (202.39bp,112.6bp) [draw,draw=none] {$\left(6, 7\right)$};
  \node (node_19) at (213.39bp,217.35bp) [draw,draw=none] {$\hat{1}$};
  \node (node_15) at (390.39bp,112.6bp) [draw,draw=none] {$\left(5, 4\right)$};
  \node (node_17) at (437.39bp,112.6bp) [draw,draw=none] {$\left(7, 6\right)$};
  \draw [black,->] (node_0) ..controls (227.11bp,20.506bp) and (243.2bp,34.18bp)  .. (node_1);
  \draw [black,->] (node_0) ..controls (192.34bp,17.867bp) and (148.86bp,36.504bp)  .. (node_4);
  \draw [black,->] (node_0) ..controls (201.01bp,22.05bp) and (189.92bp,33.886bp)  .. (node_7);
  \draw [black,->] (node_0) ..controls (239.1bp,17.272bp) and (305.28bp,38.963bp)  .. (node_13);
  \draw [black,->] (node_1) ..controls (266.1bp,74.826bp) and (261.3bp,85.391bp)  .. (node_2);
  \draw [black,->] (node_1) ..controls (293.32bp,75.908bp) and (310.55bp,88.187bp)  .. (node_3);
  \draw [black,->] (node_1) ..controls (278.95bp,74.826bp) and (283.96bp,85.391bp)  .. (node_14);
  \draw [black,->] (node_2) ..controls (252.35bp,127.2bp) and (254.54bp,137.29bp)  .. (node_16);
  \draw [black,->] (node_3) ..controls (336.83bp,127.43bp) and (331.82bp,137.99bp)  .. (node_18);
  \draw [black,->] (node_4) ..controls (71.854bp,76.134bp) and (51.146bp,88.913bp)  .. (node_5);
  \draw [black,->] (node_4) ..controls (99.617bp,74.6bp) and (102.01bp,84.691bp)  .. (node_6);
  \draw [black,->] (node_4) ..controls (86.665bp,75.053bp) and (79.025bp,86.099bp)  .. (node_8);
  \draw [black,->] (node_5) ..controls (38.926bp,128.74bp) and (59.634bp,141.52bp)  .. (node_10);
  \draw [black,->] (node_6) ..controls (118.12bp,127.66bp) and (125.76bp,138.7bp)  .. (node_12);
  \draw [black,->] (node_7) ..controls (135.24bp,76.004bp) and (106.48bp,89.865bp)  .. (node_8);
  \draw [black,->] (node_7) ..controls (163.43bp,74.6bp) and (161.24bp,84.691bp)  .. (node_9);
  \draw [black,->] (node_7) ..controls (176.39bp,75.053bp) and (184.25bp,86.099bp)  .. (node_11);
  \draw [black,->] (node_8) ..controls (71.115bp,127.66bp) and (78.755bp,138.7bp)  .. (node_10);
  \draw [black,->] (node_8) ..controls (85.926bp,128.74bp) and (106.63bp,141.52bp)  .. (node_12);
  \draw [black,->] (node_9) ..controls (138.26bp,128.29bp) and (124.52bp,140.07bp)  .. (node_10);
  \draw [black,->] (node_10) ..controls (132.67bp,181.75bp) and (174.39bp,199.63bp)  .. (node_19);
  \draw [black,->] (node_11) ..controls (185.26bp,128.29bp) and (171.52bp,140.07bp)  .. (node_12);
  \draw [black,->] (node_12) ..controls (165.23bp,181.85bp) and (184.96bp,195.98bp)  .. (node_19);
  \draw [black,->] (node_13) ..controls (345.76bp,75.908bp) and (328.77bp,88.187bp)  .. (node_14);
  \draw [black,->] (node_13) ..controls (372.95bp,74.826bp) and (377.96bp,85.391bp)  .. (node_15);
  \draw [black,->] (node_13) ..controls (387.32bp,75.908bp) and (404.55bp,88.187bp)  .. (node_17);
  \draw [black,->] (node_14) ..controls (286.39bp,127.66bp) and (278.53bp,138.7bp)  .. (node_16);
  \draw [black,->] (node_14) ..controls (302.68bp,127.43bp) and (307.48bp,137.99bp)  .. (node_18);
  \draw [black,->] (node_15) ..controls (354.35bp,127.63bp) and (312.03bp,144.1bp)  .. (node_16);
  \draw [black,->] (node_16) ..controls (246.89bp,180.6bp) and (235.68bp,192.57bp)  .. (node_19);
  \draw [black,->] (node_17) ..controls (403.89bp,127.96bp) and (368.24bp,143.25bp)  .. (node_18);
  \draw [black,->] (node_18) ..controls (285.35bp,182.31bp) and (249.84bp,199.1bp)  .. (node_19);
\end{tikzpicture} 
}

\end{tabular}\end{center}
\end{mexample}
\subsection{Ordering antichains}
\label{sec:org01df65e}
\begin{definition}
Let
\[\twoanti{P} = \{(x,y) \in P \times P \mid x \parallel y\}\]
and define the following binary relations on \(\twoanti{P}\):
\begin{equation}
\label{eq-trel}
  \begin{split}
    (x,y) & \tarel (u,v) \iff x = u \text{ and } y \le v \\
    (x,y) & \tbrel (u,v) \iff y = v \text{ and } u \le x \\
    (x,y) & \terel (u,v) \iff (x,y) \tarel (u,v) \text{ or } (x,y) \tbrel (u,v)
  \end{split}
\end{equation}
\label{def-rel-antichains}
\end{definition}

\begin{theorem}
The transitive closure \(\transterel\) of the relation \(\terel\) is the restriction of \(\transerel\) to \(\twoanti{P}\);
thus \((\twoanti{P}, \transterel)\) is the induced subposet of \((\Ppairs, \transerel)\).
Furthermore, when \(P\) is neither a chain nor an antichain,
\((\twoanti{P},  \transterel)\) is isomorphic to \((\Pqu,\pqurel)\) with the top and bottom elements removed.
\label{thm-induced}
\end{theorem}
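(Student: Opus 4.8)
The plan is to establish an explicit coordinate criterion for $\transerel$ on parallel pairs, and then read off both assertions from it. The starting observation, already contained in the proof of Theorem~\ref{punc-poset}, is a monotonicity property: a single step $(x,y)\erel(u,v)$ either fixes $x$ and raises $y$ (an $\arel$-step, so $y\le v$) or fixes $y$ and lowers $x$ (a $\brel$-step, so $u\le x$). Consequently, along \emph{any} $\transerel$-chain from $(x,y)$ to $(u,v)$ --- irrespective of whether its intermediate pairs are parallel --- the first coordinate weakly decreases and the second weakly increases, giving $u\le x$ and $y\le v$. Since $\terel\subseteq\erel$, this already yields the easy inclusion $\transterel\subseteq\transerel$ restricted to $\twoanti{P}$, and it supplies the necessary condition for the converse.

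For the converse I would prove the sharper claim that, for parallel pairs $a=(x,y)$ and $b=(u,v)$, the inequalities $u\le x$ and $y\le v$ already force $a\transterel b$. The key point --- and the one small piece of genuine work --- is that the ``mixed'' pair $(x,v)$ is again incomparable. Indeed $x=v$ would give $y\le v=x$, and $x>v$ would give $x>v\ge y$, each contradicting $x\parallel y$; while $x<v$ together with $u\le x$ would give $u<v$, contradicting $u\parallel v$. Hence $x\parallel v$, so $(x,y)\tarel(x,v)\tbrel(u,v)$ is a two-step chain entirely inside $\twoanti{P}$ and witnesses $a\transterel b$. Combining the two inclusions shows that $(\twoanti{P},\transterel)$ is the induced subposet of $(\Ppairs,\transerel)$ on $\twoanti{P}$, and in passing we obtain the clean description $a\transterel b\iff(u\le x\text{ and }y\le v)$ for parallel $a,b$.

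For the second assertion I would first match the underlying sets: the middle blocks of the partition $\sigma$ of Definition~\ref{def-rel-antichains}'s ambient construction are precisely the singletons $\{(x,y)\}$ with $x\parallel y$, so $a\mapsto\{a\}$ identifies $\twoanti{P}$ with $\Pqu\setminus\{\hat{0},\hat{1}\}$, and Lemma~\ref{lemma-quotient-poset} guarantees --- under the hypothesis that $P$ is neither a chain nor an antichain --- that $\hat{0}$ and $\hat{1}$ exist and are distinct from every middle block. The delicate point is that the quotient order $\pqurel$ is the \emph{transitive closure} of the generating relation $\le_{\sigma}$ of Definition~\ref{def-set-partition-compatible}, so a priori it could relate two middle singletons by routing through $\hat{0}$ or $\hat{1}$. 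I would exclude this once more via the monotonicity observation: if a parallel pair $(x,y)$ satisfied $(x,y)\transerel(p,q)$ with $p>q$, then $y\le q<p\le x$ would force $y<x$, so no middle singleton is $\le_{\sigma}$-below $\hat{0}$; dually, no middle singleton is $\le_{\sigma}$-above $\hat{1}$. Since $\hat{0}$ is the minimum and $\hat{1}$ the maximum, a generating chain from one middle singleton to another can neither dip to $\hat{0}$ nor pass through $\hat{1}$, hence stays entirely among middle singletons, where $\le_{\sigma}$ is just $\transerel$; by the coordinate description this collapses to a single relation $a\transterel b$, and conversely $a\transterel b$ gives $\{a\}\pqurel\{b\}$. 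Thus $a\mapsto\{a\}$ is the desired order isomorphism.

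The main obstacle is concentrated in two short places, both dispatched by the single monotonicity principle: the incomparability check that legalizes the two-step chain in the first part, and the verification in the quotient step that no new relation can be manufactured through the extreme blocks $\hat{0}$ and $\hat{1}$. Everything else is formal, so I expect the write-up to be brief once the coordinate criterion $a\transterel b\iff(u\le x\text{ and }y\le v)$ is in hand.
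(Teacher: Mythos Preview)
Your proof is correct and considerably more thorough than the paper's. The paper's argument is essentially a three-sentence sketch: it asserts that the middle blocks of $\Pqu$ carry ``the induced order'' from $\Ppairs$ and that $\twoanti{P}$ consists of the same elements ``with the induced order'', then concludes. It does not verify the first assertion of the theorem at all --- that a $\transerel$-chain between two parallel pairs can be replaced by a $\terel$-chain --- nor does it check that the quotient order on middle singletons is not inflated by paths through $\hat{0}$ or $\hat{1}$.

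You supply both of these missing verifications via the clean coordinate criterion $(x,y)\transterel(u,v)\iff u\le x\text{ and }y\le v$ for parallel pairs, which does not appear in the paper. The incomparability check for the intermediate pair $(x,v)$ is exactly the right device for the first part, and your monotonicity observation neatly dispatches the routing-through-extremes worry in the second. What the paper's terse treatment buys is brevity; what yours buys is an actual proof, together with a reusable explicit description of $\transterel$ that could be handy elsewhere in the paper (e.g.\ when analysing the order polytope $\orderpolytope{\twoanti{P}}$).
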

\begin{proof}
The ``bottom block'' of \(\Ppairs\) consists of ordered pairs \((x,y)\) with \(x \le y\),
the ``top block''  consists of ordered pairs \((x,y)\) with \(x \ge y\),
and the ``middle block''  consists of ordered pairs \((x,y)\) with \(x \parallel y\).

In \(\twoanti{P}\) there are only the elements of the middle block, with the induced order.

In \(\Pqu\) there are the elements of the middle block, with the induced order,
as well as the class of the top block, an element which is above all elements in the middle block,
and also the class of the bottom block, which is below all elements of the middle block.
The ordering of the elements in the middle block is once again induced from \(\Ppairs\).

Hence \(\twoanti{P}\) with \(\hat{0},\hat{1}\) adjoined is isomorphic to \(\Pqu\);
equivalently,
\[\Pqu \setminus \{\hat{0}, \hat{1}\} \simeq \twoanti{P}.\]
\end{proof}
\subsubsection{Examples}
\label{sec:org304a1f8}
\begin{mexample}
We continue the previous example, where \(P=C_{3} \times C_{3}\) with the top and bottom elements removed.
\phantomsection
\label{}
\begin{center}\begin{tabular}{p{1cm}|c}
 \(P\)  & 
\resizebox{!}{2cm}{
 \begin{tikzpicture}[>=latex,line join=bevel,]
\node (node_0) at (20.811bp,6.5307bp) [draw,draw=none] {$3$};
  \node (node_1) at (5.8112bp,55.592bp) [draw,draw=none] {$4$};
  \node (node_4) at (35.811bp,55.592bp) [draw,draw=none] {$5$};
  \node (node_5) at (20.811bp,104.65bp) [draw,draw=none] {$6$};
  \node (node_2) at (50.811bp,6.5307bp) [draw,draw=none] {$1$};
  \node (node_3) at (65.811bp,55.592bp) [draw,draw=none] {$2$};
  \node (node_6) at (57.811bp,104.65bp) [draw,draw=none] {$7$};
  \draw [black,->] (node_0) ..controls (17.124bp,19.099bp) and (13.812bp,29.49bp)  .. (node_1);
  \draw [black,->] (node_0) ..controls (24.499bp,19.099bp) and (27.81bp,29.49bp)  .. (node_4);
  \draw [black,->] (node_1) ..controls (9.4985bp,68.161bp) and (12.81bp,78.552bp)  .. (node_5);
  \draw [black,->] (node_2) ..controls (54.499bp,19.099bp) and (57.81bp,29.49bp)  .. (node_3);
  \draw [black,->] (node_2) ..controls (47.124bp,19.099bp) and (43.812bp,29.49bp)  .. (node_4);
  \draw [black,->] (node_3) ..controls (63.857bp,68.09bp) and (62.119bp,78.313bp)  .. (node_6);
  \draw [black,->] (node_4) ..controls (32.124bp,68.161bp) and (28.812bp,78.552bp)  .. (node_5);
  \draw [black,->] (node_4) ..controls (41.286bp,68.303bp) and (46.301bp,79.031bp)  .. (node_6);
\end{tikzpicture} 
}

   \\  \hline  
 \(\Pqu\)  & 
\resizebox{!}{2cm}{
 \begin{tikzpicture}[>=latex,line join=bevel,]
\node (node_0) at (213.39bp,7.8452bp) [draw,draw=none] {$\hat{0}$};
  \node (node_1) at (272.39bp,59.992bp) [draw,draw=none] {$\left(2, 3\right)$};
  \node (node_4) at (96.39bp,59.992bp) [draw,draw=none] {$\left(4, 1\right)$};
  \node (node_7) at (166.39bp,59.992bp) [draw,draw=none] {$\left(6, 2\right)$};
  \node (node_13) at (366.39bp,59.992bp) [draw,draw=none] {$\left(7, 4\right)$};
  \node (node_2) at (249.39bp,112.6bp) [draw,draw=none] {$\left(1, 3\right)$};
  \node (node_3) at (343.39bp,112.6bp) [draw,draw=none] {$\left(2, 5\right)$};
  \node (node_14) at (296.39bp,112.6bp) [draw,draw=none] {$\left(2, 4\right)$};
  \node (node_16) at (260.39bp,165.2bp) [draw,draw=none] {$\left(1, 4\right)$};
  \node (node_18) at (319.39bp,165.2bp) [draw,draw=none] {$\left(2, 6\right)$};
  \node (node_5) at (14.39bp,112.6bp) [draw,draw=none] {$\left(3, 1\right)$};
  \node (node_6) at (108.39bp,112.6bp) [draw,draw=none] {$\left(4, 5\right)$};
  \node (node_8) at (61.39bp,112.6bp) [draw,draw=none] {$\left(4, 2\right)$};
  \node (node_10) at (96.39bp,165.2bp) [draw,draw=none] {$\left(3, 2\right)$};
  \node (node_12) at (143.39bp,165.2bp) [draw,draw=none] {$\left(4, 7\right)$};
  \node (node_9) at (155.39bp,112.6bp) [draw,draw=none] {$\left(5, 2\right)$};
  \node (node_11) at (202.39bp,112.6bp) [draw,draw=none] {$\left(6, 7\right)$};
  \node (node_19) at (213.39bp,217.35bp) [draw,draw=none] {$\hat{1}$};
  \node (node_15) at (390.39bp,112.6bp) [draw,draw=none] {$\left(5, 4\right)$};
  \node (node_17) at (437.39bp,112.6bp) [draw,draw=none] {$\left(7, 6\right)$};
  \draw [black,->] (node_0) ..controls (227.11bp,20.506bp) and (243.2bp,34.18bp)  .. (node_1);
  \draw [black,->] (node_0) ..controls (192.34bp,17.867bp) and (148.86bp,36.504bp)  .. (node_4);
  \draw [black,->] (node_0) ..controls (201.01bp,22.05bp) and (189.92bp,33.886bp)  .. (node_7);
  \draw [black,->] (node_0) ..controls (239.1bp,17.272bp) and (305.28bp,38.963bp)  .. (node_13);
  \draw [black,->] (node_1) ..controls (266.1bp,74.826bp) and (261.3bp,85.391bp)  .. (node_2);
  \draw [black,->] (node_1) ..controls (293.32bp,75.908bp) and (310.55bp,88.187bp)  .. (node_3);
  \draw [black,->] (node_1) ..controls (278.95bp,74.826bp) and (283.96bp,85.391bp)  .. (node_14);
  \draw [black,->] (node_2) ..controls (252.35bp,127.2bp) and (254.54bp,137.29bp)  .. (node_16);
  \draw [black,->] (node_3) ..controls (336.83bp,127.43bp) and (331.82bp,137.99bp)  .. (node_18);
  \draw [black,->] (node_4) ..controls (71.854bp,76.134bp) and (51.146bp,88.913bp)  .. (node_5);
  \draw [black,->] (node_4) ..controls (99.617bp,74.6bp) and (102.01bp,84.691bp)  .. (node_6);
  \draw [black,->] (node_4) ..controls (86.665bp,75.053bp) and (79.025bp,86.099bp)  .. (node_8);
  \draw [black,->] (node_5) ..controls (38.926bp,128.74bp) and (59.634bp,141.52bp)  .. (node_10);
  \draw [black,->] (node_6) ..controls (118.12bp,127.66bp) and (125.76bp,138.7bp)  .. (node_12);
  \draw [black,->] (node_7) ..controls (135.24bp,76.004bp) and (106.48bp,89.865bp)  .. (node_8);
  \draw [black,->] (node_7) ..controls (163.43bp,74.6bp) and (161.24bp,84.691bp)  .. (node_9);
  \draw [black,->] (node_7) ..controls (176.39bp,75.053bp) and (184.25bp,86.099bp)  .. (node_11);
  \draw [black,->] (node_8) ..controls (71.115bp,127.66bp) and (78.755bp,138.7bp)  .. (node_10);
  \draw [black,->] (node_8) ..controls (85.926bp,128.74bp) and (106.63bp,141.52bp)  .. (node_12);
  \draw [black,->] (node_9) ..controls (138.26bp,128.29bp) and (124.52bp,140.07bp)  .. (node_10);
  \draw [black,->] (node_10) ..controls (132.67bp,181.75bp) and (174.39bp,199.63bp)  .. (node_19);
  \draw [black,->] (node_11) ..controls (185.26bp,128.29bp) and (171.52bp,140.07bp)  .. (node_12);
  \draw [black,->] (node_12) ..controls (165.23bp,181.85bp) and (184.96bp,195.98bp)  .. (node_19);
  \draw [black,->] (node_13) ..controls (345.76bp,75.908bp) and (328.77bp,88.187bp)  .. (node_14);
  \draw [black,->] (node_13) ..controls (372.95bp,74.826bp) and (377.96bp,85.391bp)  .. (node_15);
  \draw [black,->] (node_13) ..controls (387.32bp,75.908bp) and (404.55bp,88.187bp)  .. (node_17);
  \draw [black,->] (node_14) ..controls (286.39bp,127.66bp) and (278.53bp,138.7bp)  .. (node_16);
  \draw [black,->] (node_14) ..controls (302.68bp,127.43bp) and (307.48bp,137.99bp)  .. (node_18);
  \draw [black,->] (node_15) ..controls (354.35bp,127.63bp) and (312.03bp,144.1bp)  .. (node_16);
  \draw [black,->] (node_16) ..controls (246.89bp,180.6bp) and (235.68bp,192.57bp)  .. (node_19);
  \draw [black,->] (node_17) ..controls (403.89bp,127.96bp) and (368.24bp,143.25bp)  .. (node_18);
  \draw [black,->] (node_18) ..controls (285.35bp,182.31bp) and (249.84bp,199.1bp)  .. (node_19);
\end{tikzpicture} 
}

   \\  \hline  
 \(\twoanti{P}\)  & 
\resizebox{!}{2cm}{
 \begin{tikzpicture}[>=latex,line join=bevel,]
\node (node_0) at (155.39bp,8.3018bp) [draw,draw=none] {$\left(7, 4\right)$};
  \node (node_1) at (155.39bp,60.906bp) [draw,draw=none] {$\left(7, 6\right)$};
  \node (node_4) at (202.39bp,60.906bp) [draw,draw=none] {$\left(5, 4\right)$};
  \node (node_13) at (61.39bp,60.906bp) [draw,draw=none] {$\left(2, 4\right)$};
  \node (node_15) at (61.39bp,113.51bp) [draw,draw=none] {$\left(2, 6\right)$};
  \node (node_2) at (390.39bp,8.3018bp) [draw,draw=none] {$\left(6, 2\right)$};
  \node (node_3) at (390.39bp,60.906bp) [draw,draw=none] {$\left(6, 7\right)$};
  \node (node_5) at (437.39bp,60.906bp) [draw,draw=none] {$\left(5, 2\right)$};
  \node (node_7) at (296.39bp,60.906bp) [draw,draw=none] {$\left(4, 2\right)$};
  \node (node_9) at (296.39bp,113.51bp) [draw,draw=none] {$\left(4, 7\right)$};
  \node (node_17) at (108.39bp,113.51bp) [draw,draw=none] {$\left(1, 4\right)$};
  \node (node_11) at (343.39bp,113.51bp) [draw,draw=none] {$\left(3, 2\right)$};
  \node (node_6) at (296.39bp,8.3018bp) [draw,draw=none] {$\left(4, 1\right)$};
  \node (node_8) at (249.39bp,60.906bp) [draw,draw=none] {$\left(4, 5\right)$};
  \node (node_10) at (343.39bp,60.906bp) [draw,draw=none] {$\left(3, 1\right)$};
  \node (node_12) at (61.39bp,8.3018bp) [draw,draw=none] {$\left(2, 3\right)$};
  \node (node_14) at (14.39bp,60.906bp) [draw,draw=none] {$\left(2, 5\right)$};
  \node (node_16) at (108.39bp,60.906bp) [draw,draw=none] {$\left(1, 3\right)$};
  \draw [black,->] (node_0) ..controls (155.39bp,22.834bp) and (155.39bp,32.769bp)  .. (node_1);
  \draw [black,->] (node_0) ..controls (168.66bp,23.589bp) and (179.37bp,35.125bp)  .. (node_4);
  \draw [black,->] (node_0) ..controls (127.05bp,24.556bp) and (102.85bp,37.589bp)  .. (node_13);
  \draw [black,->] (node_1) ..controls (127.05bp,77.16bp) and (102.85bp,90.192bp)  .. (node_15);
  \draw [black,->] (node_2) ..controls (390.39bp,22.834bp) and (390.39bp,32.769bp)  .. (node_3);
  \draw [black,->] (node_2) ..controls (403.66bp,23.589bp) and (414.37bp,35.125bp)  .. (node_5);
  \draw [black,->] (node_2) ..controls (362.05bp,24.556bp) and (337.85bp,37.589bp)  .. (node_7);
  \draw [black,->] (node_3) ..controls (362.05bp,77.16bp) and (337.85bp,90.192bp)  .. (node_9);
  \draw [black,->] (node_4) ..controls (174.05bp,77.16bp) and (149.85bp,90.192bp)  .. (node_17);
  \draw [black,->] (node_5) ..controls (409.05bp,77.16bp) and (384.85bp,90.192bp)  .. (node_11);
  \draw [black,->] (node_6) ..controls (296.39bp,22.834bp) and (296.39bp,32.769bp)  .. (node_7);
  \draw [black,->] (node_6) ..controls (283.12bp,23.589bp) and (272.41bp,35.125bp)  .. (node_8);
  \draw [black,->] (node_6) ..controls (309.66bp,23.589bp) and (320.37bp,35.125bp)  .. (node_10);
  \draw [black,->] (node_7) ..controls (296.39bp,75.437bp) and (296.39bp,85.372bp)  .. (node_9);
  \draw [black,->] (node_7) ..controls (309.66bp,76.193bp) and (320.37bp,87.729bp)  .. (node_11);
  \draw [black,->] (node_8) ..controls (262.66bp,76.193bp) and (273.37bp,87.729bp)  .. (node_9);
  \draw [black,->] (node_10) ..controls (343.39bp,75.437bp) and (343.39bp,85.372bp)  .. (node_11);
  \draw [black,->] (node_12) ..controls (61.39bp,22.834bp) and (61.39bp,32.769bp)  .. (node_13);
  \draw [black,->] (node_12) ..controls (48.12bp,23.589bp) and (37.405bp,35.125bp)  .. (node_14);
  \draw [black,->] (node_12) ..controls (74.66bp,23.589bp) and (85.375bp,35.125bp)  .. (node_16);
  \draw [black,->] (node_13) ..controls (61.39bp,75.437bp) and (61.39bp,85.372bp)  .. (node_15);
  \draw [black,->] (node_13) ..controls (74.66bp,76.193bp) and (85.375bp,87.729bp)  .. (node_17);
  \draw [black,->] (node_14) ..controls (27.66bp,76.193bp) and (38.375bp,87.729bp)  .. (node_15);
  \draw [black,->] (node_16) ..controls (108.39bp,75.437bp) and (108.39bp,85.372bp)  .. (node_17);
\end{tikzpicture} 
}

\end{tabular}\end{center}
\end{mexample}

\begin{mexample}
Another example, where \(P=C_{2} \times C_{2} \times C_{2}\):
\phantomsection
\label{}
\begin{center}\begin{tabular}{p{1cm}c|c}
 & \(P\) & \(\Pqu\)  \\
 &
\resizebox{!}{3cm}{
 \begin{tikzpicture}[>=latex,line join=bevel,]
\node (node_0) at (35.811bp,6.5307bp) [draw,draw=none] {$1$};
  \node (node_1) at (5.8112bp,55.592bp) [draw,draw=none] {$2$};
  \node (node_2) at (35.811bp,55.592bp) [draw,draw=none] {$3$};
  \node (node_4) at (65.811bp,55.592bp) [draw,draw=none] {$5$};
  \node (node_3) at (5.8112bp,104.65bp) [draw,draw=none] {$4$};
  \node (node_5) at (35.811bp,104.65bp) [draw,draw=none] {$6$};
  \node (node_6) at (65.811bp,104.65bp) [draw,draw=none] {$7$};
  \node (node_7) at (35.811bp,153.71bp) [draw,draw=none] {$8$};
  \draw [black,->] (node_0) ..controls (28.256bp,19.383bp) and (21.198bp,30.455bp)  .. (node_1);
  \draw [black,->] (node_0) ..controls (35.811bp,19.029bp) and (35.811bp,29.252bp)  .. (node_2);
  \draw [black,->] (node_0) ..controls (43.367bp,19.383bp) and (50.424bp,30.455bp)  .. (node_4);
  \draw [black,->] (node_1) ..controls (5.8112bp,68.09bp) and (5.8112bp,78.313bp)  .. (node_3);
  \draw [black,->] (node_1) ..controls (13.367bp,68.445bp) and (20.424bp,79.516bp)  .. (node_5);
  \draw [black,->] (node_2) ..controls (28.256bp,68.445bp) and (21.198bp,79.516bp)  .. (node_3);
  \draw [black,->] (node_2) ..controls (43.367bp,68.445bp) and (50.424bp,79.516bp)  .. (node_6);
  \draw [black,->] (node_3) ..controls (13.367bp,117.51bp) and (20.424bp,128.58bp)  .. (node_7);
  \draw [black,->] (node_4) ..controls (58.256bp,68.445bp) and (51.198bp,79.516bp)  .. (node_5);
  \draw [black,->] (node_4) ..controls (65.811bp,68.09bp) and (65.811bp,78.313bp)  .. (node_6);
  \draw [black,->] (node_5) ..controls (35.811bp,117.15bp) and (35.811bp,127.37bp)  .. (node_7);
  \draw [black,->] (node_6) ..controls (58.256bp,117.51bp) and (51.198bp,128.58bp)  .. (node_7);
\end{tikzpicture}
}

  &
\resizebox{!}{3cm}{
 \begin{tikzpicture}[>=latex,line join=bevel,]
\node (node_0) at (296.39bp,7.8452bp) [draw,draw=none] {$\hat{0}$};
  \node (node_1) at (296.39bp,59.992bp) [draw,draw=none] {$\left(4, 5\right)$};
  \node (node_6) at (155.39bp,59.992bp) [draw,draw=none] {$\left(6, 3\right)$};
  \node (node_12) at (437.39bp,59.992bp) [draw,draw=none] {$\left(7, 2\right)$};
  \node (node_2) at (108.39bp,112.6bp) [draw,draw=none] {$\left(2, 5\right)$};
  \node (node_3) at (343.39bp,112.6bp) [draw,draw=none] {$\left(3, 5\right)$};
  \node (node_4) at (390.39bp,112.6bp) [draw,draw=none] {$\left(4, 6\right)$};
  \node (node_5) at (249.39bp,112.6bp) [draw,draw=none] {$\left(4, 7\right)$};
  \node (node_11) at (108.39bp,165.2bp) [draw,draw=none] {$\left(2, 7\right)$};
  \node (node_18) at (390.39bp,165.2bp) [draw,draw=none] {$\left(3, 6\right)$};
  \node (node_7) at (14.39bp,112.6bp) [draw,draw=none] {$\left(2, 3\right)$};
  \node (node_8) at (155.39bp,112.6bp) [draw,draw=none] {$\left(5, 3\right)$};
  \node (node_9) at (202.39bp,112.6bp) [draw,draw=none] {$\left(6, 4\right)$};
  \node (node_10) at (61.39bp,112.6bp) [draw,draw=none] {$\left(6, 7\right)$};
  \node (node_16) at (249.39bp,165.2bp) [draw,draw=none] {$\left(5, 4\right)$};
  \node (node_19) at (249.39bp,217.35bp) [draw,draw=none] {$\hat{1}$};
  \node (node_13) at (484.39bp,112.6bp) [draw,draw=none] {$\left(3, 2\right)$};
  \node (node_14) at (296.39bp,112.6bp) [draw,draw=none] {$\left(5, 2\right)$};
  \node (node_15) at (437.39bp,112.6bp) [draw,draw=none] {$\left(7, 4\right)$};
  \node (node_17) at (531.39bp,112.6bp) [draw,draw=none] {$\left(7, 6\right)$};
  \draw [black,->] (node_0) ..controls (296.39bp,21.751bp) and (296.39bp,31.682bp)  .. (node_1);
  \draw [black,->] (node_0) ..controls (272.35bp,17.397bp) and (214.09bp,38.115bp)  .. (node_6);
  \draw [black,->] (node_0) ..controls (320.43bp,17.397bp) and (378.69bp,38.115bp)  .. (node_12);
  \draw [black,->] (node_1) ..controls (255.48bp,71.287bp) and (195.01bp,86.72bp)  .. (node_2);
  \draw [black,->] (node_1) ..controls (309.66bp,75.28bp) and (320.37bp,86.816bp)  .. (node_3);
  \draw [black,->] (node_1) ..controls (324.73bp,76.247bp) and (348.94bp,89.279bp)  .. (node_4);
  \draw [black,->] (node_1) ..controls (283.12bp,75.28bp) and (272.41bp,86.816bp)  .. (node_5);
  \draw [black,->] (node_2) ..controls (108.39bp,127.13bp) and (108.39bp,137.06bp)  .. (node_11);
  \draw [black,->] (node_3) ..controls (356.66bp,127.88bp) and (367.37bp,139.42bp)  .. (node_18);
  \draw [black,->] (node_4) ..controls (390.39bp,127.13bp) and (390.39bp,137.06bp)  .. (node_18);
  \draw [black,->] (node_5) ..controls (211.24bp,127.29bp) and (163.01bp,144.6bp)  .. (node_11);
  \draw [black,->] (node_6) ..controls (117.24bp,74.685bp) and (69.01bp,91.993bp)  .. (node_7);
  \draw [black,->] (node_6) ..controls (155.39bp,74.524bp) and (155.39bp,84.459bp)  .. (node_8);
  \draw [black,->] (node_6) ..controls (168.66bp,75.28bp) and (179.37bp,86.816bp)  .. (node_9);
  \draw [black,->] (node_6) ..controls (127.05bp,76.247bp) and (102.85bp,89.279bp)  .. (node_10);
  \draw [black,->] (node_7) ..controls (42.726bp,128.85bp) and (66.935bp,141.88bp)  .. (node_11);
  \draw [black,->] (node_8) ..controls (183.73bp,128.85bp) and (207.94bp,141.88bp)  .. (node_16);
  \draw [black,->] (node_9) ..controls (215.66bp,127.88bp) and (226.37bp,139.42bp)  .. (node_16);
  \draw [black,->] (node_10) ..controls (74.66bp,127.88bp) and (85.375bp,139.42bp)  .. (node_11);
  \draw [black,->] (node_11) ..controls (149.04bp,180.66bp) and (205.11bp,200.6bp)  .. (node_19);
  \draw [black,->] (node_12) ..controls (450.66bp,75.28bp) and (461.37bp,86.816bp)  .. (node_13);
  \draw [black,->] (node_12) ..controls (399.24bp,74.685bp) and (351.01bp,91.993bp)  .. (node_14);
  \draw [black,->] (node_12) ..controls (437.39bp,74.524bp) and (437.39bp,84.459bp)  .. (node_15);
  \draw [black,->] (node_12) ..controls (465.73bp,76.247bp) and (489.94bp,89.279bp)  .. (node_17);
  \draw [black,->] (node_13) ..controls (456.05bp,128.85bp) and (431.85bp,141.88bp)  .. (node_18);
  \draw [black,->] (node_14) ..controls (283.12bp,127.88bp) and (272.41bp,139.42bp)  .. (node_16);
  \draw [black,->] (node_15) ..controls (419.98bp,118.98bp) and (416.57bp,119.99bp)  .. (413.39bp,120.9bp) .. controls (364.41bp,134.81bp) and (306.78bp,149.68bp)  .. (node_16);
  \draw [black,->] (node_16) ..controls (249.39bp,179.68bp) and (249.39bp,189.68bp)  .. (node_19);
  \draw [black,->] (node_17) ..controls (493.24bp,127.29bp) and (445.01bp,144.6bp)  .. (node_18);
  \draw [black,->] (node_18) ..controls (349.74bp,180.66bp) and (293.67bp,200.6bp)  .. (node_19);
\end{tikzpicture}
}

   \\  \hline
 \(\twoanti{P}\)  &
 \multicolumn{2}{c}{
\resizebox{!}{2.5cm}{
 \begin{tikzpicture}[>=latex,line join=bevel,]
\node (node_0) at (460.39bp,8.3018bp) [draw,draw=none] {$\left(7, 2\right)$};
  \node (node_1) at (296.39bp,60.906bp) [draw,draw=none] {$\left(7, 4\right)$};
  \node (node_2) at (484.39bp,60.906bp) [draw,draw=none] {$\left(7, 6\right)$};
  \node (node_7) at (437.39bp,60.906bp) [draw,draw=none] {$\left(5, 2\right)$};
  \node (node_13) at (531.39bp,60.906bp) [draw,draw=none] {$\left(3, 2\right)$};
  \node (node_8) at (249.39bp,113.51bp) [draw,draw=none] {$\left(5, 4\right)$};
  \node (node_14) at (437.39bp,113.51bp) [draw,draw=none] {$\left(3, 6\right)$};
  \node (node_3) at (108.39bp,8.3018bp) [draw,draw=none] {$\left(6, 3\right)$};
  \node (node_4) at (155.39bp,60.906bp) [draw,draw=none] {$\left(6, 4\right)$};
  \node (node_5) at (14.39bp,60.906bp) [draw,draw=none] {$\left(6, 7\right)$};
  \node (node_6) at (202.39bp,60.906bp) [draw,draw=none] {$\left(5, 3\right)$};
  \node (node_16) at (61.39bp,60.906bp) [draw,draw=none] {$\left(2, 3\right)$};
  \node (node_17) at (85.39bp,113.51bp) [draw,draw=none] {$\left(2, 7\right)$};
  \node (node_9) at (296.39bp,8.3018bp) [draw,draw=none] {$\left(4, 5\right)$};
  \node (node_10) at (343.39bp,60.906bp) [draw,draw=none] {$\left(4, 6\right)$};
  \node (node_11) at (108.39bp,60.906bp) [draw,draw=none] {$\left(4, 7\right)$};
  \node (node_12) at (390.39bp,60.906bp) [draw,draw=none] {$\left(3, 5\right)$};
  \node (node_15) at (249.39bp,60.906bp) [draw,draw=none] {$\left(2, 5\right)$};
  \draw [black,->] (node_0) ..controls (418.26bp,22.302bp) and (356.28bp,41.426bp)  .. (node_1);
  \draw [black,->] (node_0) ..controls (466.95bp,23.136bp) and (471.96bp,33.7bp)  .. (node_2);
  \draw [black,->] (node_0) ..controls (454.1bp,23.136bp) and (449.3bp,33.7bp)  .. (node_7);
  \draw [black,->] (node_0) ..controls (481.32bp,24.218bp) and (498.55bp,36.496bp)  .. (node_13);
  \draw [black,->] (node_1) ..controls (283.12bp,76.193bp) and (272.41bp,87.729bp)  .. (node_8);
  \draw [black,->] (node_2) ..controls (471.12bp,76.193bp) and (460.41bp,87.729bp)  .. (node_14);
  \draw [black,->] (node_3) ..controls (121.66bp,23.589bp) and (132.37bp,35.125bp)  .. (node_4);
  \draw [black,->] (node_3) ..controls (80.054bp,24.556bp) and (55.845bp,37.589bp)  .. (node_5);
  \draw [black,->] (node_3) ..controls (136.73bp,24.556bp) and (160.94bp,37.589bp)  .. (node_6);
  \draw [black,->] (node_3) ..controls (95.12bp,23.589bp) and (84.405bp,35.125bp)  .. (node_16);
  \draw [black,->] (node_4) ..controls (183.73bp,77.16bp) and (207.94bp,90.192bp)  .. (node_8);
  \draw [black,->] (node_5) ..controls (35.318bp,76.822bp) and (52.546bp,89.1bp)  .. (node_17);
  \draw [black,->] (node_6) ..controls (215.66bp,76.193bp) and (226.37bp,87.729bp)  .. (node_8);
  \draw [black,->] (node_7) ..controls (419.98bp,67.292bp) and (416.57bp,68.303bp)  .. (413.39bp,69.207bp) .. controls (364.41bp,83.12bp) and (306.78bp,97.993bp)  .. (node_8);
  \draw [black,->] (node_9) ..controls (309.66bp,23.589bp) and (320.37bp,35.125bp)  .. (node_10);
  \draw [black,->] (node_9) ..controls (255.48bp,19.596bp) and (195.01bp,35.03bp)  .. (node_11);
  \draw [black,->] (node_9) ..controls (324.73bp,24.556bp) and (348.94bp,37.589bp)  .. (node_12);
  \draw [black,->] (node_9) ..controls (283.12bp,23.589bp) and (272.41bp,35.125bp)  .. (node_15);
  \draw [black,->] (node_10) ..controls (371.73bp,77.16bp) and (395.94bp,90.192bp)  .. (node_14);
  \draw [black,->] (node_11) ..controls (102.1bp,75.74bp) and (97.301bp,86.304bp)  .. (node_17);
  \draw [black,->] (node_12) ..controls (403.66bp,76.193bp) and (414.37bp,87.729bp)  .. (node_14);
  \draw [black,->] (node_13) ..controls (503.05bp,77.16bp) and (478.85bp,90.192bp)  .. (node_14);
  \draw [black,->] (node_15) ..controls (207.26bp,74.906bp) and (145.28bp,94.029bp)  .. (node_17);
  \draw [black,->] (node_16) ..controls (67.951bp,75.74bp) and (72.962bp,86.304bp)  .. (node_17);
\end{tikzpicture}
}

}
\end{tabular}\end{center}

This examples illustrates that while the edges of the poset \(\twoanti{P}\) are partitioned
into two parts, the poset itself need not be disconnnected. In this example, selecting the elements \((i,j)\) with
\(i < j\) and forming the induced subposet of \(\twoanti{P}\) yields
\phantomsection
\label{}
\begin{center}
\resizebox{!}{2.5cm}{
 \begin{tikzpicture}[>=latex,line join=bevel,]
\node (node_0) at (14.39bp,60.906bp) [draw,draw=none] {$\left(6, 7\right)$};
  \node (node_8) at (85.39bp,113.51bp) [draw,draw=none] {$\left(2, 7\right)$};
  \node (node_1) at (155.39bp,8.3018bp) [draw,draw=none] {$\left(4, 5\right)$};
  \node (node_2) at (202.39bp,60.906bp) [draw,draw=none] {$\left(4, 6\right)$};
  \node (node_3) at (61.39bp,60.906bp) [draw,draw=none] {$\left(4, 7\right)$};
  \node (node_4) at (249.39bp,60.906bp) [draw,draw=none] {$\left(3, 5\right)$};
  \node (node_6) at (108.39bp,60.906bp) [draw,draw=none] {$\left(2, 5\right)$};
  \node (node_5) at (225.39bp,113.51bp) [draw,draw=none] {$\left(3, 6\right)$};
  \node (node_7) at (155.39bp,60.906bp) [draw,draw=none] {$\left(2, 3\right)$};
  \draw [black,->] (node_0) ..controls (35.318bp,76.822bp) and (52.546bp,89.1bp)  .. (node_8);
  \draw [black,->] (node_1) ..controls (168.66bp,23.589bp) and (179.37bp,35.125bp)  .. (node_2);
  \draw [black,->] (node_1) ..controls (127.05bp,24.556bp) and (102.85bp,37.589bp)  .. (node_3);
  \draw [black,->] (node_1) ..controls (183.73bp,24.556bp) and (207.94bp,37.589bp)  .. (node_4);
  \draw [black,->] (node_1) ..controls (142.12bp,23.589bp) and (131.41bp,35.125bp)  .. (node_6);
  \draw [black,->] (node_2) ..controls (208.68bp,75.74bp) and (213.48bp,86.304bp)  .. (node_5);
  \draw [black,->] (node_3) ..controls (67.951bp,75.74bp) and (72.962bp,86.304bp)  .. (node_8);
  \draw [black,->] (node_4) ..controls (242.83bp,75.74bp) and (237.82bp,86.304bp)  .. (node_5);
  \draw [black,->] (node_6) ..controls (102.1bp,75.74bp) and (97.301bp,86.304bp)  .. (node_8);
  \draw [black,->] (node_7) ..controls (134.76bp,76.822bp) and (117.77bp,89.1bp)  .. (node_8);
\end{tikzpicture}
}

\end{center}
which does not contain the relation \((6,3) \transterel (2,3)\), nor \((3,2) \transterel (3,6)\),
nor anything related to or derived from
these relations. It is seemingly not possible to discard the ``redundancy'' involved with
\[
(x,y) \transterel (u,v)  \qquad \iff \qquad (v,u) \transterel (y,x).
\]
\label{example-C2C2C2}
\end{mexample}

Even though the poset \(\twoanti{P}\) can not be said to be two identical copies of a smaller poset,
it is nonetheless true that the relations, and also the covering realtions, are naturally split
into two equal parts. We make this precise:
\begin{lemma}
The antitone involution \(\tau\) restricts to an antitone involution on \(\twoanti{P}\);
hence for  \((x,y)\) and \((u,v)\) in \(\twoanti{P}\) we have that
\begin{equation}
\label{tau-on-anti}
(x,y) \transterel (u,v) \quad \iff \quad (v,u) \transterel (y,x)
\end{equation}
\end{lemma}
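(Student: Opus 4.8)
The plan is to reduce the statement to the two facts already in hand for the ambient poset: that $\tau((x,y)) = (y,x)$ is an antitone involution on $(\Ppairs, \transerel)$, and that, by Theorem \ref{thm-induced}, $(\twoanti{P}, \transterel)$ is precisely the induced subposet of $(\Ppairs, \transerel)$ on the middle block. First I would verify that $\tau$ actually maps $\twoanti{P}$ into itself. If $(x,y) \in \twoanti{P}$ then $x \parallel y$, and since parallelism is symmetric we also have $y \parallel x$, so $\tau((x,y)) = (y,x) \in \twoanti{P}$. Because $\tau \circ \tau$ is the identity on all of $\Ppairs$, its restriction to the $\tau$-invariant subset $\twoanti{P}$ remains an involution.

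Next I would promote this restriction to an \emph{antitone} involution, and here the induced-subposet identification of Theorem \ref{thm-induced} is what does the work. Take $(x,y), (u,v) \in \twoanti{P}$ with $(x,y) \transterel (u,v)$. Since $\transterel$ is the restriction of $\transerel$ to $\twoanti{P}$, this is the same as $(x,y) \transerel (u,v)$ holding in $\Ppairs$. The already-established antitonicity of $\tau$ on $\Ppairs$ then gives $\tau((u,v)) \transerel \tau((x,y))$, that is $(v,u) \transerel (y,x)$. As both $(v,u)$ and $(y,x)$ lie in $\twoanti{P}$, the induced-subposet property returns $(v,u) \transterel (y,x)$. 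This yields the forward implication in (\ref{tau-on-anti}).

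Finally, the biconditional follows formally from $\tau$ being an antitone involution. Applying the forward implication just proved to the pair $(v,u) \transterel (y,x)$, and using $\tau((y,x)) = (x,y)$ together with $\tau((v,u)) = (u,v)$, produces the reverse implication $(v,u) \transterel (y,x) \implies (x,y) \transterel (u,v)$. Combining the two gives the equivalence (\ref{tau-on-anti}).

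I do not expect a real obstacle in this lemma; the only point demanding care is that one must argue through the ambient poset $\Ppairs$ rather than directly inside $\twoanti{P}$, because the generating relations $\terel$ by themselves do not expose the comparabilities buried in the transitive closure. Theorem \ref{thm-induced} is exactly the tool that legitimizes this detour, certifying that a relation holds between two middle-block elements in $\twoanti{P}$ if and only if it holds between them in $\Ppairs$, so that the antitone involution transports cleanly from the larger poset to the smaller one.
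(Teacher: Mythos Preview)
Your proof is correct and follows essentially the same approach as the paper: both argue that $\tau$ is an antitone involution on $(\Ppairs,\transerel)$, that $\tau$ preserves $\twoanti{P}$ because parallelism is symmetric, and that the induced-subposet identification of Theorem~\ref{thm-induced} then transports the antitone property down to $(\twoanti{P},\transterel)$. Your version is simply more explicit in unpacking the forward and reverse implications of~(\ref{tau-on-anti}).
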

\begin{proof}
First, recall that \(\tau\) is antitone on \((\Ppairs,\transerel)\) and that
\(\tau = \tau^{-1}\). Secondly, \((x,y) \in \twoanti{P}\) if and only if \((y,x) \in \twoanti{P}\), thus
\(\tau(\twoanti{P}) = \twoanti{P}\). Furthermore, \((\twoanti{P},\transterel)\) is the induced subposet
of \((\Ppairs,\transerel)\). It follows that the restriction of \(\tau\) is antitone on
\((\twoanti{P},\transterel)\) and is equal to its own inverse; thus (\ref{tau-on-anti}) holds.
\end{proof}
\section{The polytope of probability functions.}
\label{sec:org9cf55fd}
\subsection{Definition of the probability functions polytope}
\label{sec:org688a90e}
\begin{theorem}
Let \((P,\le)\) be a finite poset, with \(|P|=n\), and let \(m\) denote the number of
two-element antichains in \(P\), i.e. the cardinality of the set
\[
\{(x,y) \in P \times P \mid x \parallel y\}
\]
is \(2m\). Assume that \(P\) is not a chain, so that \(m > 0\).
Then the set of all probability functions on \(P\)
form a convex polytope which can be realised inside
\(\RR^{^{n^{2}}}\), or inside \(\RR^{^{n^{2}-n}}\),
but also inside \(\RR^{2m}\).
\label{thm-embedd}
\end{theorem}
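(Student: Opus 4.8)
The plan is to encode a probability function by its table of values and observe that all of the defining conditions are affine, so that the solution set is a bounded polyhedron, hence a polytope; the three ambient dimensions then arise by successively discarding coordinates whose values are \emph{constant} on the entire solution set.

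First I would fix an enumeration of $P$ and identify a function $\pi : P \times P \to [0,1]$ with the vector $(\pi(x,y))_{(x,y) \in P \times P} \in \RR^{n^2}$. Each condition of Definition \ref{def-probfunc} is affine in these coordinates: conditions (1) and (2) are the equalities $\pi(x,x)=1$ and $\pi(x,y)=1$ (for $x \le y$), condition (3) is the equality $\pi(x,y)+\pi(y,x)=1$, and condition (4) is the inequality $\pi(x,z)-\pi(x,y)\ge 0$ (for $y<z$). Thus the set of probability functions is an intersection of finitely many hyperplanes and closed halfspaces, and since every coordinate is constrained to $[0,1]$ the set is bounded. Hence it is a polytope, its convexity being in any case immediate from Lemma \ref{lemma-isconvex}. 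This is the realisation inside $\RR^{n^2}$.

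Next I would pass to $\RR^{n^2-n}$. By condition (1) the $n$ diagonal coordinates are identically $1$ on the whole solution set, so the projection that forgets them is an affine map which is injective on the polytope, the forgotten coordinates being recoverable as the constant $1$. Its image is the polytope of reduced probability functions of Definition \ref{def-probfunc-reduced}, living in the space $\RR^{n^2-n}$ indexed by the pairs of $\Ppairs$. Since an affine image of a polytope is a polytope and the map is a bijection onto its image, this realises the same polytope inside $\RR^{n^2-n}$. Finally, for $\RR^{2m}$ I would show that the comparable-pair coordinates are constant as well: if $x<y$ then $\pi(x,y)=1$ by condition (2), and if $x>y$ then $y<x$ forces $\pi(y,x)=1$, whence condition (3) gives $\pi(x,y)=0$. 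So every coordinate indexed by a comparable pair is pinned to $0$ or $1$, and the only coordinates that can vary are the $2m$ indexed by the parallel pairs $\{(x,y)\mid x\parallel y\}$. Forgetting the comparable coordinates is again an affine projection, injective on the polytope for the same reason, so its image is a polytope inside $\RR^{2m}$.

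The computations are routine; the one point to handle with care is injectivity at each stage, namely that every discarded coordinate is an affine (indeed constant) function of the retained ones on the solution set, so that no two distinct probability functions collapse to the same point and each image is a faithful copy of the original polytope. I expect no serious obstacle here, only bookkeeping. I would close by remarking that one could in principle reduce further to $\RR^{m}$ using condition (3) on parallel pairs, but, as Example \ref{example-C2C2C2} illustrates, the natural order $\transterel$ lives on \emph{ordered} antichain pairs, so retaining all $2m$ coordinates is the normalisation we want for the sequel.
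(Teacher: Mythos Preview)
Your proposal is correct and follows essentially the same line as the paper's proof: encode $\pi$ as a vector in $\RR^{n^2}$, observe that the defining conditions are finitely many affine equalities and inequalities inside $[0,1]^{n^2}$ (hence a polytope), then project away first the diagonal coordinates and then the comparable-pair coordinates, each of which is constant on the solution set. Your extra care in spelling out injectivity of the projections and your closing remark about $\RR^m$ go slightly beyond the paper's terse version, but the argument is the same.
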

\begin{proof}
Let \(n = |P|\). Choose some linear extension \(\delta \in \linexts{P}\),
to label the elements in \(P\). For any probability function \(\pi\) on \(P\),
we define an \(n \times n\)-matrix \(A\) by
\[A_{{\delta(x),\delta(y)}} = \pi(x,y).\]
The set of all \(n \times n\)-matrices form an affine space of dimension \(n^{2}\).
By Lemma (\ref{lemma-isconvex}) the subset of matrices arising from
probability functions is convex. But this subset is also contained inside \([0,1]^{n^{2}}\), so it is
bounded. Furthermore, the equalities and inequalities in Definition (\ref{def-probfunc})
are finite in number and cut out the prescribed subset of matrices, so this set
is in fact a polytope \(\subset \RR^{n^{2}}\).

Since \(\pi(x,x)=1\) always, the
corresponding entries in the matrix \(A\) are prescribed. We can thus
project the polytope down to \(\RR^{n^{2} - n}\), by forgetting the coordinates \((i,i)\) .

Similarly, since \(\pi(x,y)=1\) for \(x \le y\), \(\pi(x,y)=0\) for \(x > y\),
we can project the polytope down to \(\QQ^{2m}\), by forgetting the coordinates \((i,j)\) which are not associated
with an \emph{ordered} antichain  \((x,y)\).
\end{proof}

\begin{definition}
Let \((P,\le)\) be a finite poset, with \(|P|=n\)  and containing \(m>0\) \emph{unordered} two-element antichains.
We denote by \(\prpolytope{P} \subset \RR^{2m}\) the probability functions polytope, as described in Theorem (\ref{thm-embedd}).
\end{definition}

\begin{lemma}
Let \(H \subset \RR^{s} \times \RR^{s}\) be the affine subspace
\[H = \{(\vec{x}, \vec{y}) \mid x_{i} + y_{i} = 1 \text{ for } 1 \le i \le s\}.\]
Suppose that \(Q \subset H\) is a polytope. Then
\[
Q  \simeq \mathbb{Proj}(Q),
\] 
where \(\mathbb{Proj}\) denotes the projection \((\vec{x},\vec{y}) \mapsto \vec{x}\),
and \(Q \simeq \mathbb{Proj}(Q)\) means that the two polytopes
are \emph{combinatorially equivalent}, i.e., their face lattices are isomorphic.
\label{lemma-project}
\end{lemma}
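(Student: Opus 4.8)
The plan is to recognize the restriction of $\mathbb{Proj}$ to $H$ as an affine isomorphism, and then to invoke the standard fact that affine isomorphisms preserve the face lattice of a polytope. The crucial observation is that on $H$ the second block of coordinates is completely determined by the first: the defining equations $x_i + y_i = 1$ say precisely that $\vec{y} = \mathbf{1} - \vec{x}$, where $\mathbf{1} = (1,\dots,1)$. Thus $H$ is the graph of the affine map $\vec{x} \mapsto \mathbf{1} - \vec{x}$, and in particular $\mathbb{Proj}$ restricted to $H$ cannot lose any information.

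First I would introduce the affine map
\[
\iota: \RR^{s} \to \RR^{s} \times \RR^{s}, \qquad \iota(\vec{x}) = (\vec{x},\, \mathbf{1} - \vec{x}),
\]
and check the two identities $\mathbb{Proj} \circ \iota = \mathrm{id}_{\RR^{s}}$ and $\iota \circ (\mathbb{Proj}|_{H}) = \mathrm{id}_{H}$, the latter using that any $(\vec{x},\vec{y}) \in H$ satisfies $\vec{y} = \mathbf{1} - \vec{x}$. The image of $\iota$ is exactly $H$, so $\mathbb{Proj}|_{H}: H \to \RR^{s}$ is an affine bijection whose inverse $\iota$ is again affine; that is, it is an affine isomorphism of the affine space $H$ onto $\RR^{s}$.

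Next, since $Q \subseteq H$, the map $\mathbb{Proj}|_{Q}$ is just the restriction of this affine isomorphism, so it carries $Q$ bijectively onto $\mathbb{Proj}(Q)$, which is therefore again a polytope. An affine isomorphism induces an inclusion-preserving bijection between the faces of $Q$ and those of $\mathbb{Proj}(Q)$: a face $F$ of $Q$ is the intersection of $Q$ with a supporting hyperplane $\{\vec{z} \mid \ell(\vec{z}) = c\}$, and pulling $\ell$ back through the inverse affine map $\iota$ yields a linear functional whose associated supporting hyperplane cuts out $\mathbb{Proj}(F)$, and symmetrically in the other direction. Hence the face lattices of $Q$ and $\mathbb{Proj}(Q)$ are isomorphic, which is exactly the assertion $Q \simeq \mathbb{Proj}(Q)$; in fact one obtains the stronger conclusion that the two polytopes are affinely equivalent.

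There is essentially no genuine obstacle here beyond bookkeeping. The only points requiring verification are that $\mathbb{Proj}|_{H}$ is genuinely invertible as an affine map, which is immediate from the graph description above, and that an affine isomorphism induces an isomorphism of face lattices, which is standard; should a self-contained argument be desired, the last point is precisely the supporting-hyperplane correspondence just sketched.
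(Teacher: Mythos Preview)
Your proposal is correct and follows essentially the same approach as the paper: both exhibit the explicit affine inverse $\vec{x} \mapsto (\vec{x}, \mathbf{1} - \vec{x})$ to $\mathbb{Proj}|_{H}$ and conclude that the polytopes are in fact affinely isomorphic, hence combinatorially equivalent. Your write-up adds the supporting-hyperplane justification for why affine isomorphisms preserve face lattices, which the paper leaves implicit.
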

\begin{proof}
In fact, the linear maps
\begin{align*}
  \mathbb{Proj}_{\lvert H}: H & \to \RR^{s} \\
  (\vec{x},\vec{y}) & \mapsto \vec{x}
\end{align*}
and
\begin{align*}
  \mathbb{J}: \RR^{s} & \to H \\
  (x_{1}.\dots,x_{n}) & \mapsto (x_{1},\dots, x_{n}; 1 - x_{1},\dots, 1 - x_{n})
\end{align*}
are mutual inverses and thus shows that the polytopes are even \emph{affinely isomorphic}.
\end{proof}
Note however that these maps are not \emph{isometric}, so some combinatorially relevent information, i.e.,
the relative volume of polytopes inside \(H\), are not preserved.
\begin{corollary}
The probability functions polytope \(\prpolytope{P}\) may be defined inside
\([0,1]^{m} \subset \RR^m\) rather that inside \([0,1]^{2m} \subset \RR^{2m}\).
\end{corollary}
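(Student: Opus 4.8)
The plan is to apply Lemma (\ref{lemma-project}) directly, taking $s = m$, after splitting the $2m$ coordinates of $\RR^{2m}$ into two conjugate groups of $m$. First I would fix, for each of the $m$ unordered two-element antichains $\{x,y\}$ of $P$, a chosen orientation, say $(x,y)$; this designates one coordinate of $\RR^{2m}$ as the ``positive'' coordinate of that antichain, while its partner $(y,x)$ becomes the corresponding ``negative'' coordinate. Equivalently, this uses the antitone involution $\tau$ of (\ref{isotone}), restricted to $\twoanti{P}$, to pair the $2m$ coordinates into $m$ conjugate pairs $\{(x,y),(y,x)\}$. This identification realizes $\RR^{2m} \cong \RR^{m} \times \RR^{m}$, in which a point $(\vec{x},\vec{y})$ has $x_{i} = \pi$ evaluated at the $i$-th positive coordinate and $y_{i} = \pi$ evaluated at the matching negative coordinate.

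Next I would observe that the relations (\ref{eq-for-H}), which hold for every probability function $\pi$, force $\pi(x,y) + \pi(y,x) = 1$ for each parallel pair; under the identification above these are precisely the $m$ equations $x_{i} + y_{i} = 1$. Hence $\prpolytope{P}$ is contained in the affine subspace $H \subset \RR^{m} \times \RR^{m}$ appearing in Lemma (\ref{lemma-project}).

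Applying Lemma (\ref{lemma-project}) then yields that $\prpolytope{P}$ is affinely isomorphic, and in particular combinatorially equivalent, to its image $\mathbb{Proj}(\prpolytope{P}) \subset \RR^{m}$ under $(\vec{x},\vec{y}) \mapsto \vec{x}$. Since every value of a probability function lies in $[0,1]$, each retained coordinate satisfies $0 \le x_{i} \le 1$, so the image lands inside $[0,1]^{m}$, giving the desired lower-dimensional realization.

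The only real obstacle is the bookkeeping of the coordinate splitting: one must check that selecting a single orientation per unordered antichain is consistent and that the resulting projection is well-defined on all of $\prpolytope{P}$. This is routine once $\tau$ is used to pair the coordinates canonically, and the substantive content of the statement is already supplied by Lemma (\ref{lemma-project}).
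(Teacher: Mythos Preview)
Your proposal is correct and follows essentially the same approach as the paper: verify that the equations $\pi(x,y)+\pi(y,x)=1$ place $\prpolytope{P}$ inside the affine subspace $H$ of Lemma~\ref{lemma-project}, apply that lemma to project to $\RR^{m}$, and use $0\le\pi(x,y)\le 1$ to land in $[0,1]^{m}$. Your version is simply more explicit about the coordinate bookkeeping via $\tau$, which the paper leaves implicit.
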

\begin{proof}
The equations stemming from \[\pi(a,b) + \pi(b,a)=1\] for all probability functions
show that \[\prpolytope{P} \subset \{\vec{x} \mid x_{a,b} + x_{b,a}=1\},\]
so Lemma \ref{lemma-project} applies. The inequalities \[0 \le \pi(x,y) \le 1\] ensures that the probability functions polytope lies
within the respective unit hypercubes.
\end{proof}

As Example \ref{example-C2C2C2} shows, when \(P=C_{2} \times C_{2} \times C_{2}\),
removing half of the ordered pairs of parallel elements in
\(\twoanti{P}\) does not yield the correct poset. However, intersecting with the affine subspace
\(H\) spanned by the equations \(u_{i} + v_{i}=1\) has the effect of ``removing'' \(v_{i}\)
from the list of inequalities that cut out \(\prpolytope{P}\) by making it redundant.
We will return to \(P=C_{2} \times C_{2} \times C_{2}\) in Example \ref{C2C2C2-vertices} and give a complete list
of irredundant inequalities and equalities for \(\prpolytope{P}\).
\subsection{Examples of probability functions polytopes}
\label{sec:org2c2b1aa}
\subsubsection{The probability functions polytopes of chains}
\label{sec:org8b78ebb}
\begin{lemma}
When \(P\) is a chain, \(\prpolytope{P}\) is a point.
\end{lemma}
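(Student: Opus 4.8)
The plan is to show that on a chain there is exactly one reduced probability function, so that the polytope parameterizing all of them degenerates to a single point. First I would observe that if $P$ is a chain, then any two distinct elements are comparable; consequently there are no incomparable pairs, the ``middle block'' studied in the previous sections is empty, and the number $m$ of unordered two-element antichains is zero.

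Next I would verify directly that every value of a reduced probability function is forced by the axioms of Definition~\ref{def-probfunc-reduced}. For a pair with $x < y$, axiom~(1) gives $\pi(x,y)=1$; applying axiom~(2) to the pair $(y,x)$ then forces $\pi(y,x)=1-\pi(x,y)=0$. Since comparability exhausts all elements $(x,y)\in\Ppairs$, this pins down $\pi$ on its entire domain: it coincides with $\probnull$ (equivalently with $\problin{\ell}$ for the unique linear extension $\ell$ of a chain), and the monotonicity axioms~(3) and~(4) are then automatically satisfied. Hence the set of reduced probability functions on $P$ is a singleton.

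Finally I would translate this into the polytope statement. Because $\prpolytope{P}$ is the realization of this set inside $\RR^{2m}$ and $m=0$, the ambient space is $\RR^{0}$, which is itself a single point; equivalently, a convex set consisting of exactly one probability function has dimension $0$ and is therefore a point. The only subtlety worth flagging is a bookkeeping one: the definition of $\prpolytope{P}$ was stated under the hypothesis $m>0$, so I would note explicitly that for a chain the construction degenerates. Beyond this there is no genuine obstacle — the entire content of the lemma is the observation that, for a chain, comparability leaves no free coordinates.
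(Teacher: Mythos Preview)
Your proposal is correct and follows essentially the same approach as the paper: both argue that on a chain every pair is comparable, so all values of $\pi$ are forced (hence there is a unique probability function), and equivalently the set of two-element antichains is empty so $m=0$. Your version is more detailed---you explicitly verify axioms (3) and (4) and flag the $m>0$ hypothesis in the definition of $\prpolytope{P}$---but the underlying argument is identical to the paper's two-line proof.
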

\begin{proof}
The value \(\pi(x,y) \in \{0,1\}\), since \(x < y\) or \(x > y\). Thus, there is only a single
probability function on \(P\).

Alternatively, the set of ordered antichains is empty.
\end{proof}
\subsubsection{The probability functions polytopes of antichains}
\label{sec:orgcd61984}
For a two-element antichain \(P\), \(\twoanti{P} \simeq P\),
and \(\prpolytope{P}\) is a line segment.
The 3-element antichain has the cube \([0,1]^{3}\) as its \(\prpolytope{P}\).
More generally:
\begin{lemma}
Suppose that \(\twoanti{P}\) is an antichain poset with \(m\) antichains.
Then \(\prpolytope{P}\) is combinatorially equivalent with the \(m\)-dimensional hypercube.
In particular, the \(d\)-element antichain has a  probability functions polytope that is combinatorially equivalent with the
\({\binom{d}{2}}\)-dimensional hypercube.
\end{lemma}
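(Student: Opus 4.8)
The plan is to reduce the defining conditions of \(\prpolytope{P}\) to nothing but box constraints together with the pairing relations, so that the polytope becomes a coordinate copy of the \(m\)-cube. I would argue directly from Definition \ref{def-probfunc-reduced} and the embedding of Theorem \ref{thm-embedd}, rather than invoking any heavier machinery.

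First, recall that after the reductions in Theorem \ref{thm-embedd} a probability function is determined by its values on the ordered parallel pairs, i.e.\ on the elements of \(\twoanti{P}\); the values on comparable pairs are forced ($1$ if \(x \le y\) and, via condition (2), $0$ if \(x > y\)). Among the four conditions of Definition \ref{def-probfunc-reduced}, condition (1) fixes the comparable coordinates, condition (2) is the pairing \(\pi(x,y)+\pi(y,x)=1\), and conditions (3) and (4) are the two monotonicity requirements.

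Second, I would check that conditions (3) and (4), restricted to the free (parallel) coordinates, are precisely the generating relations \(\tarel\) and \(\tbrel\) of \(\twoanti{P}\), and that they never constrain a free coordinate against a fixed comparable one. The crucial observation is that if \(x \parallel y\) and \(y < z\) then \(x > z\) is impossible, since it would force \(x > y\); hence in condition (3) the value \(\pi(x,z)\) is either already $1$ (when \(x \le z\), making the inequality automatic) or else \(x \parallel z\), which records a genuine relation \((x,y) \transterel (x,z)\). A dual analysis handles condition (4) through \(\tbrel\). Consequently, once \(\twoanti{P}\) is an antichain, every instance of (3) and (4) is vacuous on the free coordinates, and the surviving constraints are exactly \(0 \le \pi(x,y) \le 1\) together with the pairing (2).

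Third, I would conclude. Inside \(\RR^{2m}\) the surviving description is \(\{\vec{x} \in [0,1]^{2m} \mid x_{(x,y)} + x_{(y,x)} = 1\}\), the graph of \(x_{(y,x)} = 1 - x_{(x,y)}\) over the cube spanned by one coordinate per unordered antichain; by Lemma \ref{lemma-project} this is affinely isomorphic, hence combinatorially equivalent, to \([0,1]^m\). For the \(d\)-element antichain \(P\), every pair of distinct elements is parallel, so \(\twoanti{P} = \Ppairs\) is an antichain by the earlier lemma, with exactly \(m = \binom{d}{2}\) unordered antichains, and the general statement yields the \(\binom{d}{2}\)-cube. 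I expect the only real work to be the case analysis of the second step: everything there rests on the single observation that \(x \parallel y\) and \(y < z\) exclude \(x > z\) (and its dual), which is precisely what prevents a fixed value $0$ from being imposed as an upper bound on a free variable; once this is checked, the identification with the cube is routine.
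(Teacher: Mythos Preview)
Your proof is correct and follows essentially the same route as the paper: argue that when \(\twoanti{P}\) is an antichain the only surviving constraints on the free coordinates are the box constraints \(0\le\pi(x,y)\le 1\) and the pairing \(\pi(x,y)+\pi(y,x)=1\), then invoke Lemma~\ref{lemma-project} to identify the result with \([0,1]^m\). You are in fact more careful than the paper, which simply asserts that ``there are no restrictions'' without the case analysis you carry out in your second step; your observation that \(x\parallel y\) and \(y<z\) exclude \(x>z\) (and its dual) is exactly what is needed to rule out a fixed value~\(0\) appearing as an upper bound on a free coordinate.
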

\begin{proof}
In this case,
there are no restrictions on the value \(\pi(x,y)\) that a probability function
on \(P\) may take on a given (ordered) antichain \((x,y)\) except that
\[0 \le \pi(x,y) \le 1\] and
\[\pi(x,y) + \pi(y,x)=1.\]
Thus \(\prpolytope{P}\) is the unit hypercube \([0,1]^{2m}\)
intersected with the affine subspace \(u_{i} + v_{i}=1\), where
\[U=\{u_{1},\dots,u_{m}\}\]
is a choice of variables (ordered two-element antichains) such that
\[\twoanti{P} = U \sqcup \tau(U).\]
From Lemma \ref{lemma-project} we have that this polytope is combinatorially equivalent with its projection
onto the span of \(U\); this is a hypercube of dimension \(m = d^{2}-d\).
\end{proof}
There are posets \(P\) which are not antichains, but where
\(\twoanti{P}\) is an antichain poset. Examples can be found already for posets with three elements,
see the list in the last chapter.
\subsubsection{The probability functions polytopes of ordinal sums}
\label{sec:org353cf2d}
\begin{lemma}
Let \(P,Q\) be two finite posets. Then the antichains of \(P \oplus Q\) is the disjoint union of the antichains of \(P\) and
the antichains of \(Q\).
\label{lemma-ordinalsum-antichains}
\end{lemma}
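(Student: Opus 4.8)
The plan is to identify the parallel pairs of $P \oplus Q$ directly, by a case analysis on where the two elements of a pair lie. Recall from the definition of the ordinal sum that the underlying set of $P \oplus Q$ is the disjoint union $P \sqcup Q$, and that $x \le_{P \oplus Q} y$ holds precisely when either $x \le_{P+Q} y$ (so that $x,y$ lie in the same summand and are comparable there) or when $x \in P$ and $y \in Q$. The target identity is
\[
\twoanti{P \oplus Q} = \twoanti{P} \sqcup \twoanti{Q},
\]
that is, the ordered parallel pairs of the ordinal sum are exactly those internal to $P$ together with those internal to $Q$.

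First I would fix two distinct elements $x,y$ of $P \oplus Q$ and split into cases according to their membership. If $x \in P$ and $y \in Q$, or symmetrically $y \in P$ and $x \in Q$, then by definition $x$ and $y$ are comparable in $P \oplus Q$ and hence never parallel; this shows that no parallel pair can straddle the two summands. If instead $x,y$ both lie in $P$, then the only way for them to be comparable in $P \oplus Q$ is via $\le_{P+Q}$, whose restriction to $P$ is exactly $\le_P$; hence $x$ and $y$ are parallel in $P \oplus Q$ if and only if they are parallel in $P$. The case $x,y \in Q$ is identical.

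Combining the three cases, the ordered parallel pairs of $P \oplus Q$ are precisely the ordered parallel pairs internal to $P$ together with those internal to $Q$, and no others. Since $P$ and $Q$ have disjoint underlying sets, these two collections of pairs are themselves disjoint, which upgrades the union to a genuine disjoint union and establishes the claimed identity.

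There is no serious obstacle here; the argument is a routine case split. The only point requiring a moment's care is verifying that comparability across the summands is genuine, so that cross pairs contribute nothing to $\twoanti{\cdot}$, and that the order induced on each summand inside $P \oplus Q$ coincides with its original order. Both facts follow immediately from unwinding the definition of $\le_{P \oplus Q}$, and the same case analysis in fact shows that every (nonempty) antichain of $P \oplus Q$ is wholly contained in one summand.
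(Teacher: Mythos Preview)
Your argument is correct and uses essentially the same case analysis as the paper: cross pairs are comparable in the ordinal sum, while comparability within a summand is unchanged. One small framing point: the lemma as stated concerns \emph{all} antichains of $P \oplus Q$, not just the ordered parallel pairs $\twoanti{P \oplus Q}$; your displayed target $\twoanti{P \oplus Q} = \twoanti{P} \sqcup \twoanti{Q}$ is really the first assertion of the ensuing corollary. Your final sentence (that every nonempty antichain lies wholly in one summand, since any two elements of an antichain must be parallel) is exactly what bridges the gap, and matches the paper's formulation.
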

\begin{proof}
No \(S \subset P \sqcup Q\) containing elments from both \(P\) and \(Q\) can be an antichain in \(P \oplus Q\).
Conversely, any induced subposet of \(S \subset P\) inside \(P+Q\) is isomorphic to the induced subset inside
\(P\), so it is an antichain of \(P+Q\) iff it is an antichain of \(P\). The same goes for subsets of \(Q\).
\end{proof}
\begin{corollary}
Let \(P,Q\) be two finite posets. Then
\[\twoanti{P \oplus Q} \simeq \twoanti{P} + \twoanti{Q},\]
and
\[\prpolytope{P \oplus Q} \simeq \prpolytope{P} \times \prpolytope{Q},\]
where the relation between the posets is poset isomorphism, and the relations between
the polytopes is combinatoriall equivalence.
\label{corr-ordinalsum-twoanti}
\end{corollary}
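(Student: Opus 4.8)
The plan is to prove the two assertions in turn, deriving the polytope factorization from the poset decomposition.

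First I would establish the poset isomorphism $\twoanti{P \oplus Q} \simeq \twoanti{P} + \twoanti{Q}$. By Lemma~(\ref{lemma-ordinalsum-antichains}) the two-element antichains of $P \oplus Q$ are exactly those of $P$ together with those of $Q$, so as sets $\twoanti{P \oplus Q} = \twoanti{P} \sqcup \twoanti{Q}$; it remains to identify the order. The key observation is that every generating relation $(x,y) \terel (u,v)$ forces a shared coordinate ($x=u$ or $y=v$). Hence if $(x,y) \in \twoanti{P}$ and $(u,v) \in \twoanti{Q}$, then $\{x,y\} \subseteq P$ and $\{u,v\} \subseteq Q$ are disjoint and no such relation can hold, so the two summands are mutually incomparable. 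Moreover, if $(x,y) \in \twoanti{P}$ and $(x,y) \tarel (u,v)$ with $(u,v) \in \twoanti{P\oplus Q}$, then $u = x \in P$ together with $u \parallel v$ forces $v \in P$ (otherwise $v \in Q$ would give $u < v$ in the ordinal sum), and $y \le_{P\oplus Q} v$ with $y,v \in P$ reduces to $y \le_P v$; the symmetric statement holds for $\tbrel$. Thus every generating relation out of a $\twoanti{P}$-element stays in $\twoanti{P}$ and coincides with the corresponding relation of $\twoanti{P}$, and likewise for $Q$. Passing to transitive closures then gives $\twoanti{P\oplus Q} \simeq \twoanti{P} + \twoanti{Q}$.

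For the polytope statement I would exploit this splitting of coordinates. By Theorem~(\ref{thm-embedd}) the polytope $\prpolytope{P\oplus Q}$ lives in the space $\RR^{\lvert \twoanti{P\oplus Q}\rvert} = \RR^{\lvert \twoanti{P}\rvert} \times \RR^{\lvert \twoanti{Q}\rvert}$, with coordinates indexed by the ordered antichains, and is cut out by the (in)equalities of Definition~(\ref{def-probfunc-reduced}). Each such constraint relates a pair of ordered antichains that is either a reversed pair $(x,y),(y,x)$ (the sum-to-one condition) or a $\terel$-related pair (the two monotonicity conditions); by the first part, in every case both antichains lie in the same summand. Consequently the defining system block-diagonalizes: writing a point as $(\vec a,\vec b)$ with $\vec a$ indexed by $\twoanti{P}$ and $\vec b$ by $\twoanti{Q}$, the constraints involving $\vec a$ alone are precisely those defining $\prpolytope{P}$ and those involving $\vec b$ alone are precisely those defining $\prpolytope{Q}$. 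Hence $(\vec a,\vec b) \in \prpolytope{P\oplus Q}$ if and only if $\vec a \in \prpolytope{P}$ and $\vec b \in \prpolytope{Q}$, giving the literal equality $\prpolytope{P\oplus Q} = \prpolytope{P}\times\prpolytope{Q}$, which in particular is a combinatorial equivalence.

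The main obstacle is the monotonicity inequalities in the polytope step: a priori a chain of such inequalities could propagate across the two summands through a common coordinate, which would destroy the block structure. Ruling this out is exactly the content of the first part, where I verify that no generating relation of $\terel$ bridges $\twoanti{P}$ and $\twoanti{Q}$ and that the relations within each summand are induced from $\twoanti{P}$ and $\twoanti{Q}$ respectively. Once the order $\transterel$ is confined to the two summands, the decoupling of the constraint system, and with it the product decomposition, is immediate.
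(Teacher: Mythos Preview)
Your proposal is correct and follows essentially the same approach as the paper: use Lemma~\ref{lemma-ordinalsum-antichains} to split the underlying set, observe that the generating relations \(\terel\) require a shared coordinate and hence cannot bridge the two summands, and then note that the defining constraints of the probability functions polytope decouple accordingly. Your argument is in fact somewhat more carefully written than the paper's (you explicitly verify that a relation out of an \(\twoanti{P}\)-element stays inside \(\twoanti{P}\) and reduces to the intrinsic relation there, and you spell out the block-diagonalization of the constraint system rather than just saying a probability function is determined by its restrictions), but the underlying idea is the same.
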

\begin{proof}
Suppose that
\[(x,y), (u,v) \in \twoanti{P \oplus Q}.\] By the previous Lemma, we may assume that
either
\[(x,y), (u,v) \in \twoanti{P},\] or
\[(x,y)\in \twoanti{P},\quad (u,v)\in \twoanti{Q}.\].
In the first case, the relation between \((x,y), (u,v)\) is as in \((\twoanti{P}, \transterel)\).
In the second case, \((x,y) \parallel (u,v)\) since no relations of the form (\ref{eq-trel}) apply, given that
\(x,y\) and \(u,v\) belong to disjoint universes.

The second assertion also follows from the Lemma; any probability function
\[\pi: \twoanti{P \oplus Q} \to [0,1]\]
is determined by its restrictions to \(\twoanti{P}\) and to \(\twoanti{Q}\).
\end{proof}

\begin{mexample}
Let \(P_{1}=C_{2} \times C_{3}\), \(P_{2}=C_{2} \times C_{4}\), \(P_{3} = P_{1} \oplus P_{2}\).

\phantomsection
\label{}
\begin{center}\begin{tabular}{c|c|c}
 \(P\) & \(P\) & \(\twoanti{P}\) \\
 \(P_1\) &
\resizebox{!}{2.5cm}{
 \begin{tikzpicture}[>=latex,line join=bevel,]
\node (node_0) at (20.811bp,6.5307bp) [draw,draw=none] {$0$};
  \node (node_1) at (5.8112bp,55.592bp) [draw,draw=none] {$1$};
  \node (node_3) at (35.811bp,55.592bp) [draw,draw=none] {$3$};
  \node (node_2) at (5.8112bp,104.65bp) [draw,draw=none] {$2$};
  \node (node_4) at (35.811bp,104.65bp) [draw,draw=none] {$4$};
  \node (node_5) at (20.811bp,153.71bp) [draw,draw=none] {$5$};
  \draw [black,->] (node_0) ..controls (17.124bp,19.099bp) and (13.812bp,29.49bp)  .. (node_1);
  \draw [black,->] (node_0) ..controls (24.499bp,19.099bp) and (27.81bp,29.49bp)  .. (node_3);
  \draw [black,->] (node_1) ..controls (5.8112bp,68.09bp) and (5.8112bp,78.313bp)  .. (node_2);
  \draw [black,->] (node_1) ..controls (13.367bp,68.445bp) and (20.424bp,79.516bp)  .. (node_4);
  \draw [black,->] (node_2) ..controls (9.4985bp,117.22bp) and (12.81bp,127.61bp)  .. (node_5);
  \draw [black,->] (node_3) ..controls (35.811bp,68.09bp) and (35.811bp,78.313bp)  .. (node_4);
  \draw [black,->] (node_4) ..controls (32.124bp,117.22bp) and (28.812bp,127.61bp)  .. (node_5);
\end{tikzpicture}
}

 &
\resizebox{!}{2.5cm}{
 \begin{tikzpicture}[>=latex,line join=bevel,]
\node (node_0) at (14.39bp,8.3018bp) [draw,draw=none] {$\left(4, 2\right)$};
  \node (node_2) at (37.39bp,60.906bp) [draw,draw=none] {$\left(3, 2\right)$};
  \node (node_1) at (61.39bp,8.3018bp) [draw,draw=none] {$\left(3, 1\right)$};
  \node (node_3) at (119.39bp,8.3018bp) [draw,draw=none] {$\left(2, 3\right)$};
  \node (node_4) at (101.39bp,60.906bp) [draw,draw=none] {$\left(2, 4\right)$};
  \node (node_5) at (148.39bp,60.906bp) [draw,draw=none] {$\left(1, 3\right)$};
  \draw [black,->] (node_0) ..controls (20.678bp,23.136bp) and (25.479bp,33.7bp)  .. (node_2);
  \draw [black,->] (node_1) ..controls (54.829bp,23.136bp) and (49.819bp,33.7bp)  .. (node_2);
  \draw [black,->] (node_3) ..controls (114.52bp,22.985bp) and (110.88bp,33.233bp)  .. (node_4);
  \draw [black,->] (node_3) ..controls (127.36bp,23.211bp) and (133.51bp,33.935bp)  .. (node_5);
\end{tikzpicture}
}

 \\
 \(P_2\) &
\resizebox{!}{2.5cm}{
 \begin{tikzpicture}[>=latex,line join=bevel,]
\node (node_0) at (20.811bp,6.5307bp) [draw,draw=none] {$0$};
  \node (node_1) at (5.8112bp,55.592bp) [draw,draw=none] {$1$};
  \node (node_4) at (35.811bp,55.592bp) [draw,draw=none] {$4$};
  \node (node_2) at (5.8112bp,104.65bp) [draw,draw=none] {$2$};
  \node (node_5) at (35.811bp,104.65bp) [draw,draw=none] {$5$};
  \node (node_3) at (5.8112bp,153.71bp) [draw,draw=none] {$3$};
  \node (node_6) at (35.811bp,153.71bp) [draw,draw=none] {$6$};
  \node (node_7) at (20.811bp,202.78bp) [draw,draw=none] {$7$};
  \draw [black,->] (node_0) ..controls (17.124bp,19.099bp) and (13.812bp,29.49bp)  .. (node_1);
  \draw [black,->] (node_0) ..controls (24.499bp,19.099bp) and (27.81bp,29.49bp)  .. (node_4);
  \draw [black,->] (node_1) ..controls (5.8112bp,68.09bp) and (5.8112bp,78.313bp)  .. (node_2);
  \draw [black,->] (node_1) ..controls (13.367bp,68.445bp) and (20.424bp,79.516bp)  .. (node_5);
  \draw [black,->] (node_2) ..controls (5.8112bp,117.15bp) and (5.8112bp,127.37bp)  .. (node_3);
  \draw [black,->] (node_2) ..controls (13.367bp,117.51bp) and (20.424bp,128.58bp)  .. (node_6);
  \draw [black,->] (node_3) ..controls (9.4985bp,166.28bp) and (12.81bp,176.67bp)  .. (node_7);
  \draw [black,->] (node_4) ..controls (35.811bp,68.09bp) and (35.811bp,78.313bp)  .. (node_5);
  \draw [black,->] (node_5) ..controls (35.811bp,117.15bp) and (35.811bp,127.37bp)  .. (node_6);
  \draw [black,->] (node_6) ..controls (32.124bp,166.28bp) and (28.812bp,176.67bp)  .. (node_7);
\end{tikzpicture}
}

  &
\resizebox{!}{2.5cm}{
 \begin{tikzpicture}[>=latex,line join=bevel,]
\node (node_0) at (14.39bp,8.3018bp) [draw,draw=none] {$\left(6, 3\right)$};
  \node (node_2) at (37.39bp,60.906bp) [draw,draw=none] {$\left(5, 3\right)$};
  \node (node_1) at (61.39bp,8.3018bp) [draw,draw=none] {$\left(5, 2\right)$};
  \node (node_4) at (84.39bp,60.906bp) [draw,draw=none] {$\left(4, 2\right)$};
  \node (node_5) at (60.39bp,113.51bp) [draw,draw=none] {$\left(4, 3\right)$};
  \node (node_3) at (108.39bp,8.3018bp) [draw,draw=none] {$\left(4, 1\right)$};
  \node (node_6) at (166.39bp,8.3018bp) [draw,draw=none] {$\left(3, 4\right)$};
  \node (node_7) at (148.39bp,60.906bp) [draw,draw=none] {$\left(3, 5\right)$};
  \node (node_9) at (195.39bp,60.906bp) [draw,draw=none] {$\left(2, 4\right)$};
  \node (node_8) at (119.39bp,113.51bp) [draw,draw=none] {$\left(3, 6\right)$};
  \node (node_10) at (166.39bp,113.51bp) [draw,draw=none] {$\left(2, 5\right)$};
  \node (node_11) at (213.39bp,113.51bp) [draw,draw=none] {$\left(1, 4\right)$};
  \draw [black,->] (node_0) ..controls (20.678bp,23.136bp) and (25.479bp,33.7bp)  .. (node_2);
  \draw [black,->] (node_1) ..controls (54.829bp,23.136bp) and (49.819bp,33.7bp)  .. (node_2);
  \draw [black,->] (node_1) ..controls (67.678bp,23.136bp) and (72.479bp,33.7bp)  .. (node_4);
  \draw [black,->] (node_2) ..controls (43.678bp,75.74bp) and (48.479bp,86.304bp)  .. (node_5);
  \draw [black,->] (node_3) ..controls (101.83bp,23.136bp) and (96.819bp,33.7bp)  .. (node_4);
  \draw [black,->] (node_4) ..controls (77.829bp,75.74bp) and (72.819bp,86.304bp)  .. (node_5);
  \draw [black,->] (node_6) ..controls (161.52bp,22.985bp) and (157.88bp,33.233bp)  .. (node_7);
  \draw [black,->] (node_6) ..controls (174.36bp,23.211bp) and (180.51bp,33.935bp)  .. (node_9);
  \draw [black,->] (node_7) ..controls (140.42bp,75.815bp) and (134.27bp,86.539bp)  .. (node_8);
  \draw [black,->] (node_7) ..controls (153.26bp,75.589bp) and (156.9bp,85.836bp)  .. (node_10);
  \draw [black,->] (node_9) ..controls (187.42bp,75.815bp) and (181.27bp,86.539bp)  .. (node_10);
  \draw [black,->] (node_9) ..controls (200.26bp,75.589bp) and (203.9bp,85.836bp)  .. (node_11);
\end{tikzpicture}
}

 \\
 \(P_3 = P1 \oplus P_2\) &
\resizebox{!}{3.0cm}{
 \begin{tikzpicture}[>=latex,line join=bevel,]
\node (node_0) at (21.811bp,6.5307bp) [draw,draw=none] {$0$};
  \node (node_1) at (6.8112bp,55.592bp) [draw,draw=none] {$1$};
  \node (node_3) at (36.811bp,55.592bp) [draw,draw=none] {$3$};
  \node (node_2) at (6.8112bp,104.65bp) [draw,draw=none] {$2$};
  \node (node_4) at (36.811bp,104.65bp) [draw,draw=none] {$4$};
  \node (node_5) at (21.811bp,153.71bp) [draw,draw=none] {$5$};
  \node (node_6) at (21.811bp,202.78bp) [draw,draw=none] {$6$};
  \node (node_7) at (5.8112bp,251.84bp) [draw,draw=none] {$7$};
  \node (node_10) at (37.811bp,251.84bp) [draw,draw=none] {$10$};
  \node (node_8) at (5.8112bp,300.9bp) [draw,draw=none] {$8$};
  \node (node_11) at (37.811bp,300.9bp) [draw,draw=none] {$11$};
  \node (node_9) at (5.8112bp,349.96bp) [draw,draw=none] {$9$};
  \node (node_12) at (37.811bp,349.96bp) [draw,draw=none] {$12$};
  \node (node_13) at (21.811bp,399.02bp) [draw,draw=none] {$13$};
  \draw [black,->] (node_0) ..controls (18.124bp,19.099bp) and (14.812bp,29.49bp)  .. (node_1);
  \draw [black,->] (node_0) ..controls (25.499bp,19.099bp) and (28.81bp,29.49bp)  .. (node_3);
  \draw [black,->] (node_1) ..controls (6.8112bp,68.09bp) and (6.8112bp,78.313bp)  .. (node_2);
  \draw [black,->] (node_1) ..controls (14.367bp,68.445bp) and (21.424bp,79.516bp)  .. (node_4);
  \draw [black,->] (node_2) ..controls (10.499bp,117.22bp) and (13.81bp,127.61bp)  .. (node_5);
  \draw [black,->] (node_3) ..controls (36.811bp,68.09bp) and (36.811bp,78.313bp)  .. (node_4);
  \draw [black,->] (node_4) ..controls (33.124bp,117.22bp) and (29.812bp,127.61bp)  .. (node_5);
  \draw [black,->] (node_5) ..controls (21.811bp,166.21bp) and (21.811bp,176.44bp)  .. (node_6);
  \draw [black,->] (node_6) ..controls (17.854bp,215.42bp) and (14.264bp,225.98bp)  .. (node_7);
  \draw [black,->] (node_6) ..controls (25.768bp,215.42bp) and (29.358bp,225.98bp)  .. (node_10);
  \draw [black,->] (node_7) ..controls (5.8112bp,264.34bp) and (5.8112bp,274.56bp)  .. (node_8);
  \draw [black,->] (node_7) ..controls (13.919bp,264.76bp) and (21.564bp,276.01bp)  .. (node_11);
  \draw [black,->] (node_8) ..controls (5.8112bp,313.4bp) and (5.8112bp,323.62bp)  .. (node_9);
  \draw [black,->] (node_8) ..controls (13.919bp,313.82bp) and (21.564bp,325.07bp)  .. (node_12);
  \draw [black,->] (node_9) ..controls (9.7685bp,362.6bp) and (13.358bp,373.16bp)  .. (node_13);
  \draw [black,->] (node_10) ..controls (37.811bp,264.34bp) and (37.811bp,274.56bp)  .. (node_11);
  \draw [black,->] (node_11) ..controls (37.811bp,313.4bp) and (37.811bp,323.62bp)  .. (node_12);
  \draw [black,->] (node_12) ..controls (33.854bp,362.6bp) and (30.264bp,373.16bp)  .. (node_13);
\end{tikzpicture}
}

 &
\resizebox{!}{2.5cm}{
 \begin{tikzpicture}[>=latex,line join=bevel,]
\node (node_0) at (16.881bp,8.3018bp) [draw,draw=none] {$\left(12, 9\right)$};
  \node (node_2) at (39.881bp,60.906bp) [draw,draw=none] {$\left(11, 9\right)$};
  \node (node_1) at (68.881bp,8.3018bp) [draw,draw=none] {$\left(11, 8\right)$};
  \node (node_4) at (91.881bp,60.906bp) [draw,draw=none] {$\left(10, 8\right)$};
  \node (node_5) at (65.881bp,113.51bp) [draw,draw=none] {$\left(10, 9\right)$};
  \node (node_3) at (120.88bp,8.3018bp) [draw,draw=none] {$\left(10, 7\right)$};
  \node (node_6) at (183.88bp,8.3018bp) [draw,draw=none] {$\left(9, 10\right)$};
  \node (node_7) at (162.88bp,60.906bp) [draw,draw=none] {$\left(9, 11\right)$};
  \node (node_9) at (214.88bp,60.906bp) [draw,draw=none] {$\left(8, 10\right)$};
  \node (node_8) at (130.88bp,113.51bp) [draw,draw=none] {$\left(9, 12\right)$};
  \node (node_10) at (182.88bp,113.51bp) [draw,draw=none] {$\left(8, 11\right)$};
  \node (node_11) at (234.88bp,113.51bp) [draw,draw=none] {$\left(7, 10\right)$};
  \node (node_12) at (238.88bp,8.3018bp) [draw,draw=none] {$\left(4, 2\right)$};
  \node (node_14) at (274.88bp,60.906bp) [draw,draw=none] {$\left(3, 2\right)$};
  \node (node_13) at (285.88bp,8.3018bp) [draw,draw=none] {$\left(3, 1\right)$};
  \node (node_15) at (335.88bp,8.3018bp) [draw,draw=none] {$\left(2, 3\right)$};
  \node (node_16) at (328.88bp,60.906bp) [draw,draw=none] {$\left(2, 4\right)$};
  \node (node_17) at (375.88bp,60.906bp) [draw,draw=none] {$\left(1, 3\right)$};
  \draw [black,->] (node_0) ..controls (23.169bp,23.136bp) and (27.97bp,33.7bp)  .. (node_2);
  \draw [black,->] (node_1) ..controls (60.91bp,23.211bp) and (54.764bp,33.935bp)  .. (node_2);
  \draw [black,->] (node_1) ..controls (75.169bp,23.136bp) and (79.97bp,33.7bp)  .. (node_4);
  \draw [black,->] (node_2) ..controls (46.989bp,75.74bp) and (52.417bp,86.304bp)  .. (node_5);
  \draw [black,->] (node_3) ..controls (112.91bp,23.211bp) and (106.76bp,33.935bp)  .. (node_4);
  \draw [black,->] (node_4) ..controls (84.773bp,75.74bp) and (79.345bp,86.304bp)  .. (node_5);
  \draw [black,->] (node_6) ..controls (178.17bp,23.06bp) and (173.85bp,33.466bp)  .. (node_7);
  \draw [black,->] (node_6) ..controls (192.45bp,23.287bp) and (199.12bp,34.172bp)  .. (node_9);
  \draw [black,->] (node_7) ..controls (154.04bp,75.891bp) and (147.15bp,86.775bp)  .. (node_8);
  \draw [black,->] (node_7) ..controls (168.32bp,75.664bp) and (172.43bp,86.07bp)  .. (node_10);
  \draw [black,->] (node_9) ..controls (206.04bp,75.891bp) and (199.15bp,86.775bp)  .. (node_10);
  \draw [black,->] (node_9) ..controls (220.32bp,75.664bp) and (224.43bp,86.07bp)  .. (node_11);
  \draw [black,->] (node_12) ..controls (248.88bp,23.362bp) and (256.74bp,34.409bp)  .. (node_14);
  \draw [black,->] (node_13) ..controls (282.92bp,22.909bp) and (280.73bp,33.0bp)  .. (node_14);
  \draw [black,->] (node_15) ..controls (334.0bp,22.909bp) and (332.6bp,33.0bp)  .. (node_16);
  \draw [black,->] (node_15) ..controls (347.05bp,23.438bp) and (355.91bp,34.647bp)  .. (node_17);
\end{tikzpicture}
}

\end{tabular}\end{center}

Put \(A_{i} = \prpolytope{P_{i}}\). Then \(A_{1}\) has vertices (columns in the matrix)
\phantomsection
\label{}
\begin{displaymath}
\left(\begin{array}{rrrrr}
0 & 0 & 0 & 1 & 1 \\
0 & 0 & 1 & 0 & 1 \\
1 & 0 & 1 & 1 & 1 \\
0 & 1 & 0 & 0 & 0 \\
1 & 1 & 1 & 0 & 0 \\
1 & 1 & 0 & 1 & 0
\end{array}\right)
\end{displaymath}
and \(A_{2}\)
\phantomsection
\label{}
\begin{displaymath}
\left(\begin{array}{rrrrrrrrrrrrrr}
0 & 0 & 0 & 0 & 0 & 0 & 0 & 0 & 0 & 1 & 1 & 1 & 1 & 1 \\
0 & 0 & 0 & 0 & 0 & 0 & 0 & 1 & 1 & 0 & 1 & 0 & 1 & 0 \\
0 & 1 & 1 & 0 & 0 & 0 & 1 & 1 & 1 & 1 & 1 & 1 & 1 & 1 \\
0 & 0 & 0 & 0 & 0 & 1 & 1 & 0 & 1 & 0 & 0 & 0 & 1 & 1 \\
0 & 0 & 1 & 1 & 0 & 1 & 1 & 1 & 1 & 0 & 1 & 1 & 1 & 1 \\
1 & 1 & 1 & 1 & 0 & 1 & 1 & 1 & 1 & 1 & 1 & 1 & 1 & 1 \\
0 & 0 & 0 & 0 & 1 & 0 & 0 & 0 & 0 & 0 & 0 & 0 & 0 & 0 \\
1 & 0 & 0 & 1 & 1 & 1 & 0 & 0 & 0 & 0 & 0 & 0 & 0 & 0 \\
1 & 1 & 1 & 1 & 1 & 1 & 1 & 1 & 1 & 0 & 0 & 0 & 0 & 0 \\
1 & 1 & 0 & 0 & 1 & 0 & 0 & 0 & 0 & 1 & 0 & 0 & 0 & 0 \\
1 & 1 & 1 & 1 & 1 & 1 & 1 & 0 & 0 & 1 & 0 & 1 & 0 & 1 \\
1 & 1 & 1 & 1 & 1 & 0 & 0 & 1 & 0 & 1 & 1 & 1 & 0 & 0
\end{array}\right)
\end{displaymath}

It indeed holds that \(A_{3} = A_{1} \times A_{2}\); it has \(5 \times 14 = 70\) vertices obtained by combining the vertices from
\(A_{1}\) and \(A_{2}\). We do not list them here.
\end{mexample}
\subsubsection{The probability functions polytopes of disjoint unions}
\label{sec:org0a26e67}
Considering that the \(n\)-element antichain is the disjoint sum of 1-element antichains,
it seems natural to ask:
\begin{question}
If \(P=P_{1} + P_{2}\) is finite poset, can \(\twoanti{P}\) and \(\prpolytope{P}\)
be described in terms of \(\twoanti{P_{1}}, \twoanti{P_{2}}\) and \(\prpolytope{P_{1}}, \prpolytope{P_{2}}\)?
\end{question}

The following example suggests that the answer is ``no''.
\begin{mexample}
Let \(P_{1}=P_{2} = C_{2}\). Then \(\prpolytope{P_{i}}\) is a point, and \(\twoanti{P_{i}}\) is empty.
However, \(\twoanti{P_1 + P_2}\) is  non-trivial:
\phantomsection
\label{}
\begin{center}\begin{tabular}{c|c}
 \(P_1 + P_2\) & \(\twoanti{P_1 + P_2}\) \\
\resizebox{!}{2.5cm}{
 \begin{tikzpicture}[>=latex,line join=bevel,]
\node (node_0) at (5.8112bp,6.5307bp) [draw,draw=none] {$3$};
  \node (node_1) at (5.8112bp,55.592bp) [draw,draw=none] {$4$};
  \node (node_2) at (35.811bp,6.5307bp) [draw,draw=none] {$1$};
  \node (node_3) at (35.811bp,55.592bp) [draw,draw=none] {$2$};
  \draw [black,->] (node_0) ..controls (5.8112bp,19.029bp) and (5.8112bp,29.252bp)  .. (node_1);
  \draw [black,->] (node_2) ..controls (35.811bp,19.029bp) and (35.811bp,29.252bp)  .. (node_3);
\end{tikzpicture}
}

 &
\resizebox{!}{3cm}{
 \begin{tikzpicture}[>=latex,line join=bevel,]
\node (node_0) at (55.39bp,8.3018bp) [draw,draw=none] {$\left(4, 1\right)$};
  \node (node_1) at (14.39bp,60.906bp) [draw,draw=none] {$\left(4, 2\right)$};
  \node (node_2) at (61.39bp,60.906bp) [draw,draw=none] {$\left(3, 1\right)$};
  \node (node_3) at (38.39bp,113.51bp) [draw,draw=none] {$\left(3, 2\right)$};
  \node (node_4) at (125.39bp,8.3018bp) [draw,draw=none] {$\left(2, 3\right)$};
  \node (node_5) at (119.39bp,60.906bp) [draw,draw=none] {$\left(2, 4\right)$};
  \node (node_6) at (166.39bp,60.906bp) [draw,draw=none] {$\left(1, 3\right)$};
  \node (node_7) at (142.39bp,113.51bp) [draw,draw=none] {$\left(1, 4\right)$};
  \draw [black,->] (node_0) ..controls (43.876bp,23.514bp) and (34.662bp,34.886bp)  .. (node_1);
  \draw [black,->] (node_0) ..controls (56.995bp,22.834bp) and (58.173bp,32.769bp)  .. (node_2);
  \draw [black,->] (node_1) ..controls (20.951bp,75.74bp) and (25.962bp,86.304bp)  .. (node_3);
  \draw [black,->] (node_2) ..controls (55.102bp,75.74bp) and (50.301bp,86.304bp)  .. (node_3);
  \draw [black,->] (node_4) ..controls (123.79bp,22.834bp) and (122.61bp,32.769bp)  .. (node_5);
  \draw [black,->] (node_4) ..controls (136.9bp,23.514bp) and (146.12bp,34.886bp)  .. (node_6);
  \draw [black,->] (node_5) ..controls (125.68bp,75.74bp) and (130.48bp,86.304bp)  .. (node_7);
  \draw [black,->] (node_6) ..controls (159.83bp,75.74bp) and (154.82bp,86.304bp)  .. (node_7);
\end{tikzpicture}
}

\end{tabular}\end{center}

The polytope \(\prpolytope{P_{1} + P_{2}}\) has dimension 4, so we show only its \textbf{edge graph} and \textbf{face lattice}:
\phantomsection
\label{}
\begin{center}\begin{tabular}{c|c}
\resizebox{!}{3.5cm}{
 \begin{tikzpicture}
\GraphInit[vstyle=Welsh]
\Vertex[L=\hbox{$0$},x=4.5595cm,y=2.7945cm]{v0}
\Vertex[L=\hbox{$1$},x=5.0cm,y=0.8312cm]{v1}
\Vertex[L=\hbox{$2$},x=1.2263cm,y=0.0cm]{v2}
\Vertex[L=\hbox{$3$},x=4.3564cm,y=5.0cm]{v3}
\Vertex[L=\hbox{$4$},x=0.7612cm,y=4.2665cm]{v4}
\Vertex[L=\hbox{$5$},x=0.0cm,y=2.2828cm]{v5}
\Edge[](v0)(v1)
\Edge[](v0)(v2)
\Edge[](v0)(v3)
\Edge[](v0)(v4)
\Edge[](v0)(v5)
\Edge[](v1)(v2)
\Edge[](v1)(v3)
\Edge[](v1)(v5)
\Edge[](v2)(v4)
\Edge[](v2)(v5)
\Edge[](v3)(v4)
\Edge[](v3)(v5)
\Edge[](v4)(v5)
\end{tikzpicture}
}

 &
\resizebox{!}{3.5cm}{
 \begin{tikzpicture}[>=latex,line join=bevel,]
\node (node_0) at (247.3bp,6.5307bp) [draw,draw=none] {$0$};
  \node (node_1) at (101.3bp,55.592bp) [draw,draw=none] {$1$};
  \node (node_2) at (198.3bp,55.592bp) [draw,draw=none] {$2$};
  \node (node_4) at (364.3bp,55.592bp) [draw,draw=none] {$4$};
  \node (node_6) at (297.3bp,55.592bp) [draw,draw=none] {$6$};
  \node (node_10) at (230.3bp,55.592bp) [draw,draw=none] {$10$};
  \node (node_20) at (265.3bp,55.592bp) [draw,draw=none] {$20$};
  \node (node_3) at (101.3bp,104.65bp) [draw,draw=none] {$3$};
  \node (node_7) at (131.3bp,104.65bp) [draw,draw=none] {$7$};
  \node (node_11) at (34.302bp,104.65bp) [draw,draw=none] {$11$};
  \node (node_21) at (69.302bp,104.65bp) [draw,draw=none] {$21$};
  \node (node_5) at (230.3bp,104.65bp) [draw,draw=none] {$5$};
  \node (node_12) at (163.3bp,104.65bp) [draw,draw=none] {$12$};
  \node (node_22) at (198.3bp,104.65bp) [draw,draw=none] {$22$};
  \node (node_9) at (180.3bp,153.71bp) [draw,draw=none] {$9$};
  \node (node_13) at (43.302bp,153.71bp) [draw,draw=none] {$13$};
  \node (node_23) at (78.302bp,153.71bp) [draw,draw=none] {$23$};
  \node (node_8) at (364.3bp,104.65bp) [draw,draw=none] {$8$};
  \node (node_14) at (396.3bp,104.65bp) [draw,draw=none] {$14$};
  \node (node_24) at (431.3bp,104.65bp) [draw,draw=none] {$24$};
  \node (node_15) at (247.3bp,153.71bp) [draw,draw=none] {$15$};
  \node (node_25) at (282.3bp,153.71bp) [draw,draw=none] {$25$};
  \node (node_16) at (297.3bp,104.65bp) [draw,draw=none] {$16$};
  \node (node_26) at (332.3bp,104.65bp) [draw,draw=none] {$26$};
  \node (node_17) at (113.3bp,153.71bp) [draw,draw=none] {$17$};
  \node (node_27) at (148.3bp,153.71bp) [draw,draw=none] {$27$};
  \node (node_18) at (352.3bp,153.71bp) [draw,draw=none] {$18$};
  \node (node_28) at (387.3bp,153.71bp) [draw,draw=none] {$28$};
  \node (node_19) at (178.3bp,202.78bp) [draw,draw=none] {$19$};
  \node (node_29) at (213.3bp,202.78bp) [draw,draw=none] {$29$};
  \node (node_30) at (262.3bp,104.65bp) [draw,draw=none] {$30$};
  \node (node_31) at (8.3019bp,153.71bp) [draw,draw=none] {$31$};
  \node (node_32) at (212.3bp,153.71bp) [draw,draw=none] {$32$};
  \node (node_33) at (78.302bp,202.78bp) [draw,draw=none] {$33$};
  \node (node_34) at (422.3bp,153.71bp) [draw,draw=none] {$34$};
  \node (node_35) at (248.3bp,202.78bp) [draw,draw=none] {$35$};
  \node (node_36) at (317.3bp,153.71bp) [draw,draw=none] {$36$};
  \node (node_37) at (143.3bp,202.78bp) [draw,draw=none] {$37$};
  \node (node_38) at (352.3bp,202.78bp) [draw,draw=none] {$38$};
  \node (node_39) at (195.3bp,251.84bp) [draw,draw=none] {$39$};
  \draw [black,->] (node_0) ..controls (221.34bp,15.899bp) and (149.87bp,38.938bp)  .. (node_1);
  \draw [black,->] (node_0) ..controls (234.52bp,19.809bp) and (221.89bp,31.935bp)  .. (node_2);
  \draw [black,->] (node_0) ..controls (270.05bp,16.68bp) and (321.75bp,37.476bp)  .. (node_4);
  \draw [black,->] (node_0) ..controls (260.42bp,19.88bp) and (273.5bp,32.185bp)  .. (node_6);
  \draw [black,->] (node_0) ..controls (243.1bp,19.17bp) and (239.28bp,29.73bp)  .. (node_10);
  \draw [black,->] (node_0) ..controls (251.75bp,19.17bp) and (255.79bp,29.73bp)  .. (node_20);
  \draw [black,->] (node_1) ..controls (101.3bp,68.09bp) and (101.3bp,78.313bp)  .. (node_3);
  \draw [black,->] (node_1) ..controls (108.86bp,68.445bp) and (115.91bp,79.516bp)  .. (node_7);
  \draw [black,->] (node_1) ..controls (85.913bp,67.401bp) and (65.934bp,81.435bp)  .. (node_11);
  \draw [black,->] (node_1) ..controls (93.194bp,68.515bp) and (85.549bp,79.76bp)  .. (node_21);
  \draw [black,->] (node_2) ..controls (177.93bp,66.474bp) and (139.2bp,85.269bp)  .. (node_3);
  \draw [black,->] (node_2) ..controls (206.41bp,68.515bp) and (214.06bp,79.76bp)  .. (node_5);
  \draw [black,->] (node_2) ..controls (189.38bp,68.586bp) and (180.89bp,80.005bp)  .. (node_12);
  \draw [black,->] (node_2) ..controls (198.3bp,68.09bp) and (198.3bp,78.313bp)  .. (node_22);
  \draw [black,->] (node_3) ..controls (118.94bp,116.16bp) and (147.06bp,132.91bp)  .. (node_9);
  \draw [black,->] (node_3) ..controls (86.937bp,117.31bp) and (70.497bp,130.65bp)  .. (node_13);
  \draw [black,->] (node_3) ..controls (95.579bp,117.36bp) and (90.335bp,128.09bp)  .. (node_23);
  \draw [black,->] (node_4) ..controls (343.04bp,63.871bp) and (295.84bp,80.093bp)  .. (node_5);
  \draw [black,->] (node_4) ..controls (364.3bp,68.09bp) and (364.3bp,78.313bp)  .. (node_8);
  \draw [black,->] (node_4) ..controls (372.41bp,68.515bp) and (380.06bp,79.76bp)  .. (node_14);
  \draw [black,->] (node_4) ..controls (379.69bp,67.401bp) and (399.67bp,81.435bp)  .. (node_24);
  \draw [black,->] (node_5) ..controls (217.18bp,118.0bp) and (204.11bp,130.31bp)  .. (node_9);
  \draw [black,->] (node_5) ..controls (234.51bp,117.29bp) and (238.32bp,127.85bp)  .. (node_15);
  \draw [black,->] (node_5) ..controls (243.49bp,117.59bp) and (257.43bp,130.21bp)  .. (node_25);
  \draw [black,->] (node_6) ..controls (288.74bp,59.926bp) and (285.37bp,61.153bp)  .. (282.3bp,62.123bp) .. controls (225.94bp,79.929bp) and (207.88bp,79.408bp)  .. (node_7);
  \draw [black,->] (node_6) ..controls (313.2bp,67.759bp) and (334.8bp,82.932bp)  .. (node_8);
  \draw [black,->] (node_6) ..controls (297.3bp,68.09bp) and (297.3bp,78.313bp)  .. (node_16);
  \draw [black,->] (node_6) ..controls (306.22bp,68.586bp) and (314.71bp,80.005bp)  .. (node_26);
  \draw [black,->] (node_7) ..controls (144.09bp,117.93bp) and (156.71bp,130.06bp)  .. (node_9);
  \draw [black,->] (node_7) ..controls (126.85bp,117.29bp) and (122.81bp,127.85bp)  .. (node_17);
  \draw [black,->] (node_7) ..controls (135.51bp,117.29bp) and (139.32bp,127.85bp)  .. (node_27);
  \draw [black,->] (node_8) ..controls (355.75bp,109.04bp) and (352.39bp,110.26bp)  .. (349.3bp,111.18bp) .. controls (285.26bp,130.39bp) and (264.63bp,127.36bp)  .. (node_9);
  \draw [black,->] (node_8) ..controls (361.35bp,117.22bp) and (358.7bp,127.61bp)  .. (node_18);
  \draw [black,->] (node_8) ..controls (370.03bp,117.36bp) and (375.27bp,128.09bp)  .. (node_28);
  \draw [black,->] (node_9) ..controls (179.81bp,166.21bp) and (179.38bp,176.44bp)  .. (node_19);
  \draw [black,->] (node_9) ..controls (188.66bp,166.64bp) and (196.55bp,177.88bp)  .. (node_29);
  \draw [black,->] (node_10) ..controls (219.28bp,60.361bp) and (216.18bp,61.336bp)  .. (213.3bp,62.123bp) .. controls (145.9bp,80.579bp) and (125.11bp,79.495bp)  .. (node_11);
  \draw [black,->] (node_10) ..controls (212.32bp,69.225bp) and (193.76bp,82.26bp)  .. (node_12);
  \draw [black,->] (node_10) ..controls (241.45bp,60.082bp) and (245.04bp,61.173bp)  .. (248.3bp,62.123bp) .. controls (302.99bp,78.036bp) and (319.61bp,80.527bp)  .. (node_14);
  \draw [black,->] (node_10) ..controls (248.29bp,69.225bp) and (266.84bp,82.26bp)  .. (node_16);
  \draw [black,->] (node_10) ..controls (238.41bp,68.515bp) and (246.06bp,79.76bp)  .. (node_30);
  \draw [black,->] (node_11) ..controls (36.501bp,117.15bp) and (38.456bp,127.37bp)  .. (node_13);
  \draw [black,->] (node_11) ..controls (54.877bp,117.91bp) and (78.83bp,132.18bp)  .. (node_17);
  \draw [black,->] (node_11) ..controls (27.793bp,117.44bp) and (21.771bp,128.33bp)  .. (node_31);
  \draw [black,->] (node_12) ..controls (135.95bp,116.38bp) and (88.289bp,135.07bp)  .. (node_13);
  \draw [black,->] (node_12) ..controls (184.69bp,117.63bp) and (211.64bp,132.74bp)  .. (node_15);
  \draw [black,->] (node_12) ..controls (176.09bp,117.93bp) and (188.71bp,130.06bp)  .. (node_32);
  \draw [black,->] (node_13) ..controls (72.396bp,164.86bp) and (129.57bp,184.79bp)  .. (node_19);
  \draw [black,->] (node_13) ..controls (52.222bp,166.71bp) and (60.714bp,178.13bp)  .. (node_33);
  \draw [black,->] (node_14) ..controls (385.24bp,109.27bp) and (382.14bp,110.28bp)  .. (379.3bp,111.18bp) .. controls (338.99bp,124.08bp) and (291.65bp,138.9bp)  .. (node_15);
  \draw [black,->] (node_14) ..controls (384.89bp,117.86bp) and (373.72bp,129.81bp)  .. (node_18);
  \draw [black,->] (node_14) ..controls (402.81bp,117.44bp) and (408.83bp,128.33bp)  .. (node_34);
  \draw [black,->] (node_15) ..controls (228.68bp,167.42bp) and (209.29bp,180.64bp)  .. (node_19);
  \draw [black,->] (node_15) ..controls (247.55bp,166.21bp) and (247.76bp,176.44bp)  .. (node_35);
  \draw [black,->] (node_16) ..controls (286.17bp,109.2bp) and (282.58bp,110.28bp)  .. (279.3bp,111.18bp) .. controls (217.34bp,128.22bp) and (198.47bp,129.18bp)  .. (node_17);
  \draw [black,->] (node_16) ..controls (311.82bp,118.07bp) and (326.41bp,130.56bp)  .. (node_18);
  \draw [black,->] (node_16) ..controls (302.28bp,117.36bp) and (306.84bp,128.09bp)  .. (node_36);
  \draw [black,->] (node_17) ..controls (130.75bp,167.35bp) and (148.75bp,180.38bp)  .. (node_19);
  \draw [black,->] (node_17) ..controls (120.86bp,166.57bp) and (127.91bp,177.64bp)  .. (node_37);
  \draw [black,->] (node_18) ..controls (341.16bp,158.23bp) and (337.57bp,159.32bp)  .. (334.3bp,160.25bp) .. controls (276.54bp,176.65bp) and (259.1bp,178.74bp)  .. (node_19);
  \draw [black,->] (node_18) ..controls (352.3bp,166.21bp) and (352.3bp,176.44bp)  .. (node_38);
  \draw [black,->] (node_19) ..controls (182.51bp,215.42bp) and (186.32bp,225.98bp)  .. (node_39);
  \draw [black,->] (node_20) ..controls (254.18bp,60.176bp) and (250.59bp,61.252bp)  .. (247.3bp,62.123bp) .. controls (179.75bp,80.024bp) and (158.95bp,78.914bp)  .. (node_21);
  \draw [black,->] (node_20) ..controls (247.32bp,69.225bp) and (228.76bp,82.26bp)  .. (node_22);
  \draw [black,->] (node_20) ..controls (276.34bp,60.275bp) and (279.45bp,61.267bp)  .. (282.3bp,62.123bp) .. controls (336.86bp,78.485bp) and (353.47bp,81.001bp)  .. (node_24);
  \draw [black,->] (node_20) ..controls (283.29bp,69.225bp) and (301.84bp,82.26bp)  .. (node_26);
  \draw [black,->] (node_20) ..controls (264.57bp,68.09bp) and (263.92bp,78.313bp)  .. (node_30);
  \draw [black,->] (node_21) ..controls (71.501bp,117.15bp) and (73.456bp,127.37bp)  .. (node_23);
  \draw [black,->] (node_21) ..controls (89.877bp,117.91bp) and (113.83bp,132.18bp)  .. (node_27);
  \draw [black,->] (node_21) ..controls (53.02bp,118.22bp) and (36.362bp,131.07bp)  .. (node_31);
  \draw [black,->] (node_22) ..controls (170.95bp,116.38bp) and (123.29bp,135.07bp)  .. (node_23);
  \draw [black,->] (node_22) ..controls (219.69bp,117.63bp) and (246.64bp,132.74bp)  .. (node_25);
  \draw [black,->] (node_22) ..controls (201.74bp,117.22bp) and (204.83bp,127.61bp)  .. (node_32);
  \draw [black,->] (node_23) ..controls (107.4bp,164.86bp) and (164.57bp,184.79bp)  .. (node_29);
  \draw [black,->] (node_23) ..controls (78.302bp,166.21bp) and (78.302bp,176.44bp)  .. (node_33);
  \draw [black,->] (node_24) ..controls (400.35bp,115.43bp) and (333.95bp,136.4bp)  .. (node_25);
  \draw [black,->] (node_24) ..controls (419.89bp,117.86bp) and (408.72bp,129.81bp)  .. (node_28);
  \draw [black,->] (node_24) ..controls (429.1bp,117.15bp) and (427.15bp,127.37bp)  .. (node_34);
  \draw [black,->] (node_25) ..controls (263.68bp,167.42bp) and (244.29bp,180.64bp)  .. (node_29);
  \draw [black,->] (node_25) ..controls (273.64bp,166.71bp) and (265.39bp,178.13bp)  .. (node_35);
  \draw [black,->] (node_26) ..controls (321.17bp,109.2bp) and (317.58bp,110.29bp)  .. (314.3bp,111.18bp) .. controls (251.94bp,128.28bp) and (232.78bp,128.64bp)  .. (node_27);
  \draw [black,->] (node_26) ..controls (346.82bp,118.07bp) and (361.41bp,130.56bp)  .. (node_28);
  \draw [black,->] (node_26) ..controls (328.61bp,117.22bp) and (325.3bp,127.61bp)  .. (node_36);
  \draw [black,->] (node_27) ..controls (165.75bp,167.35bp) and (183.75bp,180.38bp)  .. (node_29);
  \draw [black,->] (node_27) ..controls (147.08bp,166.21bp) and (145.99bp,176.44bp)  .. (node_37);
  \draw [black,->] (node_28) ..controls (376.16bp,158.23bp) and (372.57bp,159.32bp)  .. (369.3bp,160.25bp) .. controls (311.54bp,176.65bp) and (294.1bp,178.74bp)  .. (node_29);
  \draw [black,->] (node_28) ..controls (378.38bp,166.71bp) and (369.89bp,178.13bp)  .. (node_38);
  \draw [black,->] (node_29) ..controls (208.85bp,215.42bp) and (204.81bp,225.98bp)  .. (node_39);
  \draw [black,->] (node_30) ..controls (251.31bp,109.53bp) and (248.2bp,110.48bp)  .. (245.3bp,111.18bp) .. controls (152.92bp,133.67bp) and (124.38bp,125.32bp)  .. (node_31);
  \draw [black,->] (node_30) ..controls (249.18bp,118.0bp) and (236.11bp,130.31bp)  .. (node_32);
  \draw [black,->] (node_30) ..controls (273.46bp,109.12bp) and (277.04bp,110.21bp)  .. (280.3bp,111.18bp) .. controls (332.07bp,126.65bp) and (347.53bp,130.34bp)  .. (node_34);
  \draw [black,->] (node_30) ..controls (276.82bp,118.07bp) and (291.41bp,130.56bp)  .. (node_36);
  \draw [black,->] (node_31) ..controls (27.197bp,167.42bp) and (46.862bp,180.64bp)  .. (node_33);
  \draw [black,->] (node_31) ..controls (37.396bp,164.86bp) and (94.571bp,184.79bp)  .. (node_37);
  \draw [black,->] (node_32) ..controls (183.42bp,164.86bp) and (126.67bp,184.79bp)  .. (node_33);
  \draw [black,->] (node_32) ..controls (221.48bp,166.71bp) and (230.21bp,178.13bp)  .. (node_35);
  \draw [black,->] (node_33) ..controls (104.8bp,214.43bp) and (150.58bp,232.85bp)  .. (node_39);
  \draw [black,->] (node_34) ..controls (411.15bp,158.22bp) and (407.57bp,159.31bp)  .. (404.3bp,160.25bp) .. controls (355.25bp,174.3bp) and (297.07bp,189.37bp)  .. (node_35);
  \draw [black,->] (node_34) ..controls (403.41bp,167.42bp) and (383.74bp,180.64bp)  .. (node_38);
  \draw [black,->] (node_35) ..controls (234.39bp,216.13bp) and (220.54bp,228.43bp)  .. (node_39);
  \draw [black,->] (node_36) ..controls (306.16bp,158.23bp) and (302.57bp,159.32bp)  .. (299.3bp,160.25bp) .. controls (241.54bp,176.65bp) and (224.1bp,178.74bp)  .. (node_37);
  \draw [black,->] (node_36) ..controls (326.22bp,166.71bp) and (334.71bp,178.13bp)  .. (node_38);
  \draw [black,->] (node_37) ..controls (156.95bp,216.13bp) and (170.54bp,228.43bp)  .. (node_39);
  \draw [black,->] (node_38) ..controls (320.37bp,213.35bp) and (248.88bp,234.78bp)  .. (node_39);
\end{tikzpicture}
}

\end{tabular}\end{center}
\end{mexample}
\subsubsection{More exemples of probability functions polytopes}
\label{sec:orgc08e42e}

\begin{mexample}
We take \(P=C_{2} \times C_{2}\) and calculate \(\twoanti{P}\) and \(\prpolytope{P}\).
We do the same for \(P=C_{2} \times C_{3}\) and for the so-called ``Pentagon poset''.
The polytope  \(\prpolytope{C_{2} \times C_{2}}\) is a line segment; we show its edge graph.
The other polytopes live in \(\RR^{6}\) and \(\RR^{4}\), so they are first projected down
to their affine hulls (of dimensions two and three) before the are displayed.
\phantomsection
\label{}
\begin{center}
\end{center}

For the Pentagon poset, the polytope \(\prpolytope{P}\) is two-dimensional, but since there are 4 ordered pairs
of parallel elements, in the most convenient presentation it  lives in \(\RR^{4}\), where it
is defined by (according to SageMath \autocite{sagemath})
\begin{displaymath}
\left(x_{0} + x_{3} = 1, x_{1} + x_{2} = 1, -x_{0} + x_{1} \geq 0, -x_{1} \geq -1, x_{0} \geq 0\right)
\end{displaymath}
Here, the computer have numbered the variables
\[x_{0}= z_{(4,2)}, \, x_{1}= z_{(3,2)}, \, x_{2} = z_{(2,3)}, \, x_{3}= z_{(2,4).}\]
The non-trivial inequality stems from \(\pi(2,3) \le \pi(2,4)\) and \(\pi(3,2) \ge \pi(4,2)\)
for all probability functions, since \(3 \le 4\) in the poset \(P\),
and the equation comes from \[\pi(2,3)+\pi(3,2)=1=\pi(4,2)+\pi(2,4).\]
\end{mexample}
\subsection{The order polytope of a finite poset}
\label{sec:org18135af}
We will describe the probability functions polytope \(\prpolytope{P}\) in terms of the \textbf{order polytope}
of the posets we introduced in the earlier section. Recall
\autocites{StanleyTwoPosetPolytopes}[][]{AhmadOrderChain}[][]{HibiUnimodularOrderpolytopes}[][]{HibiEdgesOrderpolytope}[][]{vonBellTriangulationsOrderpolytopes}
that when \(Q=\{q_{1},\dots,q_{n}\}\) is a finite poset,
the order polytope \(\orderpolytope{Q}\) is the subset of \(\RR^{n} = \RR^{Q}\) consisting of
those \(f: Q \to \RR\) such that
\begin{align}
\label{eq:order-polytope1}
  0 \le f(x) \le 1, & \text{ for all } x \in Q, \\
  f(x) \le f(y), & \text{ if } x \le y \text{ in } Q,
\end{align}
or equivalently
\begin{align}
\label{eq:order-polytope2}
  0 \le f(x), & \text{ if } x \in Q \text{ is a minimal element, } \\
  f(x) \le 1, & \text{ if } x \in Q \text{ is a maximal element, } \\
  f(x) \le f(y), & \text{ if } x \lessdot y \text{ in } Q.
\end{align}

We denote by \(\hat{Q}\) the poset obtained from \(Q\) by
adjoining a minimal element \(\hat{0}\) and a maximal element \(\hat{1}\).
Then a polytope which is combinatorially equivalent to \(\orderpolytope{Q}\) can be obtained by setting \(\orderpolytopebounds{Q}\)
to all functions \(g \in \RR^{\hat{Q}}\) such that
\begin{align}
\label{eq:order-polytope3}
  g(\hat{0}) = 0, \\
  g(\hat{1}) = 1, \\
  g(x) \le g(y), & \text{ if } x \le y \text{ in } Q.
\end{align}
This has the benefit of having just a single type of conditions, with the exceptions of the equations for the
added elements; on the other hand, the ambient dimension increases by two.
The polytopes are \emph{combinatorially equivalent}
in the case under consideration, since the restriction map
\begin{align*}
\rho: \orderpolytopebounds{Q} & \to \orderpolytope{Q} \\
\rho(g)(q) & = g(q)
\end{align*}
is a linear bijection, and hence defines a combinatorial equivalence of polytopes.

\begin{remark}
Be aware that some authors define the order polytope as all \emph{order-reversing} maps
from \(Q\) to \([0,1]\).
\end{remark}

A few more properties of order polytopes:
\begin{itemize}
\item They are 0/1-polytopes, i.e., the coordinates of their vertices are 0 or 1,
\item in particular, they are lattice polytopes,
\item the dimension \(d\) of \(\orderpolytope{Q}\) is the number of elements in \(Q\),
\item the volume of \(\orderpolytope{Q}\) is \(e(Q)/d!\) where \(e(Q)\) counts the number of linear (total)
extensions of \(Q\),
\item they are \emph{distributive polytopes} meaning that componentwise min and max of points in the polytope
remains in the polytope,
\item the facet-inducing inequalities of \(\orderpolytope{Q}\) (or \(\orderpolytopebounds{Q}\)) are those
of (\ref{eq:order-polytope1}) (of (\ref{eq:order-polytope2})) that stem from covering relations.
\end{itemize}
\subsection{The face lattice structure of the order polytope}
\label{sec:orgf1b747c}
The following theorem by Geissinger \autocite{geissingerpolytope} (see also \autocite{StanleyTwoPosetPolytopes}),
and its corollary, describes the face-lattice --- in particular the vertices --- of \(\orderpolytope{Q}\) for
a finite poset \(Q\).

\begin{theorem}[Geissinger]
Let \(Q\) be a finite poset, and let \(\orderpolytope{Q}\) be its order polytope. Then the face lattice
of \(\orderpolytope{Q}\) is anti-isomorphic to the lattice of connected and compatible set partitions on \(Q\).
In particular, the vertices of \(\orderpolytope{Q}\) correspond to characteristic functions on order filters of \(Q\).
\label{thm-geissinger}
\end{theorem}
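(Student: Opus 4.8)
The plan is to route everything through the combinatorially equivalent model $\orderpolytopebounds{Q}$, i.e.\ the order-preserving $g \colon \hat{Q} \to [0,1]$ with $g(\hat{0})=0$ and $g(\hat{1})=1$; passing to $\hat{Q}$ has the decisive advantage that the two families of one-sided constraints ($0 \le f(x)$ at minimal elements and $f(x)\le 1$ at maximal elements) become instances of the \emph{single} family of cover-relation constraints $g(x)\le g(y)$ for $x \lessdot y$ in $\hat{Q}$. By the bullet list above, these cover-relation inequalities are exactly the facet-defining ones, so I may invoke the standard polyhedral fact that the relative interiors of the faces partition the polytope and that two points lie in the relative interior of a common face precisely when they make the same subfamily of facet inequalities tight. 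Accordingly I read ``connected and compatible set partitions on $Q$'' as the connected compatible partitions of $\hat{Q}$ that separate $\hat{0}$ and $\hat{1}$; this is the reading that makes the vertex correspondence literally correct, since a filter disconnected inside $Q$ is still joined to $\hat{1}$ in $\hat{Q}$.

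First I would define the map $F \mapsto \gamma(F)$. Fixing any $g$ in the relative interior of a face $F$, I form the graph on $\hat{Q}$ whose edges are the \emph{tight} cover relations $x \lessdot y$, those with $g(x)=g(y)$, and let $\gamma(F)$ be the partition into connected components; each block is connected by construction. Conversely, given a connected compatible partition $\gamma$ of $\hat{Q}$ separating $\hat{0},\hat{1}$, I set
\[
F_\gamma = \{\, g \in \orderpolytopebounds{Q} \mid g \text{ is constant on every block of } \gamma \,\},
\]
the face cut out by imposing equality in the cover relations internal to blocks. Both constructions are visibly inclusion-reversing for refinement: merging blocks imposes more equalities and shrinks the face, and the all-singletons partition corresponds to the whole polytope.

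The main obstacle is checking that $\gamma(F)$ is \emph{compatible} and that the two maps are mutually inverse; both hinge on the relative-interior choice. For compatibility, suppose the quotient pre-order on blocks of $\gamma(F)$ had a cycle $B_0 \le_\gamma B_1 \le_\gamma \cdots \le_\gamma B_k = B_0$. Since $g$ is monotone and constant on blocks, the block-values around the cycle are non-decreasing, hence all equal; saturating each relation $B_i \le_\gamma B_{i+1}$ into a chain of cover relations, equal endpoints plus monotonicity force every cover relation along it to be tight, so $B_i$ and $B_{i+1}$ already lie in one component — a contradiction unless the cycle is trivial. Thus $\gamma(F)$ is compatible. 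For $\gamma(F_\gamma)=\gamma$ I would choose a \emph{strictly} order-preserving $h \colon \hat{Q}/\gamma \to [0,1]$ with $h(\hat{0})=0,\ h(\hat{1})=1$ (available because $\hat{Q}/\gamma$ is a finite poset with global minimum and maximum); its pullback lies in the relative interior of $F_\gamma$, its tight cover relations are exactly those internal to blocks, and connectivity of the blocks recovers precisely $\gamma$. Together with $F_{\gamma(F)}=F$, which holds because the tight relations determine the affine span of $F$, this yields the asserted anti-isomorphism of lattices.

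Finally, for the vertices I would compute $\dim F_\gamma = (\text{number of blocks of } \gamma) - 2$, since $g$ is determined by its values on the blocks, two of which are pinned to $0$ and $1$ while the remaining order inequalities are strict on the relative interior. Hence a vertex is exactly a two-block partition, necessarily $\{\,(Q\setminus T)\cup\{\hat{0}\},\ T\cup\{\hat{1}\}\,\}$; monotonicity forces the value-$1$ block $T$ to be upward closed, i.e.\ an order filter, with complement an order ideal. Connectivity is automatic in $\hat{Q}$, as every $t\in T$ ascends to $\hat{1}$ through a saturated chain staying in $T$ and dually every element of the ideal descends to $\hat{0}$. The corresponding vertex is the characteristic function $\chi_T$, which is precisely the order-filter description claimed.
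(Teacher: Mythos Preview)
The paper does not actually prove this theorem: it is stated with attribution to Geissinger \autocite{geissingerpolytope} (and a pointer to Stanley \autocite{StanleyTwoPosetPolytopes}) and then used as a black box. So there is no in-paper proof to compare against.

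That said, your sketch is essentially the standard argument one finds in those references, and it is correct. Working in $\orderpolytopebounds{Q}$ so that all facet inequalities are of the uniform type $g(x)\le g(y)$ for $x\lessdot y$ in $\hat{Q}$ is exactly the right normalisation; your interpretation of ``connected compatible partitions on $Q$'' as connected compatible partitions of $\hat{Q}$ separating $\hat{0}$ and $\hat{1}$ is the intended one. The compatibility argument (a cycle among blocks forces equal values along a saturated chain, hence tightness, hence the blocks were already merged) is clean, and the construction of a relative-interior point of $F_\gamma$ via a strict linear extension of $\hat{Q}/\gamma$ is the standard way to show $\gamma(F_\gamma)=\gamma$. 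The dimension count $\dim F_\gamma = |\gamma|-2$ and the resulting identification of vertices with two-block partitions, hence with order filters via their characteristic functions, is exactly right. The only place I would ask you to say one more word is the line ``$F_{\gamma(F)}=F$ because the tight relations determine the affine span of $F$'': this is true, but the reason is that the cover-relation inequalities are precisely the facet-defining inequalities of $\orderpolytopebounds{Q}$, so a face is recovered from the set of facets containing it; you invoke this fact from the bullet list earlier, and it would be worth making that dependence explicit.
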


Hibi and Li \autocite{HibiCutting}\autocite{HibiEdgesOrderpolytope} provided the following description
of the edges of the order polytope \(\orderpolytope{P}\):
\begin{corollary}[Hibi and Li]
Let \(P\) be a finite poset.
Given filters \(S,T\) with \(S \neq T\), denote by \(\chi(S), \chi(T) \in \RR^{P}\) the corresponding characteristic functions.
Then the line segment connecting \(\chi(S)\) and \(\chi(T)\) is an edge of \(\orderpolytope{P}\)
if and only if \(S \subset T\) and \(T \setminus S\) is connected in \(P\)
(possibly after switching \(S\) and \(T\)).
\end{corollary}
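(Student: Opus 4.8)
The plan is to derive the statement directly from Geissinger's Theorem~\ref{thm-geissinger}, using the combinatorial equivalence $\orderpolytope{P}\simeq\orderpolytopebounds{P}$ so that I may work with partitions of the augmented poset $\hat{P}=P\cup\{\hat{0},\hat{1}\}$, where $\hat{0}$ is a new minimum and $\hat{1}$ a new maximum. Under this equivalence a vertex $\chi(F)$ of $\orderpolytope{P}$, coming from a filter $F$, corresponds to the two-block partition $\{(P\setminus F)\cup\{\hat{0}\},\,F\cup\{\hat{1}\}\}$ of $\hat{P}$, the first block carrying value $0$ and the second value $1$. The quantitative input is a dimension count: if a face corresponds to a connected, compatible partition $\gamma$ of $\hat{P}$ with $k$ blocks, then, since $g(\hat{0})=0$ and $g(\hat{1})=1$ are fixed while the values on the remaining $k-2$ blocks vary freely subject to the quotient order, that face has dimension $k-2$. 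Hence \emph{edges correspond exactly to connected, compatible partitions of $\hat{P}$ with precisely three blocks}: one containing $\hat{0}$, one containing $\hat{1}$, and one middle block $M\subseteq P$. This reduces everything to analysing three-block partitions.

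For the ``if'' direction, starting from filters $S\subset T$ with $M:=T\setminus S$ connected, I would form the candidate partition
\[
\gamma=\bigl\{\,B_{0}=(P\setminus T)\cup\{\hat{0}\},\quad M=T\setminus S,\quad B_{1}=S\cup\{\hat{1}\}\,\bigr\}
\]
and check that it is connected and compatible. Connectivity of $M$ is exactly the hypothesis; connectivity of $B_{0}$ and of $B_{1}$ is automatic, because $P\setminus T$ is an order ideal whose every element can be joined downward through the ideal to a minimal element of $P$ and thence to $\hat{0}$, and dually every element of the filter $S$ is joined upward to $\hat{1}$. Compatibility follows because no ``backward'' block relation can occur: a relation $M\le_{\gamma}B_{0}$ or $B_{1}\le_{\gamma}M$ would produce $a\le b$ with $a$ in one of the filters $T,S$ and $b$ outside it, contradicting the filter property, so the induced order on blocks is merely the chain $B_{0}\le_{\gamma}M\le_{\gamma}B_{1}$. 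By Geissinger's theorem $\gamma$ names a one-dimensional face, and reading off its two admissible two-block coarsenings (merge $M$ into $B_{0}$, or into $B_{1}$) shows that its endpoints are precisely $\chi(S)$ and $\chi(T)$.

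For the ``only if'' direction I would run this in reverse: an edge corresponds to a connected, compatible three-block partition $\{B_{0},M,B_{1}\}$ of $\hat{P}$ with $\hat{0}\in B_{0}$ and $\hat{1}\in B_{1}$. Its two vertices arise from the only two admissible coarsenings to two blocks, namely merging the middle block $M$ with $B_{0}$ or with $B_{1}$ (merging $B_{0}$ with $B_{1}$ is excluded, since it would force the value $0$ to equal $1$). These give filters $S=B_{1}\cap P$ and $T=(B_{1}\cup M)\cap P=S\cup M$, so the endpoints are $\chi(S),\chi(T)$ with $S\subset T$ and $T\setminus S=M$; moreover $M$ is connected, because $\gamma$ is a connected partition, the block $M$ is therefore connected in $\hat{P}$, and since $M\subseteq P$ every connecting path lies within $P$.

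The main obstacle is not any single hard computation but assembling the bookkeeping cleanly: fixing the precise face-to-partition dictionary (the dimension formula $\dim=k-2$ and the passage between $\orderpolytope{P}$ and $\orderpolytopebounds{P}$), and then verifying that, among the requirements on the middle configuration, only the connectivity of $T\setminus S$ is a genuine constraint, with compatibility (Definition~\ref{def-set-partition-compatible}) and the connectivity (Definition~\ref{def-set-partition-connected}) of the two outer blocks holding automatically for nested filters. A secondary point to handle carefully is that the two named filters really are the \emph{only} vertices of the face, which is what pins the edge down to the single pair $(S,T)$ rather than to some larger face.
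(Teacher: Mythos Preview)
The paper does not supply a proof of this corollary; it is quoted as a result of Hibi and Li with a citation, immediately after Geissinger's theorem, and is then used as a black box. So there is no ``paper's own proof'' to compare against.

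Your derivation from Geissinger's theorem is correct and is in fact the standard way to extract the edge description from the face-lattice description. A couple of small points you could tighten: (i) the claim that $\hat{0}$ and $\hat{1}$ must lie in distinct blocks of any compatible partition deserves one sentence (if they share a block $B$, then $B\le_\gamma C\le_\gamma B$ for every other block $C$, violating antisymmetry); (ii) in the ``only if'' direction, the fact that $S=B_1\cap P$ and $T=S\cup M$ are genuinely \emph{filters} is where compatibility is used, and you rely on it implicitly---it is worth spelling out that a relation $B_1\le_\gamma M$ or $M\le_\gamma B_0$ would contradict compatibility, which forces upward closure of $S$ and $T$; (iii) when you argue that merging $B_0$ with $B_1$ is excluded, the cleanest reason is combinatorial rather than the ``$0=1$'' heuristic: the resulting two-block partition fails compatibility because the block containing both $\hat{0}$ and $\hat{1}$ would be both below and above $M$. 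None of these are gaps, just places where an extra line would make the write-up self-contained.
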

\subsection{Relation between the probability functions polytope and the order polytope of the antichains posets}
\label{sec:orgccf695c}
\begin{theorem}
The probability functions polytope \(\prpolytope{P}\) is the intersection of the order polytope
\(\orderpolytope{\twoanti{P}} \subset \RR^{\twoanti{P}}\)
with the affine subspace \(H\) cut out by the equations corresponding to \(\pi(x,y) + \pi(y,x)=1\).
\label{thm-poset2polytope}
\end{theorem}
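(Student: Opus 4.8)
The plan is to prove the set equality $\prpolytope{P} = \orderpolytope{\twoanti{P}} \cap H$ by establishing the two inclusions, after observing that both polytopes sit in the same ambient space $\RR^{\twoanti{P}} = \RR^{2m}$, whose coordinates are indexed by the ordered antichain pairs $(x,y)$ with $x \parallel y$. By Theorem \ref{thm-embedd}, a point of $\prpolytope{P}$ is precisely the tuple $(\pi(x,y))_{(x,y)\in\twoanti{P}}$ of antichain values of a probability function, the comparable-pair values being forced to $0$ or $1$; so the two descriptions are compared coordinate for coordinate.

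First I would show $\prpolytope{P} \subseteq \orderpolytope{\twoanti{P}} \cap H$. Given a probability function $\pi$, its antichain values automatically satisfy the box constraints $0 \le \pi(x,y) \le 1$, and condition (2) of Definition \ref{def-probfunc-reduced} places them in $H$. For the order-polytope inequalities it suffices to check $\pi(\alpha) \le \pi(\beta)$ on the generators $\terel = \tarel \cup \tbrel$, since $\transterel$ is their transitive closure and $\le$ is transitive on $\RR$. A relation $(x,y)\tarel(x,v)$ means $y \le v$, which is exactly the hypothesis of condition (3) and yields $\pi(x,y)\le\pi(x,v)$; a relation $(x,y)\tbrel(u,y)$ means $u \le x$, which is the hypothesis of condition (4) and yields $\pi(x,y)\le\pi(u,y)$. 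Hence the restriction of $\pi$ respects $\transterel$ and lies in $\orderpolytope{\twoanti{P}}$.

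For the reverse inclusion I would take $f \in \orderpolytope{\twoanti{P}} \cap H$ and extend it to $\pi\colon\Ppairs\to[0,1]$ by $\pi(x,y)=1$ for $x<y$, $\pi(x,y)=0$ for $x>y$, and $\pi(x,y)=f(x,y)$ for $x\parallel y$, then verify Definition \ref{def-probfunc-reduced}. Conditions (1) and (2) are immediate, the latter using $f\in H$ on antichain pairs and $1+0=1$ on comparable pairs. A convenient reduction is that, granted (2), conditions (3) and (4) are equivalent: substituting $\pi(z,x)=1-\pi(x,z)$ and $\pi(y,x)=1-\pi(x,y)$ converts (4) into (3). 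Thus it remains only to establish (3) for all $y<z$ with $x\notin\{y,z\}$, and I would argue by casing on the comparability of $x$ with $y$ and with $z$.

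The main obstacle, and the substance of the proof, is this case analysis for condition (3). The one genuinely non-trivial case is when both $(x,y)$ and $(x,z)$ are antichains: there $(x,y)\tarel(x,z)$, so $\pi(x,y)=f(x,y)\le f(x,z)=\pi(x,z)$ holds by the order-polytope hypothesis. Every remaining case is forced by the $0/1$ boundary values together with transitivity of $\le$ in $P$: if $x<y$ then $x<z$ and both sides equal $1$; if $x>z$ then $x>y$ and both sides equal $0$; if $x>y$ (with $x\not>z$) then $\pi(x,y)=0\le\pi(x,z)$; and if $x<z$ (with $x\not<y$) then $\pi(x,z)=1\ge\pi(x,y)$. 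The leftover combinations, such as $x\parallel y$ with $x>z$, or $x\parallel z$ with $x<y$, cannot occur, being ruled out by transitivity since $y<z$. This settles (3), hence (4), so $\pi$ is a probability function and $f\in\prpolytope{P}$, completing the equality.
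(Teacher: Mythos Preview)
Your proof is correct and follows the same approach as the paper: identifying the defining constraints of $\prpolytope{P}$ (restricted to antichain coordinates) with those of $\orderpolytope{\twoanti{P}}$ together with the equations of $H$. The paper's proof is considerably terser---it simply lists the four conditions on $f:\twoanti{P}\to\RR$ and observes that conditions 1--3 cut out the order polytope while condition~4 cuts out $H$---whereas you supply the careful case analysis showing that, for the reverse inclusion, the monotonicity condition~(3) of Definition~\ref{def-probfunc-reduced} holds even when one or both of the pairs $(x,y),(x,z)$ is comparable rather than an antichain. That verification (forced $0/1$ values plus transitivity in $P$) is exactly the content the paper leaves implicit, so your write-up is a strictly more complete version of the same argument.
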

\begin{proof}
We regard the probability functions polytope  as the polytope \(\prpolytope{P} \subset \RR^{{\twoanti{P}}}\) of all functions
\(f: \twoanti{P} \to \RR\), subject to
\begin{enumerate}
\item \(0 \le f(x,y) \le 1\),
\item \(f((x,y)) \le f((u,v))\), if \(x=u\) and \(y \le v\) in \(P\),
\item \(f((x,y)) \le f((u,v))\), if \(x \ge u\) in \(P\) and \(y =v\),
\item \(f((x,y)) + f((u,v)) = 1\) if \((u,v) = (y,x)\).
\end{enumerate}
The second and third conditions can be summarized as
\(f((x,y)) \le f((u,v))\) if \((x,y) \transerel (u,v)\), hence the first three conditions are simply
the inequalities cutting out the order polytope \(\orderpolytope{\twoanti{P}}\).
The equations in condition 4 are the equations defining the affine subspace \(H\).
\end{proof}
\subsection{The vertices of the probability functions polytope}
\label{sec:org7e0e5be}
Since \(\prpolytope{P} = \orderpolytope{\twoanti{P}} \cap H\), we can say something about the vertices:
\begin{theorem}
The vertices of \(\prpolytope{P}\) which have 0/1 coordinates corresponds to order filters \(S \subset \twoanti{P}\) with the property
that
\[S \ni u \iff \tau(u) \not \in S.\]
\end{theorem}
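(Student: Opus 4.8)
The plan is to exploit the identity $\prpolytope{P} = \orderpolytope{\twoanti{P}} \cap H$ from Theorem~\ref{thm-poset2polytope} together with Geissinger's description (Theorem~\ref{thm-geissinger}) of the vertices of an order polytope. First I would record the standard fact that the $0/1$ points of $\orderpolytope{\twoanti{P}}$ are precisely the characteristic functions $\chi(S)$ of order filters $S \subseteq \twoanti{P}$, and that these are exactly its vertices: a $0/1$ vector $\chi(T)$ satisfies the inequalities $f((x,y)) \le f((u,v))$ for $(x,y) \transterel (u,v)$ if and only if $T$ is upward closed, i.e.\ a filter, which is Geissinger's correspondence restated for $0/1$ points. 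This reduces everything to tracking how the equations defining $H$ interact with filters.

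For the forward direction, suppose $v$ is a vertex of $\prpolytope{P}$ with $0/1$ coordinates. Since $\prpolytope{P} \subseteq \orderpolytope{\twoanti{P}}$, the point $v$ is a $0/1$ point of the order polytope, hence $v = \chi(S)$ for some order filter $S$. Because $v \in \prpolytope{P} \subseteq H$, the defining equations $f(u) + f(\tau(u)) = 1$ give $\chi(S)(u) + \chi(S)(\tau(u)) = 1$ for every $u \in \twoanti{P}$; as these values lie in $\{0,1\}$, this says exactly that precisely one of $u, \tau(u)$ lies in $S$, i.e.\ $S \ni u \iff \tau(u) \notin S$.

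For the converse, given an order filter $S$ with $S \ni u \iff \tau(u) \notin S$, the same computation shows $\chi(S) \in H$, so $\chi(S) \in \orderpolytope{\twoanti{P}} \cap H = \prpolytope{P}$. It then remains to see that $\chi(S)$ is a vertex of $\prpolytope{P}$, not merely a point of it. Here I would invoke the elementary fact that an extreme point of a polytope $Q$ that happens to lie in a subpolytope $Q' \subseteq Q$ is automatically an extreme point of $Q'$: any expression of $\chi(S)$ as a proper convex combination of points of $\prpolytope{P}$ would also be such an expression inside $\orderpolytope{\twoanti{P}}$, contradicting that $\chi(S)$ is a vertex there. This establishes the claimed bijection between $0/1$ vertices of $\prpolytope{P}$ and the $\tau$-complementary filters.

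The argument is short, and the only step demanding genuine care is the identification of $0/1$ points of $\prpolytope{P}$ with vertices of the ambient order polytope, since both the ``$0/1$ point $\Rightarrow$ filter'' implication and the transfer of extremality to the subpolytope must be handled cleanly. The main conceptual subtlety — and the reason the statement is phrased only for $0/1$ vertices — is that $\prpolytope{P}$ generally acquires additional, \emph{non-integral} vertices from the intersection with $H$ (as the $C_2 \times C_2 \times C_2$ example shows), which this correspondence deliberately does not attempt to capture.
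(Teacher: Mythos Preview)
Your proposal is correct and follows essentially the same approach as the paper: both use Theorem~\ref{thm-poset2polytope} to write $\prpolytope{P} = \orderpolytope{\twoanti{P}} \cap H$, invoke Geissinger's theorem to identify the $0/1$ points of the order polytope with characteristic functions of filters, and then read off the $\tau$-complementarity condition from the defining equations of $H$. Your version is in fact slightly more careful than the paper's, since you make explicit the converse direction (that every $\tau$-complementary filter gives a vertex of $\prpolytope{P}$, via the transfer-of-extremality argument), whereas the paper's proof leaves this implicit.
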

\begin{proof}
Let the ordered antichains of \(P\) be
\[
\{(p_{i_{1}}, p_{j_{1}}), \dots, (p_{i_{m}}, p_{j_{m}}), (p_{j_{1}}, p_{i_{1}}), \dots, (p_{j_{m}}, p_{j_{1}}) \}.
\]
The order polytope
\[\orderpolytope{\twoanti{P}} \subset [0,1]^{2m}\] have as vertices the characteristic functions on
order filters of \(\twoanti{P}\), by Geissinger's theorem. To get \(\prpolytope{P}\),
we intersect with the affine subspace \(H \subset \RR^{2m}\) cut out by by the \(m\) equations
\[x_{a,b} + x_{b,a} = 1,\]
where the variable \(x_{a,b}\) is associated with the ordered antichain
\[(p_{i_{a}},p_{j_{b}}) \in \twoanti{P}.\]

The resulting polytope \[H \cap \prpolytope{P} \subset H\] has, as its vertices,  those vertices of
\(\prpolytope{P}\) that belong to \(H\), and potentially new vertices arising from
the fact that the hyperplanes defining \(H\) may not ``cut'' the order polytope
in the sense of \autocite{HibiCutting}.

However, any vertex of \(\prpolytope{P}\) with 0/1 coordinates was already a vertex of
\(\orderpolytope{\twoanti{P}}\). The characteristic function/vector of a such vertex in
\(\prpolytope{P}\) is a zero-one vector, and belong to \(H\) iff the coordinate associated to
the variable \(x_{a,b}\) is zero iff \(x_{b,a}\) is one. In other words,
the filter of which the characteristic function is the characteristic function should contain
precisely one of each pair
\[
(p_{i_{a}},p_{j_{b}}), \, (p_{i_{b}},p_{j_{a}}).
\]
\end{proof}

\begin{mexample}
The pentagon poset \(P\) has a probability functions polytope \(\prpolytope{P} \subset \RR^{4}\) whose vertices
happens to be those vertices of \(\orderpolytope{\twoanti{P}}\) that also lie on \(H\).
That is to say, no new vertices arise from intersecting with \(H\).
We have projected the 2-dimensional \(\prpolytope{P} \subset \RR^{4}\) down to its affine hull
\(\RR^{2}\).
\phantomsection
\label{}
\begin{displaymath}\begin{array}{c|c|c|c|c}
 P & \twoanti{P} &  \text{Vertices of } \orderpolytope{\twoanti{P}} & \prpolytope{P}& \text{Vertices of } \prpolytope{P}  \\ \hline
\resizebox{!}{2.5cm}{
 \begin{tikzpicture}[>=latex,line join=bevel,]
\node (node_0) at (20.811bp,6.5307bp) [draw,draw=none] {$1$};
  \node (node_1) at (5.8112bp,104.65bp) [draw,draw=none] {$2$};
  \node (node_2) at (37.811bp,55.592bp) [draw,draw=none] {$3$};
  \node (node_4) at (20.811bp,153.71bp) [draw,draw=none] {$5$};
  \node (node_3) at (36.811bp,104.65bp) [draw,draw=none] {$4$};
  \draw [black,->] (node_0) ..controls (17.765bp,27.051bp) and (11.828bp,65.094bp)  .. (node_1);
  \draw [black,->] (node_0) ..controls (25.016bp,19.17bp) and (28.83bp,29.73bp)  .. (node_2);
  \draw [black,->] (node_1) ..controls (9.4985bp,117.22bp) and (12.81bp,127.61bp)  .. (node_4);
  \draw [black,->] (node_2) ..controls (37.567bp,68.09bp) and (37.35bp,78.313bp)  .. (node_3);
  \draw [black,->] (node_3) ..controls (32.854bp,117.29bp) and (29.264bp,127.85bp)  .. (node_4);
\end{tikzpicture}
}

   &
\resizebox{2cm}{!}{
 \begin{tikzpicture}[>=latex,line join=bevel,]
\node (node_0) at (14.39bp,8.3018bp) [draw,draw=none] {$\left(4, 2\right)$};
  \node (node_1) at (14.39bp,60.906bp) [draw,draw=none] {$\left(3, 2\right)$};
  \node (node_2) at (61.39bp,8.3018bp) [draw,draw=none] {$\left(2, 3\right)$};
  \node (node_3) at (61.39bp,60.906bp) [draw,draw=none] {$\left(2, 4\right)$};
  \draw [black,->] (node_0) ..controls (14.39bp,22.834bp) and (14.39bp,32.769bp)  .. (node_1);
  \draw [black,->] (node_2) ..controls (61.39bp,22.834bp) and (61.39bp,32.769bp)  .. (node_3);
\end{tikzpicture}
}

   &
\left(\begin{array}{rrrr}
1 & 1 & 0 & 1 \\
1 & 1 & 0 & 0 \\
1 & 1 & 1 & 1 \\
0 & 1 & 1 & 1 \\
0 & 0 & 1 & 1 \\
0 & 0 & 0 & 1 \\
0 & 1 & 0 & 1 \\
0 & 0 & 0 & 0 \\
0 & 1 & 0 & 0
\end{array}\right)
   &
\begin{tikzpicture}%
	[scale=1.000000,
	back/.style={loosely dotted, thin},
	edge/.style={color=blue!95!black, thick},
	facet/.style={fill=blue!95!black,fill opacity=0.800000},
	vertex/.style={inner sep=1pt,circle,draw=green!25!black,fill=green!75!black,thick}]
%
%
\coordinate (0.00000, 0.00000) at (0.00000, 0.00000);
\coordinate (0.00000, 1.00000) at (0.00000, 1.00000);
\coordinate (1.00000, 1.00000) at (1.00000, 1.00000);
\fill[facet] (1.00000, 1.00000) -- (0.00000, 0.00000) -- (0.00000, 1.00000) -- cycle {};
\draw[edge] (0.00000, 0.00000) -- (0.00000, 1.00000);
\draw[edge] (0.00000, 0.00000) -- (1.00000, 1.00000);
\draw[edge] (0.00000, 1.00000) -- (1.00000, 1.00000);
\node[vertex] at (0.00000, 0.00000)     {};
\node[vertex] at (0.00000, 1.00000)     {};
\node[vertex] at (1.00000, 1.00000)     {};
\end{tikzpicture}
   &
\left(\begin{array}{rrrr}
0 & 1 & 0 & 1 \\
0 & 0 & 1 & 1 \\
1 & 1 & 0 & 0
\end{array}\right)
 \\
\end{array}\end{displaymath}
\end{mexample}

\begin{mexample}
Surprisingly enough, \(\prpolytope{P}\) need not be a 0/1-polytope; new vertices may
occur in \(H \cap \orderpolytope{\twoanti{P}}\) that do not belong to \(\orderpolytope{\twoanti{P}}\).
The smallest example that we have found where this happens is the
poset \(P=C_{2} \times C_{2} \times C_{2}\) studied in Example \ref{example-C2C2C2}.
Since we use the polytope library of the compututer algebra system SageMath \autocite{sagemath},
the variables, corresponding to the ordered pairs of parallel elements in \(P\), will be zero-indexed.
The ordered antichains and their numberings are
\phantomsection
\label{}
\begin{center}
\begin{tabular}{lrrrrrrrrr}
nr & 0 & 1 & 2 & 3 & 4 & 5 & 6 & 7 & 8\\
antichain & (7, 2) & (7, 4) & (7, 6) & (6, 3) & (6, 4) & (6, 7) & (5, 3) & (5, 2) & (5, 4)\\
nr & 9 & 10 & 11 & 12 & 13 & 14 & 15 & 16 & 17\\
antichain & (4, 5) & (4, 6) & (4, 7) & (3, 5) & (3, 2) & (3, 6) & (2, 5) & (2, 3) & (2, 7)\\
\end{tabular}
\end{center}

The affine subspace \(H\) is cut out by the equalities

\phantomsection
\label{}
\begin{displaymath}\begin{array}{ccccc}
x_{0} + x_{17} = 1
 &
x_{13} + x_{16} = 1
 &
x_{7} + x_{15} = 1
 &
x_{3} + x_{14} = 1
 \\
x_{6} + x_{12} = 1
 &
x_{1} + x_{11} = 1
 &
x_{4} + x_{10} = 1
 &
x_{8} + x_{9} = 1
 \\
x_{2} + x_{5} = 1
 &
\end{array}\end{displaymath}

and the order polytope \(\orderpolytope{\twoanti{C_{2} \times C_{2} \times C_{2}}}\) is defined by
the trivial inequalities \(0 \le x_i \le 1\) and the non-trivial inequalities

\phantomsection
\label{}
\begin{displaymath}\begin{array}{cccc}
-x_{0} + x_{13} \geq 0
 &
-x_{0} + x_{7} \geq 0
 &
-x_{0} + x_{2} \geq 0
 &
-x_{0} + x_{1} \geq 0
 \\
-x_{1} + x_{8} \geq 0
 &
-x_{2} + x_{14} \geq 0
 &
-x_{3} + x_{16} \geq 0
 &
-x_{3} + x_{6} \geq 0
 \\
-x_{3} + x_{5} \geq 0
 &
-x_{3} + x_{4} \geq 0
 &
-x_{4} + x_{8} \geq 0
 &
-x_{5} + x_{17} \geq 0
 \\
-x_{6} + x_{8} \geq 0
 &
-x_{7} + x_{8} \geq 0
 &
-x_{9} + x_{15} \geq 0
 &
-x_{9} + x_{12} \geq 0
 \\
-x_{9} + x_{11} \geq 0
 &
-x_{9} + x_{10} \geq 0
 &
-x_{10} + x_{14} \geq 0
 &
-x_{11} + x_{17} \geq 0
 \\
-x_{12} + x_{14} \geq 0
 &
-x_{13} + x_{14} \geq 0
 &
-x_{15} + x_{17} \geq 0
 &
-x_{16} + x_{17} \geq 0
 \\
\end{array}\end{displaymath}

The probability functions polytope
\[\prpolytope{C_{2} \times C_{2} \times C_{2}} = \orderpolytope{\twoanti{C_{2} \times C_{2} \times C_{2}}} \cap H\]
is defined by the irreduntant inequalities
\phantomsection
\label{}
\begin{displaymath}\begin{array}{cccc}
-x_{0} + x_{13} \geq 0
 &
-x_{0} + x_{7} \geq 0
 &
-x_{0} + x_{2} \geq 0
 &
-x_{0} + x_{1} \geq 0
 \\
-x_{1} + x_{8} \geq 0
 &
-x_{2} - x_{3} \geq -1
 &
-x_{3} - x_{13} \geq -1
 &
-x_{3} + x_{6} \geq 0
 \\
-x_{3} + x_{4} \geq 0
 &
-x_{4} + x_{8} \geq 0
 &
-x_{6} + x_{8} \geq 0
 &
-x_{7} + x_{8} \geq 0
 \\
-x_{8} \geq -1
 &
x_{3} \geq 0
 &
x_{0} \geq 0
 &
\end{array}\end{displaymath}
and equations
\phantomsection
\label{}
\begin{displaymath}\begin{array}{cccc}
x_{0} + x_{17} = 1
 &
x_{13} + x_{16} = 1
 &
x_{7} + x_{15} = 1
 &
x_{3} + x_{14} = 1
 \\
x_{6} + x_{12} = 1
 &
x_{1} + x_{11} = 1
 &
x_{4} + x_{10} = 1
 &
x_{8} + x_{9} = 1
 \\
x_{2} + x_{5} = 1
 &
\end{array}\end{displaymath}

Notice that the variable \(x_{{17}}\) has been removed from the list of inequalities, thanks to the
equation \(x_{0}+x_{17}=1\); in the projection down to \(\RR^{9}\), it disappears altogether.

There are 77 vertices of \(\prpolytope{C_{2} \times C_{2} \times C_{2}}\):

\phantomsection
\label{}
\begin{tiny}\begin{displaymath}\begin{array}{cc}
\left(0,\,1,\,1,\,0,\,0,\,0,\,0,\,0,\,1,\,0,\,1,\,0,\,1,\,0,\,1,\,1,\,1,\,1\right)
 &
\left(0,\,1,\,1,\,0,\,0,\,0,\,0,\,1,\,1,\,0,\,1,\,0,\,1,\,1,\,1,\,0,\,0,\,1\right)
 \\
\left(0,\,1,\,0,\,0,\,0,\,1,\,0,\,0,\,1,\,0,\,1,\,0,\,1,\,0,\,1,\,1,\,1,\,1\right)
 &
\left(0,\,1,\,1,\,0,\,0,\,0,\,0,\,0,\,1,\,0,\,1,\,0,\,1,\,1,\,1,\,1,\,0,\,1\right)
 \\
\left(0,\,1,\,0,\,0,\,0,\,1,\,0,\,1,\,1,\,0,\,1,\,0,\,1,\,1,\,1,\,0,\,0,\,1\right)
 &
\left(0,\,0,\,1,\,0,\,0,\,0,\,0,\,1,\,1,\,0,\,1,\,1,\,1,\,1,\,1,\,0,\,0,\,1\right)
 \\
\left(0,\,0,\,0,\,0,\,0,\,1,\,0,\,0,\,1,\,0,\,1,\,1,\,1,\,1,\,1,\,1,\,0,\,1\right)
 &
\left(0,\,0,\,0,\,0,\,0,\,1,\,0,\,0,\,1,\,0,\,1,\,1,\,1,\,0,\,1,\,1,\,1,\,1\right)
 \\
\left(0,\,0,\,0,\,0,\,0,\,1,\,0,\,1,\,1,\,0,\,1,\,1,\,1,\,1,\,1,\,0,\,0,\,1\right)
 &
\left(0,\,0,\,1,\,0,\,0,\,0,\,0,\,0,\,1,\,0,\,1,\,1,\,1,\,1,\,1,\,1,\,0,\,1\right)
 \\
\left(0,\,1,\,0,\,0,\,0,\,1,\,0,\,0,\,1,\,0,\,1,\,0,\,1,\,1,\,1,\,1,\,0,\,1\right)
 &
\left(0,\,1,\,1,\,0,\,0,\,0,\,0,\,1,\,1,\,0,\,1,\,0,\,1,\,0,\,1,\,0,\,1,\,1\right)
 \\
\left(0,\,1,\,0,\,0,\,0,\,1,\,0,\,1,\,1,\,0,\,1,\,0,\,1,\,0,\,1,\,0,\,1,\,1\right)
 &
\left(0,\,0,\,1,\,0,\,0,\,0,\,0,\,1,\,1,\,0,\,1,\,1,\,1,\,0,\,1,\,0,\,1,\,1\right)
 \\
\left(0,\,0,\,0,\,0,\,0,\,1,\,0,\,1,\,1,\,0,\,1,\,1,\,1,\,0,\,1,\,0,\,1,\,1\right)
 &
\left(0,\,0,\,1,\,0,\,0,\,0,\,0,\,0,\,1,\,0,\,1,\,1,\,1,\,0,\,1,\,1,\,1,\,1\right)
 \\
\left(0,\,0,\,1,\,0,\,1,\,0,\,0,\,0,\,1,\,0,\,0,\,1,\,1,\,0,\,1,\,1,\,1,\,1\right)
 &
\left(0,\,0,\,0,\,0,\,1,\,1,\,0,\,1,\,1,\,0,\,0,\,1,\,1,\,0,\,1,\,0,\,1,\,1\right)
 \\
\left(0,\,0,\,1,\,0,\,1,\,0,\,0,\,1,\,1,\,0,\,0,\,1,\,1,\,0,\,1,\,0,\,1,\,1\right)
 &
\left(0,\,1,\,0,\,0,\,1,\,1,\,0,\,1,\,1,\,0,\,0,\,0,\,1,\,0,\,1,\,0,\,1,\,1\right)
 \\
\left(0,\,1,\,1,\,0,\,1,\,0,\,0,\,1,\,1,\,0,\,0,\,0,\,1,\,0,\,1,\,0,\,1,\,1\right)
 &
\left(0,\,1,\,0,\,0,\,1,\,1,\,0,\,0,\,1,\,0,\,0,\,0,\,1,\,1,\,1,\,1,\,0,\,1\right)
 \\
\left(0,\,0,\,1,\,0,\,1,\,0,\,0,\,0,\,1,\,0,\,0,\,1,\,1,\,1,\,1,\,1,\,0,\,1\right)
 &
\left(0,\,0,\,0,\,0,\,1,\,1,\,0,\,1,\,1,\,0,\,0,\,1,\,1,\,1,\,1,\,0,\,0,\,1\right)
 \\
\left(0,\,0,\,0,\,0,\,1,\,1,\,0,\,0,\,1,\,0,\,0,\,1,\,1,\,0,\,1,\,1,\,1,\,1\right)
 &
\left(0,\,0,\,0,\,0,\,1,\,1,\,0,\,0,\,1,\,0,\,0,\,1,\,1,\,1,\,1,\,1,\,0,\,1\right)
 \\
\left(0,\,0,\,1,\,0,\,1,\,0,\,0,\,1,\,1,\,0,\,0,\,1,\,1,\,1,\,1,\,0,\,0,\,1\right)
 &
\left(0,\,1,\,0,\,0,\,1,\,1,\,0,\,1,\,1,\,0,\,0,\,0,\,1,\,1,\,1,\,0,\,0,\,1\right)
 \\
\left(0,\,1,\,1,\,0,\,1,\,0,\,0,\,0,\,1,\,0,\,0,\,0,\,1,\,1,\,1,\,1,\,0,\,1\right)
 &
\left(0,\,1,\,0,\,0,\,1,\,1,\,0,\,0,\,1,\,0,\,0,\,0,\,1,\,0,\,1,\,1,\,1,\,1\right)
 \\
\left(0,\,1,\,1,\,0,\,1,\,0,\,0,\,1,\,1,\,0,\,0,\,0,\,1,\,1,\,1,\,0,\,0,\,1\right)
 &
\left(0,\,1,\,1,\,0,\,1,\,0,\,0,\,0,\,1,\,0,\,0,\,0,\,1,\,0,\,1,\,1,\,1,\,1\right)
 \\
\left(0,\,0,\,1,\,0,\,0,\,0,\,1,\,0,\,1,\,0,\,1,\,1,\,0,\,0,\,1,\,1,\,1,\,1\right)
 &
\left(0,\,0,\,0,\,0,\,0,\,1,\,1,\,1,\,1,\,0,\,1,\,1,\,0,\,0,\,1,\,0,\,1,\,1\right)
 \\
\left(0,\,0,\,1,\,0,\,0,\,0,\,1,\,1,\,1,\,0,\,1,\,1,\,0,\,0,\,1,\,0,\,1,\,1\right)
 &
\left(0,\,1,\,0,\,0,\,0,\,1,\,1,\,1,\,1,\,0,\,1,\,0,\,0,\,0,\,1,\,0,\,1,\,1\right)
 \\
\left(0,\,1,\,1,\,0,\,0,\,0,\,1,\,1,\,1,\,0,\,1,\,0,\,0,\,0,\,1,\,0,\,1,\,1\right)
 &
\left(0,\,1,\,0,\,0,\,0,\,1,\,1,\,0,\,1,\,0,\,1,\,0,\,0,\,1,\,1,\,1,\,0,\,1\right)
 \\
\left(0,\,0,\,1,\,0,\,0,\,0,\,1,\,0,\,1,\,0,\,1,\,1,\,0,\,1,\,1,\,1,\,0,\,1\right)
 &
\left(0,\,0,\,0,\,0,\,0,\,1,\,1,\,1,\,1,\,0,\,1,\,1,\,0,\,1,\,1,\,0,\,0,\,1\right)
 \\
\left(0,\,0,\,0,\,0,\,0,\,1,\,1,\,0,\,1,\,0,\,1,\,1,\,0,\,0,\,1,\,1,\,1,\,1\right)
 &
\left(0,\,0,\,0,\,0,\,0,\,1,\,1,\,0,\,1,\,0,\,1,\,1,\,0,\,1,\,1,\,1,\,0,\,1\right)
 \\
\left(0,\,0,\,1,\,0,\,0,\,0,\,1,\,1,\,1,\,0,\,1,\,1,\,0,\,1,\,1,\,0,\,0,\,1\right)
 &
\left(0,\,1,\,0,\,0,\,0,\,1,\,1,\,1,\,1,\,0,\,1,\,0,\,0,\,1,\,1,\,0,\,0,\,1\right)
 \\
\left(0,\,1,\,1,\,0,\,0,\,0,\,1,\,0,\,1,\,0,\,1,\,0,\,0,\,1,\,1,\,1,\,0,\,1\right)
 &
\left(0,\,1,\,0,\,0,\,0,\,1,\,1,\,0,\,1,\,0,\,1,\,0,\,0,\,0,\,1,\,1,\,1,\,1\right)
 \\
\left(0,\,1,\,1,\,0,\,0,\,0,\,1,\,1,\,1,\,0,\,1,\,0,\,0,\,1,\,1,\,0,\,0,\,1\right)
 &
\left(0,\,1,\,1,\,0,\,0,\,0,\,1,\,0,\,1,\,0,\,1,\,0,\,0,\,0,\,1,\,1,\,1,\,1\right)
 \\
\left(0,\,0,\,1,\,0,\,1,\,0,\,1,\,0,\,1,\,0,\,0,\,1,\,0,\,0,\,1,\,1,\,1,\,1\right)
 &
\left(0,\,0,\,0,\,0,\,1,\,1,\,1,\,1,\,1,\,0,\,0,\,1,\,0,\,0,\,1,\,0,\,1,\,1\right)
 \\
\left(0,\,0,\,1,\,0,\,1,\,0,\,1,\,1,\,1,\,0,\,0,\,1,\,0,\,0,\,1,\,0,\,1,\,1\right)
 &
\left(0,\,1,\,0,\,0,\,1,\,1,\,1,\,1,\,1,\,0,\,0,\,0,\,0,\,0,\,1,\,0,\,1,\,1\right)
 \\
\left(0,\,1,\,1,\,0,\,1,\,0,\,1,\,1,\,1,\,0,\,0,\,0,\,0,\,0,\,1,\,0,\,1,\,1\right)
 &
\left(0,\,1,\,0,\,0,\,1,\,1,\,1,\,0,\,1,\,0,\,0,\,0,\,0,\,1,\,1,\,1,\,0,\,1\right)
 \\
\left(0,\,0,\,1,\,0,\,1,\,0,\,1,\,0,\,1,\,0,\,0,\,1,\,0,\,1,\,1,\,1,\,0,\,1\right)
 &
\left(0,\,0,\,0,\,0,\,1,\,1,\,1,\,1,\,1,\,0,\,0,\,1,\,0,\,1,\,1,\,0,\,0,\,1\right)
 \\
\left(0,\,0,\,0,\,0,\,1,\,1,\,1,\,0,\,1,\,0,\,0,\,1,\,0,\,0,\,1,\,1,\,1,\,1\right)
 &
\left(0,\,0,\,0,\,0,\,1,\,1,\,1,\,0,\,1,\,0,\,0,\,1,\,0,\,1,\,1,\,1,\,0,\,1\right)
 \\
\left(0,\,0,\,1,\,0,\,1,\,0,\,1,\,1,\,1,\,0,\,0,\,1,\,0,\,1,\,1,\,0,\,0,\,1\right)
 &
\left(0,\,1,\,0,\,0,\,1,\,1,\,1,\,1,\,1,\,0,\,0,\,0,\,0,\,1,\,1,\,0,\,0,\,1\right)
 \\
\left(0,\,1,\,1,\,0,\,1,\,0,\,1,\,0,\,1,\,0,\,0,\,0,\,0,\,1,\,1,\,1,\,0,\,1\right)
 &
\left(0,\,1,\,0,\,0,\,1,\,1,\,1,\,0,\,1,\,0,\,0,\,0,\,0,\,0,\,1,\,1,\,1,\,1\right)
 \\
\left(0,\,1,\,1,\,0,\,1,\,0,\,1,\,1,\,1,\,0,\,0,\,0,\,0,\,1,\,1,\,0,\,0,\,1\right)
 &
\left(0,\,1,\,1,\,0,\,1,\,0,\,1,\,0,\,1,\,0,\,0,\,0,\,0,\,0,\,1,\,1,\,1,\,1\right)
 \\
\left(0,\,0,\,1,\,0,\,0,\,0,\,0,\,0,\,0,\,1,\,1,\,1,\,1,\,0,\,1,\,1,\,1,\,1\right)
 &
\left(0,\,0,\,1,\,0,\,0,\,0,\,0,\,0,\,0,\,1,\,1,\,1,\,1,\,1,\,1,\,1,\,0,\,1\right)
 \\
\left(0,\,0,\,0,\,0,\,0,\,1,\,0,\,0,\,0,\,1,\,1,\,1,\,1,\,1,\,1,\,1,\,0,\,1\right)
 &
\left(0,\,0,\,0,\,0,\,0,\,1,\,0,\,0,\,0,\,1,\,1,\,1,\,1,\,0,\,1,\,1,\,1,\,1\right)
 \\
\left(0,\,0,\,0,\,1,\,1,\,1,\,1,\,0,\,1,\,0,\,0,\,1,\,0,\,0,\,0,\,1,\,1,\,1\right)
 &
\left(0,\,1,\,0,\,1,\,1,\,1,\,1,\,0,\,1,\,0,\,0,\,0,\,0,\,0,\,0,\,1,\,1,\,1\right)
 \\
\left(0,\,0,\,0,\,1,\,1,\,1,\,1,\,1,\,1,\,0,\,0,\,1,\,0,\,0,\,0,\,0,\,1,\,1\right)
 &
\left(0,\,1,\,0,\,1,\,1,\,1,\,1,\,1,\,1,\,0,\,0,\,0,\,0,\,0,\,0,\,0,\,1,\,1\right)
 \\
\left(1,\,1,\,1,\,0,\,1,\,0,\,0,\,1,\,1,\,0,\,0,\,0,\,1,\,1,\,1,\,0,\,0,\,0\right)
 &
\left(1,\,1,\,1,\,0,\,0,\,0,\,1,\,1,\,1,\,0,\,1,\,0,\,0,\,1,\,1,\,0,\,0,\,0\right)
 \\
\left(1,\,1,\,1,\,0,\,1,\,0,\,1,\,1,\,1,\,0,\,0,\,0,\,0,\,1,\,1,\,0,\,0,\,0\right)
 &
\left(\frac{1}{2},\,\frac{1}{2},\,\frac{1}{2},\,\frac{1}{2},\,\frac{1}{2},\,\frac{1}{2},\,\frac{1}{2},\,\frac{1}{2},\,\frac{1}{2},\,\frac{1}{2},\,\frac{1}{2},\,\frac{1}{2},\,\frac{1}{2},\,\frac{1}{2},\,\frac{1}{2},\,\frac{1}{2},\,\frac{1}{2},\,\frac{1}{2}\right)
 \\
\left(1,\,1,\,1,\,0,\,0,\,0,\,0,\,1,\,1,\,0,\,1,\,0,\,1,\,1,\,1,\,0,\,0,\,0\right)
 &
\end{array}\end{displaymath}\end{tiny}

We see that \(\prpolytope{C_{2} \times C_{2} \times C_{2}}\)
has exactly one vertex that does not have 0/1-coordinates, namely \(\frac{1}{2} \mathbf{1}\),
which corresponds to the probability function \(\probnull{}\).
So, \(\prpolytope{C_{2} \times C_{2} \times C_{2}}\) is not a 0/1-polytope, and not a lattice polytope.
\label{C2C2C2-vertices}
\end{mexample}
\section{Summary and directions for further investigations}
\label{sec:org2b04fb3}
In their article, Kim, Kim, and Neggers \autocite{neggers2019probfunc} defined
the notion of prabability functions on posets that we have studied here.
They also introduced some families of probability functions on posets
embedded in the ``ordered plane'', that is to say \(\RR^{2}\) with the product order
\[(x,y) \le (u,v) \quad \iff \quad (x \le u) \, \wedge \, (y \le v).\]
Any poset (not only finite posets) of \emph{order-dimension} at most two can be so embedded, see for instance
\autocite{Trotter:Poset}. Indeed, the constructions by Kim, Kim, and Neggers yield
probability functions on  any induced subposet of the ``ordered plane''. Specializing to finite posets,
the probability functions so obtained depend on the embedding. It would be interesting to
study what subsets of the probability functions polytope \(\prpolytope{P}\) that their probability functions cut out,
for a given finite poset \(P\), as the embedding and the parameters of their construction vary.

The authors reference an earlier work \autocite{neggers2011trends} where probability functions
on certain algebras where defined, once again by inequalities that might be amenable to polyhedral methods.
We have note explored this.

It is natural to ask if probability functions polytopes of ``easy'' classes of finite posets, such as chain products,
boolean lattices, and the like, may be described explicitly. On related note,
if \(P,Q\) are finite posets, and \(\odot\) some natural binary operation on finite posets, how is
\(\prpolytope{P \odot Q}\) related to \(\prpolytope{P}\) and \(\prpolytope{Q}\)? We have answered this
question for the trivial case of ordinal sums, but there are many more options to explore.
The order polytope and the chain polytope enjoys many nice properties in this regard \autocite{Freij-Hollanti_Lundström_2024}.

We have to date found one (1) finite poset \(P\) such that \(\prpolytope{P}\) is not a 0/1-polytope.
It would indeed be interesting to find more.

The linear extensions polytope can be realized inside the probability functions polytope and our intuition is that they should
be ``close'', at least for posets with few elements. There are various variants and relaxations of this
polytope that have been studied in the literature, and the general problem of describing for instance
the vertices of said polytope is possibly intractable; still, it would indeed be interesting to
study how the linear extensions polytope ``sits inside'' the probability functions polytope for some small examples.
\section{The polytope of probability functions for posets with three or four elements.}
\label{sec:org862a568}
We show the poset \(P\) and  probability functions poset \(\twoanti{P}\), as well as the
edge-graph and the \(f\)-vector
of the probability functions polytope \(\prpolytope{P}\),
for \(P\) with three or four elements, excepting the chains and the antichains.

\phantomsection
\label{n-posets-poly}
\begin{center}
\end{center}
\section{Bibliography}
\label{sec:org3aa5880}
\printbibliography
\end{document}